\numberwithin{equation}{section}
\title{Convergence of the non-staggered Nessyahu-Tadmor scheme for coupled systems of one-dimensional nonlocal balance laws}
\author{%
Sanjibanee Sudha\footnotemark[1],
 Jan Friedrich\footnotemark[2], and Samala Rathan\footnotemark[1] 
 }
\renewcommand{\rv}[1]{\textcolor{black}{#1}}
\begin{document}

\footnotetext[2]{During the time of this work: RWTH Aachen University, Institute of Applied Mathematics, 52064 Aachen, Germany; now: Chair of Optimal Control, Department for Mathematics, School of Computation, Information and Technology, Technical University of Munich, Boltzmannstraße 3, 85748 Garching b. Munich, Germany, jan.friedrich@cit.tum.de}
\footnotetext[1]{Department of Humanities and Sciences, Indian Institute of Petroleum and Energy, Visakhapatnam, Andhra Pradesh, India-530003 (\{sudhamath21,rathans.math\}@iipe.ac.in)}
\maketitle

\begin{abstract}
\justify
We derive a second-order accurate, non-staggered central scheme based on the well-known Nessyahu-Tadmor scheme to approximate solutions of coupled systems of nonlocal balance laws.
We show that the approximate solutions stay bounded by an exponential $L^\infty$ bound in time.
Under linearity assumptions on the flux and source terms the approximate solutions converge weakly-$*$ to weak solutions of the nonlocal balance laws.
Assuming stronger regularity, in particular on the convolution kernel, we show strong convergence towards entropy weak solutions in the nonlinear case.
Numerical examples validate our \rv{results} and demonstrate its applicability to various systems of nonlocal problems.
\end{abstract}
\medskip
\noindent \textbf{Keywords:} System of nonlocal balance laws, higher-order schemes, finite-volume schemes, central schemes, non-staggered grid, traffic flow, \rv{nonlocal conservation laws}
\medskip

\section{Introduction}
Systems of balance laws are of great importance in the modeling and comprehension of complex physical phenomena, as they are capable of describing the interaction of multiple quantities that evolve over time.
In recent years, there has been a growing interest in nonlocal extensions of well-known hyperbolic balance laws, e.g. \cite{aggarwal2015nonlocal,aggarwal2024well,friedrich2020nonlocal,bayen2022multilane,BlandinGoatin2016,chiarello2018global,keimer2018multi,KeimerPflug2017,friedrich2018godunov,friedrich2020micromacro,betancourt2011nonlocal,amadori2012integro,gottlich2014modeling,bhatnagar2021well} and the references therein.
We also refer to \cite{KEIMER2023} for an overview on recent results.
Similar to \cite{aggarwal2015nonlocal}, a generalized form of a system of $N$ nonlocal balance laws in one spatial dimension can be expressed as follows
\begin{align}\label{eq:generalsystem}
\partial_t\brho(t,x)+\partial_xF(\brho,\omega*\brho)\rv{(t,x)}&=S(\brho,\omega*\brho)\rv{(t,x)},\\
\brho(t,0)&=\brho_0(x),\label{eq:initcond}
\end{align}
with $t\in\mathbb{R}_{+},\, x\in\mathbb{R},\, \brho(t,x)\in\mathbb{R}^{N},\, \omega: \R\to \R^{m\times N}, \,(\omega\ast \brho) (t,x) \in\mathbb{R}^{m},\,F:\R^N\times \R^m\to \mathbb{R}^{N},\, S:\R^N\times \R^m\to \R^N,$ suitable initial data $\brho_0:\R\to \R^N$.
\rv{
In this work, we assume that the system \eqref{eq:generalsystem} is only coupled through the nonlocal term or the source, i.e., for $k=1,\dots,N$ the equation \eqref{eq:generalsystem} can be expressed as
\begin{align}\label{eq:generalsystemsimple}
    \partial_t \rho^k(t,x)+\partial_x F_k(\rho^k,\omega \ast \brho)(t,x)=S_k(\brho,\omega \ast\brho)(t,x),
\end{align}
with $F_k:\R\times \R^m\to \R$ and $S_k: \R\times \R^m\to \R$.}
The nonlocal term $\omega\ast\brho,$ \rv{\eqref{eq:generalsystem} and \eqref{eq:generalsystemsimple}, respectively, is for $\ell=1,\dots,m$} given by, 
\begin{align}\label{eq:Rl}
     \rv{R^\ell(t,x)}\coloneq (\omega\ast\brho)_{\ell}(t,x)=\sum_{k=1}^{N}\underbrace{\int_{\mathbb{R}}\omega^{\ell,k}(\rv{y-x})\rho^{k}(t,y)dy}_{\eqcolon R^{\ell,k}(t,x)},\quad \rv{ (t,x)\in \mathbb{R}_{+}\times \mathbb{R}.}
\end{align}
Here, \rv{$\omega$ is a suitable kernel function.} We use boldface variables for vector expressions and to shorten the notation we define $\bm{R}(t,x)=(\omega \ast \brho)(t,x)$.  
It should be noted that \rv{the study of existence and uniqueness for equation \eqref{eq:generalsystem} is, as in the local case, a challenging task. Unlike the local case, this area has not yet been extensively explored.}
Nevertheless, for the case of a system of nonlocal conservation laws and a coupling of those similar to \eqref{eq:generalsystemsimple}, rather general results can be found in \cite{aggarwal2015nonlocal,aggarwal2024well} and results for specific flux functions in \cite{friedrich2020nonlocal,bayen2022multilane,bhatnagar2021well,friedrich2020micromacro}.
Similar, to the local setting the scalar case $N=1$ is well understood, see e.g. \cite{chiarello2018global,KeimerPflug2017,amorim2015numerical}.
Especially in the scalar case, many applications can be modeled by \eqref{eq:generalsystem}, such as sedimentation processes \cite{betancourt2011nonlocal}, granular material dynamics \cite{amadori2012integro}, supply chains \cite{goettlich2010supplychains,colombo2011control}, material flow on a conveyor belt (two spatial dimensions) \cite{gottlich2014modeling}, crowd dynamics (two spatial dimensions) \cite{colombo2012class, colombo2011control}, and particularly traffic flows \cite{BlandinGoatin2016, friedrich2018godunov, chiarello2018global}.
Examples of systems with a nonlocal term in the flux as in \eqref{eq:generalsystem} include traffic flow models with a second equation describing the momentum \cite{friedrich2020micromacro}, multiclass models \cite{chiarello2019multiclass}, or multilane models \cite{friedrich2020nonlocal,bayen2022multilane}, multi-population crowd dynamics \cite{colombo2012nonlocal}, a nonlocal extension of the Keyfitz-Kranzer model \cite{aggarwal2015nonlocal,aggarwal2024well}, or more recently, pressureless and nonlocal Euler equations with relaxation \cite{bhatnagar2021well}. \par 
Numerical schemes for nonlocal conservations are typically based on ideas for schemes used to approximate local hyperbolic balance laws.
Especially for the scalar case, first-order methods are reliable, robust and well understood, see e.g.  \cite{amorim2015numerical,chiarello2018global,friedrich2023numerical, huang2024asymptotic}.
Similar to the local schemes, finite-volume methods are considered, in particular Godunov-type schemes (upwind method) \cite{godunov1959}, and Lax-Friedrichs schemes (LxF) (central scheme) are widely used for nonlocal conservation laws.
Note that in contrast to the local setting no Riemann solvers are known, \rv{since due to the nonlocality the Riemann problem can generally not be solved explicitly. However,} for certain structures of the flux Godunov/upwind-type schemes can still be derived, e.g. \cite{friedrich2018godunov,friedrich2023numerical}. Central schemes avoid this by integrating over space-time control volumes where each Riemann fan is contained within its own volume.
This results in a more dissipative behavior, but their advantage is the simplicity and direct applicability to different flux functions, such as those given by the system \eqref{eq:generalsystem}.
In contrast, Godunov methods are quite difficult to derive for \eqref{eq:generalsystem}.
However, for special structures of the flux, e.g. similar to \eqref{eq:generalsystemsimple}, they can be derived in a straightforward way  \cite{aggarwal2024well,friedrich2020nonlocal,chiarello2019multiclass}.
\par
Apart from first-order schemes, higher-order schemes for nonlocal scalar conservation laws have been developed in \cite{ChalonsGoatinVillada2018, friedrich2019maximum, gd2023convergence}. \rv{In \cite{manoj2024positivity}, a second-order MUSCL-type scheme with Runge-Kutta time integration and LxF flux for systems of nonlocal conservation laws has been investigated.}
All these works consider a higher-order reconstruction within each cell and an appropriate flux function at the cell interface.
Furthermore, high-resolution schemes based on the ideas of central schemes have been considered in \cite{GoatinScialanga2016, belkadi2022non, belkadi2024unstaggered,kurganov2009non}.
These ideas go back to the central differencing high resolution scheme developed by Nessyahu and Tadmor in \cite{nessyahu1990non}.
To reduce the dissipative behavior of first-order central schemes they used MUSCL-type interpolants and maintained the simplicity of the Riemann-solver free approach by evolving the cell-averages over staggered cells. 
The resulting and so-called central Nessyahu-Tadmor (NT) scheme is a second-order accurate method.
The staggered grids can also be transformed into a non-staggered grid by following \cite{jiang1998high}.
The main property of non-staggered schemes is further simplicity since they avoid the need to alternate between two staggered grids, which is particularly challenging near the boundaries. 
\rv{For scalar nonlocal conservation laws the NT scheme was initially proposed in \cite{goatin2016speed}. This idea was later extended in \cite{belkadi2024unstaggered, belkadi2022non, belkadi2024class, belkadi2025unstaggered}, where the focus was primarily on computational investigations and numerical performance. Nevertheless, a rigorous theoretical framework for analyzing the NT scheme even in the context of scalar nonlocal conservation laws has not yet been fully established.}
\par\rv{Hence, the aim of this work is twofold: First, we develop the NT scheme using non-staggered grids for a general nonlocal system such as \eqref{eq:generalsystemsimple} and second, we establish theoretical convergence results for this scheme. 
From this we also obtain existence results for \eqref{eq:generalsystemsimple}.
Therefore, in Section \ref{sec:staggered central}, we present our main results and extend the NT schemes considered in \cite{GoatinScialanga2016, belkadi2022non} to systems of nonlocal balance laws. Thereby, we use the ideas of \cite{jiang1998high} to consider them on a non-staggered grid. 
Additionally, we prove an $L^\infty$ bound of the NT scheme under minimal assumption.
In Section \ref{weakconvergence}, we prove the weak-$*$ convergence of approximate solutions towards weak solutions for a linear setting.
Then in Section \ref{sec:BV}, we use stricter regularity assumptions to prove the convergence towards weak entropy solutions in the nonlinear case. Finally, Section \ref{sec:numerical} presents several numerical examples that demonstrate the second-order accuracy of the considered schemes and their applicability to various modeling equations.}
\section{Second-order central scheme} \label{sec:staggered central}
\rv{In this section, we present the main results along with the necessary assumptions ensuring the convergence of the NT scheme and the existence of solutions to \eqref{eq:generalsystemsimple}. Then, we describe the adapted NT numerical scheme developed for nonlocal balance laws in detail and derive the $L^\infty$ bound, which holds under rather general assumptions.}

\subsection{\rv{Main results}}
We study the system \eqref{eq:generalsystemsimple} which assumes that the system is only coupled through the nonlocal term or the source. 
Several relevant applications can already be modeled by \eqref{eq:generalsystemsimple}, see also Section \ref{sec:numerical}.
Nevertheless, we note that the derivation of the NT scheme in Section \ref{subsec1} can also be applied to the balance law \eqref{eq:generalsystem}, but it is more challenging to derive a CFL condition and further analytical results.
In addition, we are interested in the convergence of the NT scheme which will provide existence results for \eqref{eq:generalsystemsimple}, too. 
Studying the uniqueness of solutions is beyond the scope of this work.\\
The following set of assumptions will ensure an $L^\infty$ bound for sufficiently small time horizons, cf. Section \ref{sec:posinf}.
\begin{assumption}\label{set1}
    For $k\in\{1,\dots,N\}$, we consider the following assumptions on the input data
    \begin{itemize}
    \item \emph{Flux function}: there exist  $\rho_M>\rho_m\geq 0$ such that for all $\bm{R}\in\R^{m}$ it holds $F_k(\rho_m,\bm{R})=0$, $F_k\in \text{Lip}([\rho_m,\rho_M]^{m+1})$ with Lipschitz constant $L_F$,
    \item \emph{Source term}: $S_k(\brho_m,\bm{R})=0$ for all $\bm{R}\in \R^m$ with $\brho_m=(\rho_m,\dots,\rho_m)^T$, $S_k\in \text{Lip}([\rho_m,\rho_M]^{N+m})$  with Lipschitz constant $L_S$,
    \item \emph{Initial data}: $\brho_0 \in L^{\infty}(\R;[\rho_m,\rho_M)^N)$,
    \item \emph{Kernel function}: for $\ell=1,\dots,m$, $\omega^{\ell,k}\in L^1\cap \text{BV}(\mathbb{R};\mathbb{R}_+)$, $0\in \text{supp}(\omega^{\ell,k})$ and $\int_\R \omega^{\ell,k}(x)dx=1$.
    \end{itemize}
\end{assumption}
For simplicity, we assumed the existence of $\rho_m<\rho_M$, which are uniform over all $k\in \{1,\dots,N\}$ and a uniform integral of each kernel. We note that the analysis below can easily be extended to the case that $\rho_m$ and $\rho_M$ are different for each $k\in \{1,\dots,N\}$ or $L^1-$kernels which do not have a unit integral. Furthermore, the assumption that zero is included in the kernel's support is only made to simplify the numerical discretization.\\
Here, solutions need to be understood in a weak sense. In particular, weak solutions of \eqref{eq:generalsystemsimple} are defined similarly to \cite[Definition 1]{KEIMER2023} and \cite[Definition 5.2]{eymard2000finitevolume}.
\begin{definition}\label{def:weak}
A function $\brho\in
L^\infty((0,T)\times \R;\R^N)$ is called a weak solution to \eqref{eq:generalsystemsimple} with $\brho_0\in L^\infty(\R;\R^N)$, if for $k=1,\dots,N$
\begin{align*}\int_{\R}\rho^k_0(x)\phi(0,x)dx&+\int_{0}^{T}\int_{\R}\rho^k(t,x)\phi_t(t,x)dxdt\\
&+\int_0^{T}\int_{\R}F_k(\rho^{k},\omega\ast \brho)\phi_x(t,x)dxdt+\int_0^{T}\int_{\mathbb{R}}\phi(t,x)S_k(\brho,\omega\ast\brho)dxdt=0\end{align*}
for all $\phi\in C_0^1((-42,T)\times\R;\R)$.
\end{definition}
To prove a first convergence result, we focus on the weak-$*$ convergence of an $L^\infty$ bounded sequence in a linear setting. 
To apply this, we need additional assumptions on the flux function and source term:
\begin{assumption}~\label{set2}
Additionally to the Assumption \ref{set1}, we assume for $k\in\{1,\dots,N\}$
    \begin{itemize}
    \item \emph{Flux function}:  $F_k(\rho^k,\bm{R})=\rho^k V_k(\bm{R})$ with $V_k\in \text{Lip}([\rho_m,\rho_M]^{m})$,
    \item \emph{Source term}: $\partial_{\rho^j}S_k(\brho, \bm{R})=\text{const.}$ for $j=1,\dots,N$, i.e., $S_k$ is a linear function in each $\rho^j$.
\end{itemize}
\end{assumption}
This linearity in the source term and flux function allows us to obtain the following convergence result for the NT scheme constructed in Section \ref{subsec1}.
\begin{theorem}\label{thm:weakconv}
Let the Assumption \ref{set2} hold and let $\brho_\Delta $ be the sequence of approximate solutions generated by the NT scheme with the slopes \eqref{eq:slopesmodconv} satisfying the CFL conditions \eqref{eq:CFLmaxNT}.   
Then, as $\Delta x, \Delta t \to 0$, there exists a sufficiently small time $T^*\in (0,T]$ and a function $\brho \in L^{\infty}((0,T^*)\times\R)$ such that, up to a subsequence,
\[
\brho_\Delta \rightarrow \brho \quad \text{weakly-\(*\) in}\,\,L^{\infty}((0,T^*)\times\R\bigr).
\]
Moreover, $\rho$ is a weak solution of the nonlocal model \eqref{eq:generalsystemsimple} in the sense of Definition \ref{def:weak}.
\end{theorem}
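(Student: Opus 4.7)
My plan is to combine the uniform $L^\infty$ bound established in Section~\ref{sec:posinf} with a strong-convergence argument for the convolution term. Since $\brho_\Delta$ is bounded uniformly in $\Delta x,\Delta t$ in $L^\infty((0,T^*)\times\R;\R^N)$, the Banach--Alaoglu theorem directly yields a subsequence converging weakly-$*$ to some $\brho\in L^\infty((0,T^*)\times\R;\R^N)$. The remaining task is to verify that this limit satisfies the weak formulation of Definition~\ref{def:weak}.

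The crucial observation is that the nonlinear quantities in \eqref{eq:generalsystemsimple} depend on $\brho$ only through the convolved argument $\bm{R}_\Delta=\omega\ast\brho_\Delta$. Because each $\omega^{\ell,k}\in L^1\cap\BV(\R;\R_+)$, for every fixed $(t,x)$ the map $\brho\mapsto(\omega\ast\brho)(t,x)$ is continuous with respect to weak-$*$ convergence, while the $\BV$-regularity of $\omega$ combined with the uniform $L^\infty$ bound provides equicontinuity in $x$; equicontinuity in $t$ follows from the discrete update together with the CFL condition \eqref{eq:CFLmaxNT}. Hence $\bm{R}_\Delta\to\bm{R}=\omega\ast\brho$ pointwise, and by dominated convergence strongly in $L^p_{loc}$ for any $p<\infty$. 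The Lipschitz continuity of $V_k$ yields $V_k(\bm{R}_\Delta)\to V_k(\bm{R})$ strongly in $L^p_{loc}$, and by Assumption~\ref{set2} the flux is $F_k(\rho^k,\bm{R})=\rho^k V_k(\bm{R})$ while $S_k$ is linear in each $\rho^j$ with coefficients depending only on $\bm{R}$. Products of the weakly-$*$ convergent factors $\rho^k_\Delta$ and the strongly convergent coefficients then converge weakly-$*$ to the expected limit.

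The next step is to test the NT update rule against a fixed $\phi\in C_0^1((-42,T^*)\times\R;\R)$ and perform discrete summation by parts in both time and space, identifying the resulting identity as a consistent discretisation of Definition~\ref{def:weak}. Taylor expansions of $\phi$ at the grid nodes give remainders of order $\OO(\Dx+\Dt)$ that vanish against the uniform $L^\infty$ bound; the piecewise linear reconstruction and the mid-point flux/source evaluations intrinsic to the NT scheme produce slope-dependent contributions pointwise bounded by $\Dx$ times the Lipschitz constants $L_F, L_S$, hence $\OO(\Dx)$ after summation over the compact support of $\phi$. Both the flux and source terms can then be rewritten, up to vanishing errors, as $\rho^k_\Delta \cdot V_k(\bm{R}_\Delta)$ and a $\rho^j_\Delta$-linear combination with $\bm{R}_\Delta$-dependent coefficients, which pass to the limit by the previous paragraph.

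I expect the principal obstacle to be the careful handling of the staggered-to-non-staggered averaging and of the slope-dependent corrections in the mid-point evaluations: one must check that the projection back to the non-staggered grid does not generate spurious terms in the limit, and that the discrete flux differences converge in the sense of distributions to $\partial_x(\rho^k V_k(\bm{R}))$ \emph{without} requiring strong $L^1$ compactness of $\brho_\Delta$ itself. This is exactly where the linearity in $\rho^k$ supplied by Assumption~\ref{set2} becomes essential: it decouples the weak-$*$ convergent factor from the strongly convergent nonlinear coefficient, making the distributional limit accessible with only the $L^\infty$ bound on $\brho_\Delta$ at hand.
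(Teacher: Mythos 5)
Your overall strategy coincides with the paper's: extract a weak-$*$ limit by Banach--Alaoglu from the $L^\infty$ bound, show that the discrete nonlocal term converges strongly by testing the weak-$*$ convergence against the $L^1$ kernel, and then run a Lax--Wendroff consistency argument in which the linearity of $F_k$ and $S_k$ in $\rho$ lets you pair the weakly-$*$ convergent factor $\rho_\Delta^k$ with the strongly convergent coefficient $V_k(\bm{R}_\Delta)$. That part is sound and is exactly how the paper handles the terms $\mathbb{B}_2$, $\mathbb{B}_3$ and $\mathbb{S}_1$, $\mathbb{S}_2$.

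There is, however, a genuine gap in your treatment of the slope-dependent terms. You assert that the reconstruction slopes and the mid-point corrections are ``pointwise bounded by $\Dx$ times the Lipschitz constants'', hence $\OO(\Dx)$ after summation. Under Assumption \ref{set2} the only a priori estimate available is the $L^\infty$ bound \eqref{eq:maxbound}; there is no BV or discrete Lipschitz bound on $\brho_\Delta$. Consequently the minmod property only gives $\Dx\,|s_j^{k,n}|\le|\rho_{j+1}^{k,n}-\rho_j^{k,n}|=\OO(1)$ and $\Dx\,|\sigma^k(\brho_j^n,\bm{R}_j^n)|\le L_F\bigl(|\rho_{j+1}^{k,n}-\rho_j^{k,n}|+\|\bm{R}_{j+1}^n-\bm{R}_j^n\|\bigr)=\OO(1)$. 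Summing a term such as $\sum_{n,j}\Dx\,\Dt\,\tfrac{\Dx}{16}(s_{j+1}^{k,n}+s_j^{k,n})\,\tfrac{\phi_{j+1}^{k,n}-\phi_j^{k,n}}{\Dx}$ over the $\OO\bigl((\Dx\,\Dt)^{-1}\bigr)$ cells in the support of $\phi$ then yields an $\OO(1)$ contribution that need not vanish; likewise the half-step correction $\tfrac{\Dt}{2}\sigma^k=\tfrac{\lambda}{2}\,\Dx\,\sigma^k$ is only $\OO(1)$, so the consistency of the mid-point flux and source evaluations is not established. This is precisely why the theorem hypothesis prescribes the \emph{modified} slopes \eqref{eq:slopesmodconv}, which cap $\Dx\,s_j^{k,n}$, $\Dx\,\sigma^k$ and $\Dx\,s_{j+1/2}^{k,n+1}$ by $C\Dx^{\delta}$ and thereby force all slope-dependent remainders (and the half-step deviations $\rho_j^{k,n+1/2}-\rho_j^{k,n}$) to be $\OO(\Dx^{\delta})\to 0$. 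Your proposal neither invokes this modification nor supplies a substitute estimate, so the consistency step as written does not close. A second, smaller omission: the leading dissipative part $\tfrac{1}{4}(\rho_j^{k,n}-\rho_{j+1}^{k,n})$ of the numerical flux is also only $\OO(1)$ pointwise and must be killed by a cancellation argument (two shifted sums converging weakly-$*$ to the same limit), not by a pointwise size estimate.
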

Let us now comment on this result and the assumptions made in Assumption \ref{set2}.
On the one hand the linearity assumptions allow us to apply the weak-$*$ convergence. On the other hand the linearity in the flux function usually guarantees the uniqueness of weak solutions for nonlocal balance laws.
For example it is well-known, that in the scalar case weak solutions are unique, and no additional entropy condition is required as discussed in \cite{keimer2018multi,gd2023convergence}. Therefore, the second-order scheme converges directly to the unique weak solution without imposing additional admissibility criteria. We expect a similar result to hold in the system case. In \cite{bayen2022multilane} a multilane traffic flow model is studied which fits into the setting of Assumption \ref{set2}, if we consider slightly stronger assumptions on the initial data, e.g. BV bounded initial data, cf, \cite[Assumption 2.2]{bayen2022multilane}. 
Again the uniqueness of weak solutions is proven in \cite{bayen2022multilane}, such that the numerical approximation of the NT scheme converges to the correct solution. 
In addition, we note that, the uniqueness of weak solutions $\brho$ implies that the whole sequence $\brho_{\Delta}$ converges (under the Assumption \ref{set2}).
Nevertheless, the uniqueness of weak solutions of nonlocal balance law as \eqref{eq:generalsystem} or \eqref{eq:generalsystemsimple} is still an open problem and out of scope for this work.\\
For the general case, i.e., the flux function is nonlinear in the local density, we do not expect the uniqueness of weak solutions.
Without a source term the existence and uniqueness of weak entropy solutions of \eqref{eq:generalsystemsimple} has been shown in, e.g., \cite{aggarwal2015nonlocal} and \cite{goatin2024well}, respectively.
In both works, the authors assume additional (stronger) assumptions on the flux $F$ and consider more regular kernels.
Weak entropy solutions are thereby intended in the following sense, similar to \cite{goatin2024well} and \cite{eymard2000finitevolume}.
\begin{definition}\label{def:weakentropy}
    A function $\brho:[0,T]\times \R\to\R^N$ with $\brho\in C([0,T];L^1(\R;\R^N))\cap L^\infty((0,T);BV(\R;\R^N))$ is a weak entropy solution to \eqref{eq:generalsystemsimple}, if it satisfies for all $k\in\{1,\dots,N\}$ and any constant $\zeta \in\R$
\begin{align*}
0\leq&\int_\R |\rho^k_0(x)-\zeta|\phi(0,x) dx +
   \int_{0}^T\int_\R |\rho^k-\zeta|
    \phi_t dx dt\\
    &+\int_{0}^T\int_\R\textnormal{sign}(\rho^k-\zeta)\Big((F_k(\rho^k,\omega\ast\brho)-F_k(\zeta,\omega\ast\brho)\phi_x
    -(\partial_x F_k(\zeta,\omega\ast\brho)+S_k(\brho,\omega\ast\brho))\phi \Big)dx dt 
\end{align*}
for all $\phi\in C_0^1([0,T)\times\R;\R_+)$.
\end{definition}
Stronger regularity assumptions, in particular on the kernel, allow us to compute the NT scheme slightly more accurate, such that the convergence result in the general and nonlinear case can be proven.
\begin{assumption}\label{set3}
Additionally to the Assumption \ref{set1}, we assume for $k\in\{1,\dots,N\}$
\begin{itemize}
    \item \emph{Flux function}: $F_k(\rho^k,\bm{R}) = g_k(\rho^k)V_k(\bm{R}),$ with $g_k\in \text{Lip}([\rho_m,\rho_M])$ with Lipschitz constant $L_g$ and $V_k \in C^2([\rho_m,\rho_M]^m;\R)$,
    \item \emph{Initial data}: $\brho_0\in L^{1}\cap BV(\R;[\rho_m,\rho_M)^N)$,
    \item \emph{Kernel function}: $\omega^{\ell,k}$ is of compact support with $\omega^{\ell,k}\in C^1(\text{supp}(\omega^{\ell,k}),\R_+)$.
\end{itemize}
\end{assumption}
Note that the Assumption \ref{set3} implies $\rho_m=0$. Now, we present our second convergence result.
\begin{theorem}\label{thm:L1locconv}
Let the Assumption \ref{set3} hold and let $\brho_\Delta $ be the sequence of approximate solutions generated by the NT scheme with the slopes \eqref{eq:slopesv2} satisfying the CFL condition \eqref{eq:CFLmaxNT}.   
Then, as $\Delta x, \Delta t \to 0$, there exists a sufficiently small time $T^*\in (0,T]$ and a function $\brho \in L^{\infty}\cap BV((0,T^*)\times\R)$ such that $\brho_\Delta$ converges, up to a subsequence, in $L^1_\textnormal{loc}$ to $\brho$ and $\brho$ is an entropy weak solution of the nonlocal model \eqref{eq:generalsystemsimple} in the sense of Definition \ref{def:weakentropy}.
\end{theorem}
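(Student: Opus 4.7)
The plan is a classical compactness-plus-entropy argument adapted to the nonlocal system. The uniform $L^\infty$ bound is already provided by Section \ref{sec:posinf}, so $\brho_\Delta$ stays in a bounded box on $[0,T^*]$ for some $T^*>0$ independent of the mesh. The first real task is a uniform spatial BV estimate. Under Assumption \ref{set3} each kernel $\omega^{\ell,k}$ is compactly supported and $C^1$, so differentiating
\[
R^{\ell,k}(t,x)=\int_\R \omega^{\ell,k}(y-x)\rho^k(t,y)\,dy
\]
in $x$ transfers the derivative onto the kernel and gives a uniform $L^\infty$ bound on $\partial_x R^\ell(t,\cdot)$ depending only on $\|\brho_\Delta\|_\infty$ and $\|(\omega^{\ell,k})'\|_{L^1}$, \emph{independent} of $\TV(\brho_\Delta)$. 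Combined with the limited slopes \eqref{eq:slopesv2} and the CFL condition \eqref{eq:CFLmaxNT}, tracking the cell-interface increments produces a recursion of the form $\TV(\brho_\Delta^{n+1})\le(1+C\Delta t)\TV(\brho_\Delta^n)+C'\Delta t$, whence a discrete Gronwall inequality yields $\TV(\brho_\Delta(t,\cdot))\le Ce^{Ct}$ uniformly in $\Delta x$.

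Once the spatial BV bound is in hand, the scheme's update rule together with the Lipschitz continuity of $F_k$ and $S_k$ from Assumption \ref{set3} yields an $L^1$ Lipschitz-in-time estimate $\|\brho_\Delta(t+\tau,\cdot)-\brho_\Delta(t,\cdot)\|_{L^1}\le C\tau$ for $\tau\ge\Delta t$. Helly's (or Kolmogorov--Riesz) compactness theorem then extracts a subsequence of $\brho_\Delta$ converging in $L^1_{\text{loc}}((0,T^*)\times\R;\R^N)$ to some $\brho\in L^\infty\cap BV((0,T^*)\times\R)$. The regularity of $\omega$ immediately upgrades this to \emph{uniform} convergence on compacts of both $\omega\ast\brho_\Delta$ and $\partial_x(\omega\ast\brho_\Delta)$, which is crucial for passing to the limit in the nonlinear nonlocal flux.

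To identify the limit as an entropy weak solution, I would derive a discrete Kruzhkov inequality for the NT scheme in the spirit of \cite{nessyahu1990non} and the nonlocal adaptations of \cite{GoatinScialanga2016,belkadi2022non}. For each constant $\zeta\in\R$ I rewrite the update rule in terms of the entropy numerical flux; the CFL condition \eqref{eq:CFLmaxNT} guarantees non-negativity of the relevant convex-combination coefficients, while the second-order reconstruction contributes a remainder of order $\Delta x\cdot\TV(\brho_\Delta)$ that vanishes in the limit against any test function $\phi\in C_0^1([0,T)\times\R;\R_+)$. Summing the discrete inequality against $\phi$ and passing to the limit using the $L^1_{\text{loc}}$ convergence of $\brho_\Delta$ together with the uniform convergence of $\omega\ast\brho_\Delta$ and $\partial_x(\omega\ast\brho_\Delta)$, I recover the inequality of Definition \ref{def:weakentropy}. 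The additional term $\partial_xF_k(\zeta,\omega\ast\brho)\phi$ arises naturally from the nonlocal dependence of $F_k$, in the same way as in \cite{aggarwal2015nonlocal,goatin2024well}.

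The main obstacle is the uniform spatial BV estimate for the second-order scheme: the MUSCL slopes \eqref{eq:slopesv2} destroy the monotonicity structure that a first-order LxF scheme enjoys, and the nonlocal source couples all components $\rho^k$, so the standard Harten-type argument does not apply directly. My strategy is to decompose the increment between two neighboring cell-interface differences into a flux contribution, a source contribution, and a slope-correction contribution, bound the first two by $(1+C\Delta t)\TV(\brho_\Delta^n)$ using the Lipschitz constants $L_g$, $L_F$, $L_S$ and the $L^\infty$ bound on $\partial_x R^\ell$, and absorb the slope correction using the specific form of the limiter in \eqref{eq:slopesv2}. A secondary technical point is keeping the discrete entropy inequality tight enough at second order, which typically requires rewriting the NT numerical flux as a LxF flux plus a correction whose entropy contribution is controlled by $\Delta x\cdot\TV(\brho_\Delta)$ and hence vanishes in the limit.
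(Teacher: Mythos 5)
Your overall architecture is the same as the paper's: uniform $L^\infty$ bound, a spatial BV recursion of the form $\TV^{n+1}\le(1+C\Delta t)\TV^n$, an $L^1$ time-continuity estimate, Helly compactness, a discrete Kru\v{z}kov inequality obtained from monotonicity of the staggered numerical flux under the CFL condition \eqref{eq:CFLmaxNT}, and a Lax--Wendroff passage to the limit using the locally uniform convergence of $\omega\ast\brho_\Delta$ and $\partial_x(\omega\ast\brho_\Delta)$. All of that matches Propositions \ref{BV estiamte}--\ref{Prop:timecont}, Corollary \ref{cor:weak} and the discrete entropy inequality in Section \ref{sec:BV}.

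There is, however, one concrete step in your BV plan that would fail as written. You propose to bound the flux contribution to the interface increments ``using the Lipschitz constants and the $L^\infty$ bound on $\partial_x R^\ell$.'' The problematic piece is the purely nonlocal part of the flux difference, which after cancellation reads (schematically) $g_k(\rho_j)\bigl(V_k(\bm{R}_{j+1})-V_k(\bm{R}_j)\bigr)-g_k(\rho_{j-1})\bigl(V_k(\bm{R}_{j})-V_k(\bm{R}_{j-1})\bigr)$; this is the term \eqref{eq:lastterm} in the paper. The $L^\infty$ bound on $\partial_x R$ only gives $|V_k(\bm{R}_{j+1})-V_k(\bm{R}_j)|\le C\Delta x$ per cell, and summing $\lambda\cdot C\Delta x$ over the $O(1/\Delta x)$ cells in the support produces an $O(1)$ contribution per time step, destroying the $(1+C\Delta t)$ recursion. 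The cure, and the reason Assumption \ref{set3} demands $V_k\in C^2$ and $C^1$ kernels, is to rewrite this piece as $g_k(\rho_j)$ times a \emph{second-order} difference $V_k(\bm{R}_{j+1})-2V_k(\bm{R}_j)+V_k(\bm{R}_{j-1})$ plus $\bigl(g_k(\rho_j)-g_k(\rho_{j-1})\bigr)$ times a first-order difference; the second piece is $O(|\Delta\rho_{j-1}|\Delta x)$, while the first requires proving $\sum_j|R_{j+1}-2R_j+R_{j-1}|\le C\Delta x\sum_j|\Delta\rho_j|$ via summation by parts against the differentiable kernel (and similarly for the second differences of $R_t$). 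Your decomposition strategy accommodates this, but the specific estimate you name for that term is not sufficient, and without the second-difference/summation-by-parts argument the BV bound does not close.
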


\subsection{Non-staggered Nessyahu-Tadmor scheme}\label{subsec1}
To derive approximate solutions to \eqref{eq:generalsystem} and \eqref{eq:generalsystemsimple} we discretize space and time by an equidistant grid, where $\Delta x$ is the step size in space and $\Delta t$ is the step size in time. So, $t^n=n\Delta t$ with $n\in \mathbb{N}$ is the time grid and $x_{j}=j\Delta x$ with $j\in\Z$ the space grid. 
The cell interfaces are $\rv{x_{j-1/2}=(j-1/2)\Delta x}$ and $\rv{x_{j+1/2}=(j+1/2)\Delta x}$.\\
\rv{
Although the kernel functions might have infinite support, cf. Assumption \ref{set1}, we will discuss the numerical discretization for the case that each kernel $\omega^{\ell,k}:\R\to \R_+$ is of compact support.
In the latter case the boundaries of the support need to be approximated accurately. On the contrary, for kernels with infinite support, the discretization simplifies, as at least one of the boundary terms is not present.
Hence, we assume that the support of each kernel is given by the interval $[\eta_1^{\ell,k},\eta_2^{\ell,k}]$ with $\eta_1^{\ell,k}\leq 0 \leq \eta_2^{\ell,k},\ \eta_1^{\ell,k}\neq \eta_2^{\ell,k},$ for $k=1,\dots,N$ and $\ell=1,\dots,m$.
}
For simplicity, we assume that the size of the space step $\Delta x$ can be chosen such that for $\ell=1,\dots,m$ and $k=1,\dots,N$ every $N_1^{\ell,k}:=|\eta_1^{\ell,k}|/\Delta x$ and $N_2^{\ell,k}:=\eta_2^{\ell,k}/\Delta x$ are natural numbers or zero.
\begin{remark}
    We note that for the case where all kernels have the same support and are either solely forward/backward oriented or symmetric, the previous assumption holds.
    Otherwise, we need to set $N_1^{\ell,k}:=\left\lceil |\eta_1^{\ell,k}|/\Delta x \right\rceil$ and $N_2^{\ell,k}:=\left\lceil\eta_2^{\ell,k}/\Delta x\right\rceil$.
    This results in minor modifications when computing the nonlocal terms, which are detailed below.
\end{remark}
The initial data \eqref{eq:initcond} are approximated component-wise for $k=1,\dots,N$ and $j\in \mathbb{Z}$ by
$$\rho_{j}^{k,0}=\frac{1}{\Delta x}\int_{x_{j-1/2}}^{x_{j+1/2}}\rho^k_{0}(x)dx.$$
We assume that the cell-averages of the solution are given at a time $t^n$ by
$$\rho_{j}^k(\rv{t^n})=\frac{1}{\Delta x}\int_{x_{j-1/2}}^{x_{j+1/2}}\rho^k(t^n,x)dx.$$
To evolve the cell-averages to the next time level $t^{n+1}$, we start by \rv{constructing} a piecewise linear function in each cell, component-wise for $k=1,\dots,N$ by
\begin{equation}\label{eq:piecewise}
    \tilde{\rho}^{k}_{j}(t,x)={\rho}^{k}_{j}(t)+{s^{k}_{j}}(x-x_j) \,\,\text{for}\,\, x\in[x_{j-1/2},x_{j+1/2}).
\end{equation}
One way to compute the \rv{slope} in each component is to use the minmod limiter
\begin{equation}\label{eq:slope for X}
    s_{j}^{k}=\mm\bigg(\frac{\rho_{j}^{k}(t)-\rho_{j-1}^{k}(t)}{\Delta x},\frac{\rho_{j+1}^{k}(t)-\rho_{j}^{k}(t)}{\Delta x}\bigg),
\end{equation}
where the $\mm$ function is defined as
\begin{equation}
\mm(a, b) = 
\begin{cases} 
      a, & \text{if } |a| < |b| \text{ and } a \cdot b > 0, \\
      b, & \text{if } |b| < |a| \text{ and } a \cdot b > 0, \\
      0, & \text{if } a \cdot b \leq 0. 
\end{cases}
\end{equation}
The reconstruction \eqref{eq:piecewise} preserves the conservation (here the cell-average $\rho_j^k(t)$ in $[x_j,x_{j+1}]$).
Next, we evolve the cell-averages with the help of the piecewise linear reconstruction \eqref{eq:piecewise} by integrating \eqref{eq:generalsystem} over the domain $[t^n,t^{n+1})\times[x_j,x_{j+1})$ for each $k=1,\dots,N,$
    \begin{align}
 \rho_{j+1/2}^{k,n+1}=&\frac{1}{\Delta x}\int_{x_j}^{x_{j+1}}\tilde{\rho}^{k}(t^n,x)dx-\frac{1}{\Delta x}\int_{t^n}^{t^{n+1}}\bigg(F_k(\rho_{j+1}^k(t),\bm{R}(t,x_{j+1}))-F_k(\rho_{j}^k(t),\bm{R}(t,x_j))\bigg)dt\nonumber\\
    &+\frac{1}{\Delta x}\int_{t^n}^{t^{n+1}}\int_{x_j}^{x_{j+1}} S_k(\tilde \brho(t,x), \bm{R}(t,x)) dx dt, \nonumber\\ 
    =&\frac{1}{\Delta x}\bigg(\int_{x_j}^{x_{j+1/2}}\tilde{\rho}_j^{k}(t^n,x)dx+\int_{x_{j+1/2}}^{x_{j+1}}\tilde{\rho}_{j+1}^{k}(t^n,x)dx\bigg)-\frac{1}{\Delta x}\int_{t^n}^{t^{n+1}}\bigg(F_k(\rho_{j+1}^k(t),\bm{R}(t,x_{j+1}))\nonumber \\
    &-F_k(\rho_{j}^k(t),\bm{R}(t,x_j))\bigg)dt+\frac12\int_{t^n}^{t^{n+1}} \bigg(S_k(\brho_{j+1}(t), \bm{R}(t,x_{j+1}))+S_k(\brho_j(t), \bm{R}(t,x_{j})) \bigg)dt.\nonumber
    \end{align}
    \rv{Note that in the last step we used the trapezoidal rule in space to approximate the source term accurately.}
    Due to the finite speed of propagation we introduce the CFL condition similar to \rv{\cite{GoatinScialanga2016,belkadi2022non,nessyahu1990non}
    \begin{align}\label{eq:CFLstart}
        {\lambda \coloneq \frac{\Delta t}{\Delta x} \leq \frac{1}{2L_F}}.
    \end{align}
    Hence, we can approximate the temporal integrals of the flux and the source term by the mid point-rule to maintain the second-order accuracy}
\begin{equation}\label{eq:system predictor}
\begin{aligned}
\rho_{j+1/2}^{k,n+1}=&\frac{1}{2}(\rho_j^{k,n}+\rho_{j+1}^{k,n})+\frac{\Delta x}{8}(s_j^{k,n}-s_{j+1}^{k,n})-\rv{\lambda}\bigg[F_k(\rho_{j+1}^{k,n+1/2},\bm{R}_{j+1}^{n+1/2})-F_k(\rho_{j}^{k,n+1/2},\bm{R}_j^{n+1/2})\bigg]\\
 &+\frac{\Delta t}{2}\bigg[ S_k(\brho^{n+1/2}_{j+1}, \bm{R}^{n+1/2}_{j+1})+S_k(\brho^{n+1/2}_{j}, \bm{R}^{n+1/2}_{j})\bigg]\,.
\end{aligned}
\end{equation}
The source and flux terms use the same input, which avoids additional computational effort. 
It remains to approximate the values at the time $t^{n+1/2}$ \rv{of  \eqref{eq:system predictor}}.
Thanks to a Taylor series expansion and the balance law \eqref{eq:generalsystem}, we have
\begin{align}\label{eq:systemcorrector}
\rho_{j}^{k,n+1/2}&\approx \rho^{k}(t^n,x_j)+\frac{\Delta t}{2}\rho_{t}^{k}(t^n,x_j)\approx\rho_j^{k,n}+\frac{\Delta t}{2}S_k(\brho_j^{n},\bm{R}_j^{n})-\frac{\Delta t}{2} \sigma^{k}(\brho_j^{n},\bm{R}_j^{n}), \\
\label{eq:slopesv1}
\sigma^{k}(\brho_{j}^{n},\bm{R}_{j}^{n})&=\mm\bigg(\frac{F_k(\rho_{j}^{k,n},\bm{R}_{j}^{n})-F_k(\rho_{j-1}^{k,n},\bm{R}_{j-1}^{n})}{\Delta x},\frac{F_k(\rho_{j+1}^{k,n},\bm{R}_{j+1}^{n})-F_k(\rho_{j}^{k,n},\bm{R}_{j}^{n})}{\Delta x}\bigg).
\end{align}
Note that \rv{for $\ell=1,\dots,m$ each element of the nonlocal term $\bm{R}(t,x)$, cf. \eqref{eq:Rl}, is given by}
$$ R^\ell(t,x)= \sum_{k=1}^N R^{\ell,k}(t,x).$$
Hence, we need to approximate $R^{\ell,k}(t^{n+1/2},x_j)$ \rv{for $j\in\Z$}. To simplify the notation we will neglect the indices $\ell=1,\dots,m$ in the following lines.
This can also be viewed as considering the special case $m=1$.
\rv{To approximate the remaining terms in \eqref{eq:system predictor} we use, as before, a Taylor expansion}
\begin{align}\label{eq:system convolution taylor}
{R}_j^{k,n+1/2}\approx {R}_j^{k,n}+\frac{\Delta t}{2}{R}_t^k(t^n,x_j).
\end{align}
\rv{Here, $R_t^k$ denotes the time derivative of the corresponding nonlocal term.} We approximate the convolution terms \rv{in \eqref{eq:system predictor}--\eqref{eq:systemcorrector}} using the midpoint rule,
\begin{align}
    R^{k}(t^n,x_j)=&\int_{x_{j-N_1^{k}}}^{x_{j+1/2-N_1^{k}}}\tilde{\rho}^k(t^n,y)\omega^{k}(y-x_j)dy+\sum_{l=-N_1^{k}}^{N_2^k-2}\int_{x_{j+1/2+l}}^{x_{j+3/2+l}} \tilde{\rho}^k(t^n,y)\omega^{k}(y-x_j)dy\nonumber\\
 &+\int_{x_{j+N_2^k-1/2}}^{x_{j+N_2^k}}\tilde{\rho}^k(t^n,x)\omega^{k}(y-x_j)dy,\nonumber\\
 =&\frac{\Delta x}{2}\left(\rho_{j-N_{1}^{k}}^{k,n}+\frac{\Delta x}{4}s_{j-N_1^{k}}^{k,n}\right)\omega^{k}\left(\frac{\Delta x}{4}-N_1^{k}\Delta x\right)+\Delta x\sum_{l=-N_{1}^{k}}^{N_2^{k}-2}\rho_{j+l+1}^{k,n}\omega^{k}((l+1)\Delta x)\nonumber\\
 &+\frac{\Delta x}{2}\left(\rho_{j+N_2^{k}}^{k,n}-\frac{\Delta x}{4}s^{k,n}_{j+N_2^{k}}\right) \omega^{k}\left(N_2^{k}\Delta x-\frac{\Delta x}{4}\right).\label{eq:RNT}
 \end{align}
\rv{To approximate $R_t$ in \eqref{eq:system convolution taylor} we use first the balance law \eqref{eq:generalsystemsimple}
$$\partial_t R^k(t,x)=\int_{\R} \omega^k(y-x)\partial_t \rho^k(t,y) dy=\int_{\R} \omega^k(y-x)(S(\brho,\bm{R})-\partial_x F(\rho^k,\bm{R}))(t,y) dy.$$
Then using an appropriate quadrature rule together with the minmod limiter \eqref{eq:slopesv1}}, we apply the same procedure as in \eqref{eq:RNT}.
\begin{equation}\label{eq:system Rt}
\begin{aligned}
  R_t^{k}(t^n,x_j)\approx &\frac{\Delta x}{2}\left(S_k(\brho^n_{{j}-N_1^{k}},\bm{R}_{{j}-N_1^{k}}^n)-\sigma^{k}(\brho^n_{{j}-N_1^{k}},\bm{R}_{{j}-N_1^{k}}^n)\right)\omega^{k}\left(\frac{\Delta x}{4}-N_1^{k}\Delta x\right)\\
 &+\Delta x\sum_{l=-N_1^{k}}^{N_2^k-2}\left(S_k(\brho_{j+l+1}^n,\bm{R}_{j+l+1}^n)-\sigma^{k}(\brho_{j+l+1}^n,\bm{R}_{j+l+1}^n)\right)\omega^k((l+1)\Delta x)\\
 &+\frac{\Delta x}{2}\left(S_k({\brho}^n_{{j}+N_2^{k}},\bm{R}_{{j}+N_2^{k}}^n)-\sigma^{k}({\brho}^n_{{j}+N_2^{k}},\bm{R}_{{j}+N_2^{k}}^n)\right)\omega^k\left(N_2^{k}\Delta x-\frac{\Delta x}{4}\right).
\end{aligned}
\end{equation}
\begin{figure}
    \centering
	\newcommand\checkindexnew[1]{
		\pgfmathsetmacro{\var}{#1}
		\pgfmathparse{ifthenelse(\var==0, "",ifthenelse(\var>0, "+#1","#1"))} \pgfmathresult}%


\begin{tikzpicture}[scale=1.5]

\draw[->] (-1,-0.5) -- (4,-0.5) node[anchor=north] {$x$};
    \node[below] at (0, -0.5) {$x_{j-1}$};
    \node[below] at (1.5, -0.5) {$x_{j}$};
    \node[below] at (3, -0.5) {$x_{j+1}$};

\draw[thick] (-0.5, 0.4) -- (0.9, 0.4);
\draw[thick] (0.9,0.8) -- (2.3,0.8);  
\draw[thick] (2.3,1.2) -- (3.7,1.2);

    \coordinate (mid1) at (0.2, 0.4);
    \coordinate (mid2) at (1.6, 0.8);
    \coordinate (mid3) at (3, 1.2);
    
    \draw[thick] (mid1) -- ++(0.7, 0.15);  
    \draw[thick] (mid1) -- ++(-0.7, -0.15); 
    
    \draw[thick] (mid2) -- ++(0.7, 0.15);  
    \draw[thick] (mid2) -- ++(-0.7, -0.15); 
    
    \draw[thick] (mid3) -- ++(0.7, 0.15);  
    \draw[thick] (mid3) -- ++(-0.7, -0.15); 

\node at (0.2, 0.1) {${\rho}^{k,n}_{j-1}$};
\node at (1.6, 0.5) {${\rho}^{k,n}_{j}$};
\node at (3, 0.9) {${\rho}^{k,n}_{j+1}$};

\draw[dashed] (0.2, 0.4) -- (0.2, 1.6); 
\draw[dashed] (1.6, 0.8) -- (1.6, 1.9); 
\draw[dashed] (3, 1.2) -- (3, 1.9); 

  \draw[thick] (0.2, 1.6) -- (1.6, 1.6);
 \draw[thick] (1.6,1.9) -- (3,1.9);

\node at (1, 1.2) {${\rho}^{k,n+1}_{j-1/2}$};
\node at (2.4, 1.5) {${\rho}^{k,n+1}_{j+1/2}$};
\node[above] at (1.6, 2.5) {${\rho}^{k,n+1}_{j}$};

    \coordinate (mid1) at (0.9, 1.6);
    \coordinate (mid2) at (2.3, 1.9);

    \draw[thick] (mid1) -- ++(0.7, 0.1);  
    \draw[thick] (mid1) -- ++(-0.7, -0.1); 
    
    \draw[thick] (mid2) -- ++(0.7, 0.1);  
    \draw[thick] (mid2) -- ++(-0.7, -0.1); 
    

\draw[dashed] (0.9, 1.6) -- (0.9, 2.5); 
\draw[dashed] (2.3, 1.9) -- (2.3, 2.5); 
  \draw[thick] (0.9, 2.5) -- (2.3, 2.5);
\end{tikzpicture}

    \caption{NT scheme reconstruction from non-staggered to staggered grid for fixed $k\in\{1,\dots,N\}$, compare Figure 3.1 from  \cite{jiang1998high}}\label{fig:1}
\end{figure}
The space derivative of the flux is approximated as before by the minmod limiter, i.e., equation \eqref{eq:slopesv2}.\\
Let us make the following remarks to the approximation of the nonlocal terms in \eqref{eq:RNT} and \eqref{eq:system Rt}.
\begin{remark}\label{rem:firstorderquad}
    \rv{We note that the first interval in \eqref{eq:system Rt} is obtained by the following first-order quadrature rule
    \[\int_a^b f(x)g(x)dx\approx(b-a)f(a) g\left(\frac{a+b}{2}\right).\]
    For the last interval we use a similar quadrature rule and replace $f(a)$ by $f(b)$.}
    On the one hand, a first-order approximation of $R_t^k(t^n,x_j)$ is sufficient to maintain the second-order accuracy of $R_j^{k,n+1/2}$.
    On the other hand, this specific quadrature rule allows us to use the same weights as in \eqref{eq:RNT} and it does not involve any additional (spatial) evaluations of the source term or the slopes, such that we can rely on already computed values in the implementation.
\end{remark}
\begin{remark}
    In contrast to previous works as \cite{belkadi2022non,GoatinScialanga2016}, we approximate the time derivative of the nonlocal term by using the minmod limiter. Since the slopes are already used to compute \eqref{eq:systemcorrector}, this approach is rather simple. 
    Alternatively, one can proceed by applying integration by parts as in \cite{belkadi2022non,GoatinScialanga2016} which requires the derivative of the kernel $\omega$. \rv{We also refer to Section \ref{sec:BV}}.
\end{remark}
\begin{remark}
    In case it is not possible to choose a grid such that all $|\eta_1^k|/\Delta x$ and $\eta_2^k/\Delta x$ are natural numbers, the first and last intervals for the approximation of the integrals need to be changed.
    Instead of computing a second-order accurate approximation over the intervals $[x_{j-N_1^k},x_{j-N_1^k+1/2}]$ and $[x_{j+N_2^k-1/2},x_{j+N_2^k}]$ in \eqref{eq:RNT}, we compute it for $[x_j+\eta_1^k,x_{j-N_1^k+1/2}]$ and $[x_{j+N_2^k-1/2},x_{j}+\eta_2^k]$. 
\end{remark}
\par Now, we convert the staggered second-order scheme \eqref{eq:system predictor}–\eqref{eq:systemcorrector} to \rv{a} non-staggered grid following \cite{jiang1998high}. 
First, we construct a piecewise-linear interpolant through the calculated staggered cell-averages at time $t^{n+1}$ (see Figure \ref{fig:1}) which is, for each $k=1,\dots,N$, given by
\begin{align}\label{eq:system_poly_staggered}
\hat{\rho}_{j+1/2}^{k,n+1}(x)={\rho}_{j+1/2}^{k,n+1}+s_{j+1/2}^{k,n+1}(x-x_{j+1/2}).
\end{align}
The staggered discrete derivatives, $s_{j+1/2}^{k,n+1}$, are given by
\begin{align}\label{eq:system min_staggerd}
s_{j+1/2}^{k,n+1}=\mm\left(\frac{{\rho}_{j+3/2}^{k,n+1}-{\rho}_{j+1/2}^{k,n+1}}{\Delta x},\frac{{\rho}_{j+1/2}^{k,n+1}-{\rho}_{j-1/2}^{k,n+1}}{\Delta x}\right).
\end{align}
Then, the cell-averages at the next time step, ${\brho}_j^{n+1},$ are obtained by averaging the interpolant \eqref{eq:system_poly_staggered}.
This results in a non-staggered corrector scheme for each $k=1,\dots,N$
 \begin{equation}
    \begin{aligned}
    \label{eq:system staggered_final}
\rho_j^{k,n+1}=&\frac{1}{\Delta x}\bigg[\int_{x_{j-1/2}}^{x_j}\hat\rho_{j-1/2}^{k,n+1}(x)dx+\int_{x_j}^{x_{j+1/2}}\hat\rho_{j+1/2}^{k,n+1}(x)dx\bigg],\\
=&\frac{\rho_{j-1}^{k,n}+2\rho_j^{k,n}+\rho_{j+1}^{k,n}}{4}-\frac{\Delta x}{16}\left({s_{j+1}^{k,n}-s_{j-1}^{k,n}}\right)
    -\frac{\Delta x}{8}\left({s_{j+1/2}^{k,n+1}-s_{j-1/2}^{k,n+1}}\right)\\
    &-\rv{\frac{\lambda}{2}}\bigg[F_k(\rho_{j+1}^{k,n+1/2},\bm{R}_{j+1}^{n+1/2})-F_k(\rho_{j-1}^{k,n+1/2},\bm{R}_{j-1}^{n+1/2})\bigg]\\
    &+\frac{\Delta t}{4}\bigg[ S_k(\brho^{n+1/2}_{j+1}, \bm{R}^{n+1/2}_{j+1})+2 S_k(\brho^{n+1/2}_{j}, \bm{R}^{n+1/2}_{j})+S_k(\brho^{n+1/2}_{j-1}, \bm{R}^{n+1/2}_{j-1})\bigg].
    \end{aligned}
\end{equation}
Here, $s_j^{k,n+1}$ and\, $s_{j+1/2}^{k,n+1}$ are the discrete derivatives at time levels $t^n$ and $t^{n+1}$ given in \eqref{eq:slope for X} and \eqref{eq:system min_staggerd}, and $\brho^{n+1/2}$ and $\bm{R}^{n+1/2}$ are predicted at time level $t^{n+1/2}$ according to \eqref{eq:systemcorrector} and \eqref{eq:system convolution taylor}.
\subsection{\texorpdfstring{$L^{\infty}$}{Linf}-estimate}\label{sec:posinf}
\rv{Now, we prove that under the Assumption \ref{set1}, the numerical approximations of \eqref{eq:generalsystemsimple} satisfy an exponential bound in time for a sufficiently small time horizon, i.e., as long as the Assumption \ref{set1} hold and in particular as long as the flux and source functions remain Lipschitz continuous.}
To prove this, we first need some estimates on the nonlocal terms.
\begin{lemma}\label{lem:estimates}
   \rv{Let us consider $n\in \N$ fixed, such that the Assumption \ref{set1} holds and \newline $\max_{i\in \Z} \|\brho_i^n\|_\infty<\infty$. Then, we obtain for $\ell=1,\dots,m$ and $k=1,\dots,N$ the following estimates
    \begin{align*}
        |R_{j+1}^{\ell,k,n}-R_{j}^{\ell,k,n}|\leq \mathcal{C}_1 \Delta x \max_{i\in \Z} \|\brho_i^n\|_\infty\quad\text{and}\quad
        |R_{t,j+1}^{\ell,k,n}-R_{t,j}^{\ell,k,n}|\leq (\mathcal{C}_2 + \mathcal{C}_3 \Delta x) \max_{i\in \Z} \|\brho_i^n\|_\infty 
    \end{align*}
    with $\mathcal{C}_1,\mathcal{C}_2, \mathcal{C}_3>0$ depending only on $\omega, L_F$ and $L_S$ .}
\end{lemma}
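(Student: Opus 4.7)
The plan is to exploit the explicit quadrature formulas \eqref{eq:RNT} for $R^{\ell,k,n}_j$ and \eqref{eq:system Rt} for $R^{\ell,k,n}_{t,j}$ together with a discrete summation-by-parts (index-shift) argument. The key structural tool, coming from $\omega^{\ell,k}\in\BV(\R;\R_+)$, is the discrete total-variation bound $\sum_l |\omega^{\ell,k}((l+1)\Delta x)-\omega^{\ell,k}(l\Delta x)|\leq \TV(\omega^{\ell,k})$, which replaces at the discrete level the continuous estimate $\|(\omega\ast \rho)(\cdot+h)-\omega\ast \rho\|_\infty \leq |h|\,\TV(\omega)\,\|\rho\|_\infty$. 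I would use it to control both $R^{\ell,k,n}_{j+1}-R^{\ell,k,n}_j$ and $R^{\ell,k,n}_{t,j+1}-R^{\ell,k,n}_{t,j}$ by shifting the summation index in the expression at $j+1$ by $l\mapsto l-1$, so that the $\rho^{k,n}$ (respectively $S_k-\sigma^k$) factors align with those at $j$ and only kernel differences survive in the interior part of the sum.

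For the first estimate, this reindexing produces an interior contribution of the form $\Delta x\sum_l \rho^{k,n}_{j+l+1}\bigl(\omega^{\ell,k}(l\Delta x)-\omega^{\ell,k}((l+1)\Delta x)\bigr)$, which is controlled by $\Delta x\,\TV(\omega^{\ell,k})\max_i\|\brho^n_i\|_\infty$. The two boundary half-intervals in \eqref{eq:RNT} do not align perfectly after the shift, but they already carry a prefactor $\Delta x/2$; using the minmod property $\Delta x|s^{k,n}_i|\leq 2\max_i\|\brho^n_i\|_\infty$ and $\omega^{\ell,k}\in L^\infty$ (via $\BV(\R)\hookrightarrow L^\infty(\R)$ in one dimension combined with $\omega^{\ell,k}\in L^1$), each such boundary contribution is $O(\Delta x)\max_i\|\brho^n_i\|_\infty$. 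Collecting yields $\mathcal{C}_1$ depending only on $\|\omega^{\ell,k}\|_\infty$, $\TV(\omega^{\ell,k})$ and the size of $\text{supp}(\omega^{\ell,k})$.

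For the second estimate, I would apply the same index-shift to \eqref{eq:system Rt}, now with summand $S_k(\brho^n_i,\bm{R}^n_i)-\sigma^k(\brho^n_i,\bm{R}^n_i)$ in place of $\rho^{k,n}_i$. Two a priori bounds feed the argument: Lipschitz continuity of $S_k$ with $S_k(\brho_m,\bm{R})=0$ gives $|S_k|\leq C_S \max_i\|\brho^n_i\|_\infty$, the dependence on $\bm{R}^n_i$ being absorbed via $\|\omega^{\ell,k}\|_{L^1}=1$; Lipschitz continuity of $F_k$ with $F_k(\rho_m,\bm{R})=0$ together with the minmod bound gives $|\sigma^k|\leq (2L_F/\Delta x)\max_i\|\brho^n_i\|_\infty+O(1)\max_i\|\brho^n_i\|_\infty$ once the first estimate is used on $\|\bm{R}^n_{i+1}-\bm{R}^n_i\|$. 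Feeding these into the index-shift formula, the $\sigma^k$-contribution produces $\Delta x\cdot(2L_F/\Delta x)\,\TV(\omega^{\ell,k})\max_i\|\brho^n_i\|_\infty=\mathcal{C}_2\max_i\|\brho^n_i\|_\infty$, whereas the bounded $S_k$-contribution carries an extra factor of $\Delta x$ and yields the $\mathcal{C}_3\Delta x$ correction. The boundary half-intervals are handled exactly as in the first estimate and absorbed into $\mathcal{C}_2$.

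The main obstacle is the careful treatment of the two one-sided boundary half-intervals in the quadratures \eqref{eq:RNT} and \eqref{eq:system Rt}, which do not admit a clean index shift. They must either be regrouped with the adjacent interior terms to restore the $\omega^{\ell,k}(l\Delta x)-\omega^{\ell,k}((l+1)\Delta x)$ structure, or estimated separately using $\omega^{\ell,k}\in L^\infty$, making sure the $\Delta x/4$-slope corrections of \eqref{eq:RNT} and the $1/\Delta x$ growth of $\sigma^k$ in \eqref{eq:system Rt} do not spoil the claimed orders in $\Delta x$. The bookkeeping is essentially deterministic but somewhat tedious; the fact that the constants depend only on $\omega, L_F, L_S$ follows because each step only uses $\|\omega^{\ell,k}\|_{L^1}$, $\|\omega^{\ell,k}\|_\infty$, $\TV(\omega^{\ell,k})$ and the Lipschitz constants of $F_k$ and $S_k$.
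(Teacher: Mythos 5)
Your proposal is correct and follows essentially the same route as the paper's proof: an index shift (discrete summation by parts) in the quadrature formulas so that only kernel differences survive in the interior sum, controlled by the discrete total variation of $\omega^{\ell,k}$, with the boundary half-intervals and slope corrections absorbed via $\|\omega^{\ell,k}\|_\infty$ and the minmod bound, and with $\Delta x|\sigma^k|\leq 2L_F\max_i\|\brho_i^n\|_\infty$ plus the Lipschitz bound on $S_k$ producing the $\mathcal{C}_2$ and $\mathcal{C}_3\Delta x$ contributions, respectively. The resulting constants match those in the paper up to the identification of $\|\omega^{\ell,k}\|_\infty$ with its $\BV$ seminorm.
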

\begin{proof}\
    For simplicity, we neglect the indexes $\ell$ and $k$ during parts of this proof.
    \rv{\begin{align*}
        R_{j+1}^n-R_j^n=&\frac{\Delta x}{2}\omega\left(\left(\frac14-N_1\right)\Delta x\right)\left(\rho_{j-N_1+1}^n+\frac{\Delta x}{4} s_{j-N_1+1}^n-\rho_{j-N_1}^n-\frac{\Delta x}{4} s_{j-N_1}^n\right)\\
        &+\Delta x \sum_{l=-N_1}^{N_2-2}\omega((l+1)\Delta x)  \left(\rho_{j+l+2}^n-\rho_{j+l+1}^n\right)\\
        &+\frac{\Delta x}{2}\omega\left(\left(N_2-\frac14\right)\Delta x\right)\left(\rho_{j+N_2+1}^n-\frac{\Delta x}{4} s_{j+N_2+1}^n-\rho_{j+N_2}^n+\frac{\Delta x}{4} s_{j+N_2}^n\right)\\
        =&\frac{\Delta x}{2}\omega\left(\left(\frac14-N_1\right)\Delta x\right)\left(\rho_{j-N_1+1}^n+\frac{\Delta x}{4} s_{j-N_1+1}^n-\rho_{j-N_1}^n-\frac{\Delta x}{4} s_{j-N_1}^n\right)\\
        &+\Delta x \sum_{l=-N_1}^{N_2-2} \rho_{j+l+1}^n \left(\omega((l+2)\Delta x)-\omega((l+1)\Delta x)\right)-\Delta x \omega((1-N_1)\Delta x)\rho_{j-N_1+1}\\
        &+\frac{\Delta x}{2}\omega\left(\left(N_2-\frac14\right)\Delta x\right)\left(\rho_{j+N_2+1}^n-\frac{\Delta x}{4} s_{j+N_2+1}^n-\rho_{j+N_2}^n+\frac{\Delta x}{4} s_{j+N_2}^n\right)
    \end{align*}
    Since $\omega$ is of bounded variation and we assumed that $\brho_j^n$ is bounded we get
        \[|R_{j+1}^{\ell,k,n}-R_j^{\ell,k,n}|\leq 2 \Delta x  |  \omega^{\ell,k}|_{BV([-\eta_1^{\ell,k},\eta_2^{\ell,k}])} \max_{i\in \Z} \|\brho_i^n\|_\infty.\]
    Analogously, to above and using the assumptions on the source term, we can obtain
    \begin{align*}
    |R_{t,j+1}^{\ell,k,n}-R_{t,j}^{\ell,k,n}|&\leq 2|   \omega^{\ell,k}|_{BV([-\eta_1^{\ell,k},\eta_2^{\ell,k}])} (\max_{i\in \Z} \Delta x |\sigma^k(\brho_i^n,\bm{R}_i^n)|+\max_{i\in \Z} \Delta x |S_k(\brho_i^n,\bm{R}_i^n)-S_k(\brho_m,\bm{R}_i^n)|)\\
    &\leq  2(2 \rv{L_F}+\Delta x L_S) |   \omega^{\ell,k}|_{BV([-\eta_1^{\ell,k},\eta_2^{\ell,k}])} \max_{i\in \Z} \|\brho_i^n\|_\infty.
    \end{align*}}
\end{proof}
\rv{
With the help of Lemma \ref{lem:estimates} we are now able to prove an upper bound on the solutions of the NT scheme for a sufficiently small time $t^{n^*}>0$.
A result on the positivity preservation of the NT scheme (under additional assumptions) can be found in Appendix \ref{app:pos}.
\begin{theorem}\label{thm:max}
Let the Assumption \ref{set1} be satisfied. Then under the CFL condition 
\begin{equation}\label{eq:CFLmaxNT}
     \rv{\lambda} \rv{L_F} \leq \frac{\sqrt{2}-1}{2},
\end{equation}
there exist $n^*\in \N$ and $\mathcal{C}_4\geq 0$ depending only on $\omega,\ F,\ S,\ M,\ m$ and $N$ such that the approximate solutions of the NT scheme satisfy
    \begin{equation}\label{eq:maxbound}
            |\rho_j^{k,n}|\leq \max_{k=1,\dots,N}\|\rho_0^k\|_{L^\infty(\R;\R_+)}\exp( \mathcal{C}_4 t^n)
    \end{equation}
    for any $j\in\Z,\ k\in\{1,\dots,N\}$ and $n\in \{0,\dots,n^*\}$.
\end{theorem}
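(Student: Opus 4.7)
The plan is to prove the exponential bound \eqref{eq:maxbound} by induction on the time index $n$, establishing a one-step estimate of the form $M_{n+1} \leq (1 + \mathcal{C}_4 \Delta t) M_n$, where $M_n := \max_{j \in \Z,\, k \in \{1,\dots,N\}} |\rho_j^{k,n}|$ (possibly with an additive $O(\Delta t)$ term absorbable into the exponential). Iterating this recursion yields $M_n \leq M_0 \exp(\mathcal{C}_4 t^n)$ via discrete Gronwall. The horizon $n^*$ is determined a posteriori by the requirement $M \exp(\mathcal{C}_4 t^{n^*}) \leq \rho_M$, so that the Lipschitz estimates of Assumption \ref{set1} remain valid throughout the induction.

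The first step is to reduce the non-staggered bound to control of the staggered values $\rho_{j+1/2}^{k,n+1}$. Indeed, \eqref{eq:system staggered_final} can equivalently be written as $\rho_j^{k,n+1} = \tfrac{1}{2}(\rho_{j-1/2}^{k,n+1} + \rho_{j+1/2}^{k,n+1}) + \tfrac{\Delta x}{8}(s_{j-1/2}^{k,n+1} - s_{j+1/2}^{k,n+1})$, namely the cell average of the staggered MUSCL interpolant \eqref{eq:system_poly_staggered} over $[x_{j-1/2}, x_{j+1/2}]$. Because the minmod slopes \eqref{eq:system min_staggerd} are monotonicity-preserving, $|\hat\rho_{j+1/2}^{k,n+1}(x)|$ on each staggered cell is bounded by the maximum of the three adjacent staggered cell averages, from which $\max_j |\rho_j^{k,n+1}| \leq \max_j |\rho_{j+1/2}^{k,n+1}|$ follows. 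It then suffices to estimate the staggered update.

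Next, I would collect pointwise bounds on the predictor quantities $\rho_j^{k,n+1/2}$, $\sigma^k(\brho_j^n, \bm{R}_j^n)$, $S_k(\brho_j^n, \bm{R}_j^n)$, and $\bm{R}_j^{n+1/2}$ in terms of $M_n$. These rely on the Lipschitz continuity of $F_k$ and $S_k$ together with the vanishing conditions $F_k(\rho_m, \cdot) = 0$ and $S_k(\brho_m, \cdot) = 0$ from Assumption \ref{set1}, the minmod bound on $\sigma^k$, the $L^1$-normalization and BV regularity of the kernels, and Lemma \ref{lem:estimates} which provides the crucial spatial-variation estimate $\|\bm{R}_{j+1}^{n+1/2} - \bm{R}_j^{n+1/2}\| = O(\Delta x\, M_n)$. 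With these ingredients, I would bound $|\rho_{j+1/2}^{k,n+1}|$ from \eqref{eq:system predictor}: its first two terms equal the average of $\tilde\rho^k(t^n,\cdot)$ over $[x_j, x_{j+1}]$ and are therefore bounded by $M_n$ by the same minmod monotonicity argument; the source contribution is of order $\Delta t\, M_n$; and the flux difference $\lambda[F_{j+1}^{k,n+1/2} - F_j^{k,n+1/2}]$ splits into a nonlocal-variation part that is $O(\Delta t\, M_n)$ thanks to Lemma \ref{lem:estimates}, and a density-variation part controlled by the CFL condition \eqref{eq:CFLmaxNT}.

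The principal technical obstacle lies in showing that the density-variation contribution $\lambda L_F |\rho_{j+1}^{k,n+1/2} - \rho_j^{k,n+1/2}|$, taken together with the compounded Lipschitz factors introduced by the predictor step \eqref{eq:systemcorrector}, produces only multiplicative growth of the form $1 + \mathcal{C}_4 \Delta t$ rather than a fixed constant strictly above $1$. The strengthened CFL condition $\lambda L_F \leq (\sqrt{2}-1)/2$, stricter than the natural CFL \eqref{eq:CFLstart}, is calibrated precisely to balance these compounded Lipschitz factors arising from chaining the predictor and corrector evaluations together with the interpolant averaging step, so that the accumulated per-step growth factor is absorbable into an exponential. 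Once the recursion $M_{n+1} \leq (1 + \mathcal{C}_4 \Delta t) M_n$ is in hand, the exponential bound follows by elementary iteration, with $\mathcal{C}_4$ inheriting its dependence on $\omega$, $F$, $S$, $M$, $m$, and $N$ from the constants appearing in the preceding estimates and in Lemma \ref{lem:estimates}.
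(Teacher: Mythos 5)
Your overall strategy coincides with the paper's: induction in $n$, reduction to the staggered values $\rho_{j+1/2}^{k,n+1}$, control of the nonlocal terms via Lemma \ref{lem:estimates}, and absorption of the flux difference through the CFL condition. However, the central estimate --- the only place where the specific value $(\sqrt{2}-1)/2$ enters --- is not actually carried out, and the decomposition you propose forecloses the only mechanism by which it can be. You bound the reconstruction average $\tfrac12(\rho_j^{k,n}+\rho_{j+1}^{k,n})+\tfrac{\Delta x}{8}(s_j^{k,n}-s_{j+1}^{k,n})$ by $M_n$ \emph{first}, and then add the flux-difference contribution. But that contribution contains the term $\lambda L_F(1+\lambda L_F)\,|\rho_{j+1}^{k,n}-\rho_j^{k,n}|$, which is $O(M_n)$ with coefficient up to $1/2$, \emph{not} $O(\Delta t\,M_n)$: the CFL condition does not make it small, only bounded by $\tfrac14|\rho_{j+1}^{k,n}-\rho_j^{k,n}|$. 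Taking $\rho_j^{k,n}=M_n$, $\rho_{j+1}^{k,n}=-M_n$ your route yields at best $|\rho_{j+1/2}^{k,n+1}|\le \tfrac32 M_n+O(\Delta t\,M_n)$, i.e.\ a fixed geometric growth factor per step rather than $1+\mathcal{C}_4\Delta t$, and the discrete Gronwall argument collapses.

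The missing idea is that the ``average'' part must be retained as $\tfrac12|\rho_j^{k,n}+\rho_{j+1}^{k,n}|$ rather than pre-bounded by $M_n$, so that it can be recombined with \emph{all} the difference terms via the identity $\tfrac12|a+b|+\tfrac12|a-b|=\max(|a|,|b|)$. Concretely, the slope terms contribute at most $\tfrac14|\rho_{j+1}^{k,n}-\rho_j^{k,n}|$ and the density-variation part of the flux difference at most $\lambda L_F(1+\lambda L_F)|\rho_{j+1}^{k,n}-\rho_j^{k,n}|\le\tfrac14|\rho_{j+1}^{k,n}-\rho_j^{k,n}|$ (this is exactly what $\lambda L_F\le(\sqrt2-1)/2$ buys, since $\tfrac{\sqrt2-1}{2}\cdot\tfrac{\sqrt2+1}{2}=\tfrac14$); summing, $|\rho_{j+1/2}^{k,n+1}|\le\tfrac12|\rho_j^{k,n}+\rho_{j+1}^{k,n}|+\tfrac12|\rho_j^{k,n}-\rho_{j+1}^{k,n}|+\mathcal{C}_4\Delta t\,M_n=\max(|\rho_j^{k,n}|,|\rho_{j+1}^{k,n}|)+\mathcal{C}_4\Delta t\,M_n$, which is the $(1+\mathcal{C}_4\Delta t)M_n$ bound you need. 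Your closing paragraph gestures at ``balancing the compounded Lipschitz factors'' but never exhibits this cancellation, and as written your third paragraph is incompatible with it; you should restructure the one-step estimate around the $|a+b|,|a-b|$ splitting.
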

}
\begin{proof}\
\rv{We prove the claim by induction. For $n=0$, the statement is obvious.
In particular, the Assumption \ref{set1} is satisfied. For $k\in\{1,\dots,N\}$ and $n\in \N$ \rv{such that the Assumption \ref{set1} holds,} it is sufficient to prove the bound for the approximations on the staggered grid $\rho_{j+1/2}^{k,n+1}$.  
  By construction $\rho_j^{k,n+1}$ will be bounded, too.}
We start with an estimate on the difference of the fluxes \rv{and fix $k\in\{1,\dots,N\}$}. Here, we get \rv{for $j\in\Z$}
\begin{align*}
    &| F_k(\rho_{j+1}^{k,n+1/2},\bm{R}_{j+1}^{n+1/2})-F_k(\rho_{j}^{k,n+1/2},\bm{R}_{j}^{n+1/2})|\\
   & \leq \rv{L_F} \left(|\rho_{j+1}^{k,n}-\rho_j^{k,n}|+ \frac{\Delta t}{2}\left(|\sigma^k(\brho_j^n,\bm{R}_j^n)|+|\sigma^k(\brho_{j+1}^n,\bm{R}_{j+1}^n)|\right)\rv{+
  \frac{\Delta t}{2}\left(|
  S_{k}(\brho_{j}^{n},\bm{R_{j}}^{n})
  |+|S_{k}(\brho_{j+1}^{n},\bm{R_{j+1}}^{n})|
  \right)}\right)\\
  &+ \rv{L_F}\| \bm{R}^{n+1/2}_{j+1}-\bm{R}^{n+1/2}_j\|.
\end{align*}
\rv{Thanks to Lemma \ref{lem:estimates} it becomes apparent that there exists $\mathcal{C}_5$ depending on $\mathcal{C}_1,\ \mathcal{C}_2,\ \mathcal{C}_3,\  m$ and $N$, such that}
\rv{
\begin{equation}\label{eq:estimateRph}
    \| \bm{R}^{n+1/2}_{j+1}-\bm{R}^{n+1/2}_j\|\leq \mathcal{C}_5 \Delta x \max_{i\in\Z} \|\brho^n_i\|_\infty.
\end{equation}}
Similarly, Lemma \ref{lem:estimates} allows us to get a more precise estimate on the slopes of the fluxes:
\begin{align*}
   \Delta x |\sigma^k(\brho_j^n,\bm{R}_j^n)| &\leq | F_k(\rho_{j+1}^{k,n},\bm{R}_{j+1}^{n})-F_k(\rho_{j}^{k,n},\bm{R}_{j}^{n})|\leq \rv{L_F} |\rho_{j+1}^{k,n}-\rho_{j}^{k,n} |+ \rv{L_F} \| \bm{R}^{n}_{j+1}-\bm{R}^{n}_j\|\\
   &\leq \rv{L_F} |\rho_{j+1}^{k,n}-\rho_{j}^{k,n} |+ \mathcal{C}_5 \Delta x \rv{L_F \max_{i\in\Z} \|\brho^n_i\|_\infty}.
\end{align*}
\rv{The source terms can be easily estimated by
\begin{align*}
    \left|
  S_{k}(\brho_{j}^{n},\bm{R_{j}}^{n})
  + S_{k}(\brho_{j+1}^{n},\bm{R_{j+1}}^{n}) -S_k(\brho_m,\bm{R}_{j+1}^{n})-S_k(\brho_m,\bm{R}_{j}^{n})
  \right|\leq & 2 L_s  N \max_{i\in\Z} \|\brho^n_i\|_\infty 
\end{align*}
}
\rv{Combining the estimates above results in}
\begin{align*}
    | F_k(\rho_{j+1}^{k,n+1/2},\bm{R}_{j+1}^{n+1/2})-F_k(\rho_{j}^{k,n+1/2},\bm{R}_{j}^{n+1/2})|&\leq \rv{L_F} \left(\rv{\lambda} \rv{L_F}+1\right) |\rho_{j+1}^{k,n}-\rho_j^{k,n}|\\
    &+\rv{ \left(L_F{ \mathcal{C}_5} (1+\Delta t) \Delta x+\Delta t  L_s  N \right) \max_{i\in\Z} \|\brho^n_i\|_\infty}.
\end{align*}
\rv{
Due to the Lipschitz continuity of the source terms, we also have
\begin{align*}
    &S_k(\brho_{j+1}^{n+1/2},\bm{R}_{j+1}^{n+1/2})+S_k(\brho_{j}^{n+1/2},\bm{R}_{j}^{n+1/2})-S_k(\brho_m,\bm{R}_{j+1}^{n+1/2})-S_k(\brho_m,\bm{R}_{j}^{n+1/2})\\
    &\leq N L_S (2+L_F(1+\mathcal{C}_5 \Delta x+N L_S\Delta t/2)) \max_{i\in\Z} \|\brho^n_i\|_\infty
\end{align*}
}
\rv{Therefore, we obtain
\begin{align*}
    |\rho_{j+1/2}^{k,n+1}|&\leq \frac12|\rho_j^{k,n}+\rho_{j+1}^{k,n}|+\frac14|\rho_j^{k,n}-\rho_{j+1}^{k,n}|+\rv{\lambda}\rv{L_F} \left(\rv{\lambda} \rv{L_F}+1\right) |\rho_{j+1}^{k,n}-\rho_j^{k,n}|+  \mathcal{C}_4 \Delta t \rv{\max_{i\in\Z} \|\brho^n_i\|_\infty}\\
    &\leq \frac12(|\rho_j^{k,n}+\rho_{j+1}^{k,n}|+|\rho_j^{k,n}-\rho_{j+1}^{k,n}|)+  \mathcal{C}_4 \Delta t \rv{\max_{i\in\Z} \|\brho^n_i\|_\infty}\leq (1 +  \mathcal{C}_4 \Delta t) \rv{\max_{i\in\Z} \|\brho^n_i\|_\infty}.
\end{align*}}
\rv{Note that $\lambda L_F(1+\lambda L_F)\leq 1/4$ due to the CFL condition \eqref{eq:CFLmaxNT}. If Assumption \ref{set1} is still satisfied, we can apply the formula again, otherwise we set $n^*=n+1$. Applying this procedure recursively, we obtain the desired estimate and $n^*$.}
\end{proof}
\rv{
\begin{remark}
    Due to the local Lipschitz bounds on the flux and the source function, we obtain the $L^\infty$ bound as long as the density remains below $\rho_M$, which is valid for a sufficiently small time horizon. 
    For global Lipschitz bounds the results can be extended to any finite time horizon.
\end{remark}
}
 \section{\rv{Weak-\(*\) convergence}}\label{weakconvergence}
In this section, we analyze the weak-\(*\) convergence of the NT scheme. Under the Assumption \ref{set1}, the numerical solutions remain uniformly bounded in \(L^\infty\), which allows us to discuss convergence in the weak-\(*\) sense.
We recall the following compactness result for the weak-\(*\) topology, (see, e.g., \cite[page~131]{eymard2000finitevolume})
. A sequence $(\rho^n)_{n\in\mathbb{N}}\subset L^{\infty}(\mathbb{R}\times(0,T))$ converges to $\rho\in L^{\infty}(\mathbb{R}\times(0,T))) $ if 
    \begin{align*}
        \int_{0}^{T}\int_{\mathbb{R}}(\rho^n(t,y)-\rho(t,x))\phi(t,x)dxdt\to 0\quad \text{as}\ n\rightarrow \infty\quad \forall\ \phi\in L^1(\mathbb{R}\times(0,T))
    \end{align*}
    To apply the weak-$*$ convergence the flux and source functions need to be linear in the density.
    Hence, we require the Assumption \ref{set2}.
    In addition, we need the following technical modification of the slopes
    {\begin{small}
    \begin{equation}\label{eq:slopesmodconv}
            \begin{aligned}
        \Delta x\, s_j^{k,n} =& \mm(\rho_{j+1}^{k,n}-\rho_j^{k,n},\, \rho_j^{k,n}-\rho_{j-1}^{k,n},\, \text{sign}(\rho_j^{k,n}-\rho_{j-1}^{k,n})C \Delta x^{\delta}),\\
        \Delta x\, \sigma^{k}(\brho_{j}^{n},\bm{R}_{j}^{n})=&\mm\big({F_k(\rho_{j}^{k,n},\bm{R}_{j}^{n})-F_k(\rho_{j-1}^{k,n},\bm{R}_{j-1}^{n})},{F_k(\rho_{j+1}^{k,n},\bm{R}_{j+1}^{n})-F_k(\rho_{j}^{k,n},\bm{R}_{j}^{n})},\\
        &\text{sign}({F_k(\rho_{j}^{k,n},\bm{R}_{j}^{n})-F_k(\rho_{j-1}^{k,n},\bm{R}_{j-1}^{n})})C \Delta x^{\delta})\big)\\
    \Delta xs_{j+1/2}^{k,n+1}=&\mm\left({{\rho}_{j+3/2}^{k,n+1}-{\rho}_{j+1/2}^{k,n+1}},{{\rho}_{j+1/2}^{k,n+1}-{\rho}_{j-1/2}^{k,n+1}},\text{sign}(\rho_{j+1/2}^{k,n+1}-{\rho}_{j-1/2}^{k,n+1})C\Delta x^\delta\right)
    \end{aligned}
    \end{equation}
    \end{small}}
    with $C>0$ and $\delta \in (0,1)$.
These adjustments to the slopes are only necessary for the analysis and are typically used in convergence proofs for second-order schemes. In the implementation, they are not needed. We refer to \cite[Remark 5.6]{gd2023convergence} for further details.

We define the piecewise linear approximate solution $\brho_\Delta:[0,T)\times \R\to\R^N$ for $k=1,\dots,N$ as
\begin{align*}
    \rho_\Delta^k(t,x) \coloneq \sum_{j\in \Z} \sum_{n=0}^{n^*}(\rho_j^{k,n}+s_j^{k,n}(x-x_j))\chi_{[x_{j-1/2},x_{j+1/2})}(x)\chi_{[t^n,t^{n+1})}(t).
\end{align*}
In addition, we rewrite the NT scheme in a more compact form.
{\begin{small}
        \begin{align}\label{schemelinear}
            \rho_j^{k,n+1}&=\rho_{j}^{k,n}+\lambda[G_{j-1/2}^{k,n}-G_{j+1/2}^{k,n}]+\Delta t S_j^{k,n}\quad \text{with}\\
          \nonumber          G_{j+1/2}^{k,n}&\coloneq \frac{1}{\lambda}\bigg[\frac{1}{4}(\rho_j^{k,n}-\rho_{j+1}^{k,n})+\frac{\Delta x}{16}(s_{j+1}^{k,n}+s_j^{k,n})+\frac{\Delta x}{8}(s_{j+1/2}^{k,n+1})+ \frac{\lambda}{2}\left(F(\rho_{j+1}^{k,n+1/2},\bm{R}_{j+1}^{n+1/2})+F(\rho_j^{k,n+1/2},\bm{R}_j^{n+1/2})\right)\bigg]\\
        \nonumber S_j^{k,n}&\coloneq\frac{1}{4}\bigg[S_k(\brho_{j+1}^{n+1/2},\bm{R}_{j+1}^{n+1/2})+2S_k(\brho_j^{n+1/2},\bm{R}_{j}^{n+1/2})+S_k(\brho_{j-1}^{n+1/2},\bm{R}_{j-1}^{n+1/2})\bigg]
        \end{align}            
\end{small}}
Thanks to the compactness result, the slopes \eqref{eq:slopesmodconv} and the Assumption \ref{set2} we are able to prove the convergence result in the linear case presented in Theorem \ref{thm:weakconv}.
\begin{proof}[Proof of Theorem \ref{thm:weakconv}]\
    We fix $k\in \{1,...,N\}$,
 multiply \eqref{schemelinear} by $\Delta x\phi_j^{k,n}$, where $\phi_j^{k,n}\coloneq\phi^k(x_j,t^n)$ with $\phi^k\in C_0^1((-42,T^*]\times \mathbb{R};\mathbb{R})\subset L^1((0,T^*)\times \mathbb{R})$ and sum over $n$ and $j$ 
        \begin{align*}        0=\sum_{n=0}^{n^*}\sum_{j\in\mathbb{Z}}\bigg(\underbrace{\Delta x(\rho_j^{k,n+1}-\rho_j^{k,n})\phi_j^{k,n}}_{\eqcolon\mathbb{A}_1}-\underbrace{\Delta t\left(G_{j-1/2}^{k,n}-G_{j+1/2}^{k,n}\right)\phi_j^{k,n}}_{\eqcolon\mathbb{A}_2}-\underbrace{\Delta t \Delta x S_j^{k,n} \phi_j^{k,n}\textbf{}}_{\eqcolon\mathbb{A}_3}\bigg) 
        \end{align*}
        We will treat each term separately. 
        Using summation by parts we can write $\mathbb{A}_1$ as
        \begin{align*}
           \mathbb{A}_1=\sum_{n=0}^{n^*}\sum_{j\in \mathbb{Z}}\Delta x\Delta t\frac{(\rho_j^{k,n+1}-\rho_j^{k,n})}{\Delta t} \phi_j^{k,n}
           &=-\sum_{n=1}^{n^*}\sum_{j\in\mathbb{Z}}\Delta x\Delta t\rho_j^{k,n}\frac{\phi_j^{k,n+1}-\phi_j^{k,n}}{\Delta t}-\sum_{j\in \mathbb{Z}}\rho_j^{k,0}\phi_j^{k,0}\\
        &=-\int_{\Delta t}^{T}\int_{\mathbb{R}}\rho_{\Delta }^{k}(t,x)\partial_t\phi_{\Delta}^{k}(t,x)dxdt-\int_{\mathbb{R}}\rho_{\Delta}^{k}(0,x)\phi^{k}_{\Delta}(0,x)dx
        \end{align*}
        Here, $\phi_\Delta$ is defined as $\rho_\Delta$, but for a piecewise constant function. 
        Now, we can apply the weak-\(*\) convergence for the first term and the dominated convergence theorem for the second one to obtain
        \begin{align}\label{P_1}
            \mathbb{A}_1 \to -\int_{0}^{T}\int_{\mathbb{R}}\rho^{k}(t,x)\phi_t^k(t,x)dxdt-\int_{\mathbb{R}}\rho_0^{k}(x)\phi^k(0,x)dx, \quad \text{weakly-\(*\)}.
        \end{align}
       For the flux part in $\mathbb{A}_2$ we proceed similarly and apply summation by parts
       \begin{small}

        \begin{align*}
            \mathbb{A}_2=&\sum_{n=0}^{n^*}\sum_{j\in \mathbb{Z}}\Delta t\left[G_{j+1/2}^{k,n}-G_{j-1/2}^{k,n}\right]\phi_j^{k,n}=-\sum_{n=0}^{n^*}\sum_{j\in\mathbb{Z}}\Delta x \Delta t G_{j+1/2}^{k,n}\frac{\phi_{j+1}^{k,n}-\phi_j^{k,n}}{\Delta x}\\
            =&-\sum_{n=0}^{n^*}\sum_{j\in\mathbb{Z}}\Delta x \Delta t \bigg(G_{j+1/2}^{k,n}- F_k(\rho_{j}^{k,n},\bm{R}_j^{n})\bigg)\frac{\phi_{j+1}^{k,n}-\phi_j^{k,n}}{\Delta x}
            -\sum_{n=0}^{n^*}\sum_{j\in\mathbb{Z}}\Delta x \Delta t F_k(\rho_{j}^{k,n},\bm{R}_j^{n})\frac{\phi_{j+1}^{k,n}-\phi_j^{k,n}}{\Delta x}\\
            =&-\underbrace{\sum_{n=0}^{n^*}\sum_{j\in\mathbb{Z}}\Delta x \Delta t \bigg(G_{j+1/2}^{k,n}- F_k(\rho_{j}^{k,n},\bm{R}_j^{n})\bigg)\frac{\phi_{j+1}^{k,n}-\phi_j^{k,n}}{\Delta x}}_{\eqcolon\mathbb{B}_1}
            -\underbrace{\sum_{n=0}^{n^*}\sum_{j\in\mathbb{Z}}\Delta x \Delta t \rho_j^{k,n}V_k((\omega\ast \brho)(t^n,x_j))\frac{\phi_{j+1}^{k,n}-\phi_j^{k,n}}{\Delta x}}_{\eqcolon\mathbb{B}_2}\\
            &-\underbrace{\sum_{n=0}^{n^*}\sum_{j\in\mathbb{Z}}\Delta x \Delta t \rho_j^{k,n}\left(V_k(\bm{R}_j^n)-V_k((\omega*\brho)(t^n,x_j)\right)\frac{\phi_{j+1}^{k,n}-\phi_j^{k,n}}{\Delta x}}_{\eqcolon\mathbb{B}_3}
        \end{align*}
                   
       \end{small}
        Passing to the limit $\Delta x\rightarrow 0$ and using the weak-\(*\) convergence, we obtain
        \begin{align}\label{fluxP_2}
           \mathbb{B}_2 \to -\int_0^{T}\int_{\R}\rho^{k}(t,x)V_k((\omega\ast \brho)(t,x))\phi^k_x(t,x)dxdt
        \end{align}
        We will now show that the remaining parts converge to zero. We start with $\mathbb{B}_3$.
        Due to the compact support of $\phi^k$ there exist $R_c$ and $T_c$ with $\phi^k(t,x)=0$ for $t>T_c$ and $|x|>R_c$ and indices $j_1,\ j_2\in \Z,\ n_1,\ n_2\in \{0,\dots,n^*\}$ such that $(|j_1|+|j_2|)\Delta x\leq R_c$ and $(n_2-n_1)\Delta t\leq T_c$, we can estimate 
        \begin{align*}
        |\mathbb{B}_3| \leq \mathcal{C}_6\Delta t\Delta x \sum_{n=n_1}^{n_2}\sum_{j=j_1}^{j_2}\sum_{e=1}^{m}\sum_{k=1}^{N}\underbrace{|{R}_j^{e,k,n}-(\omega^{e,k}\ast\rho^{k})(t^n,x_j)|}_{\eqcolon\mathbb{B}_4}
        \end{align*}
        Here, $\mathcal{C}_6$ depends on the $L^\infty$ bound \eqref{eq:maxbound}, $L_F$ and $\phi^k_x$.
        Since $\omega^{\ell,k}$ is in $L^1$, we can use the weak-\(*\) convergence for $\mathbb{B}_4$ and obtain
        $            \int_{\R}(\rho_\Delta^k-\rho^{k})\omega^{\ell,k}(x_j-y)dy\rightarrow 0,$
        such that $\mathbb{B}_4$ (and consequently $\mathbb{B}_3$) converges to zero in the weak-\(*\) topology.
        We still have $\mathbb{B}_1$
        \begin{align*}
\mathbb{B}_1=&-\sum_{n=0}^{n^*}\sum_{j\in\mathbb{Z}}\Delta x\Delta t (G_{j+1/2}^{k,n}-F_{k}(\rho_j^{k,n},\bm{R}_j^n))\frac{\phi_{j+1}^{k,n}-\phi_j^{k,n}}{\Delta x}\\
=&-\sum_{n=0}^{n^*}\sum_{j\in \mathbb{Z}}\Delta x\Delta t\bigg[\frac{\lambda}{4}\underbrace{(\rho_j^{k,n}-\rho_{j+1}^{k,n})}_{\mathbb{B}_5}+\frac{1}{16}\underbrace{\Delta x(s_{j+1}^{k,n}+s_{j}^{k,n})}_{\leq 2C\Delta x^{\delta}}+\frac{1}{8}\underbrace{\Delta x s_{j+1/2}^{k,n+1}}_{\leq C\Delta x^{\delta}}\\
&+\frac{\lambda}{2}\underbrace{\bigg(F_k(\rho_{j+1}^{k,n+1/2},\bm{R}_{j+1}^{n+1/2})+F_k(\rho_j^{k,n+1/2},\bm{R}_{j}^{n+1/2})\bigg)-F_k(\rho_j^{k,n},\bm{R}_j^n)}_{\mathbb{B}_6}\bigg]\frac{\phi_{j+1}^{k,n}-\phi_j^{k,n}}{\Delta x}
        \end{align*}
        Due to the weak convergence, $\mathbb{B}_5$ converges to zero and using the modified slopes, only $\mathbb{B}_6$ remains.
Here, we add several zeros to use the Lipschitz continuity of $F$ and to estimate each difference        

\begin{align*}
&F_k(\rho_{j+1}^{k,n+1/2},\bm{R}_{j+1}^{n+1/2})+F_k(\rho_j^{k,n+1/2},\bm{R}_{j}^{n+1/2})-2F_k(\rho_j^n,\bm{R}_j^n)\\
=&F_k(\rho_{j+1}^{k,n+1/2},\bm{R}_{j+1}^{n+1/2})\mp F_k(\rho_{j+1}^{k,n+1/2},\bm{R}_{j}^{n+1/2})\mp F_k(\rho_{j+1}^{k,n+1/2},\bm{R}_{j}^{n})\mp F_k(\rho_{j+1}^{k,n},\bm{R}_{j}^{n})\\
&-F_k(\rho_{j}^{k,n},\bm{R}_{j}^{n})+F_k(\rho_j^{k,n+1/2},\bm{R}_{j}^{n+1/2})\mp F_k(\rho_j^{k,n},\bm{R}_{j}^{n+1/2})-F_k(\rho_j^{k,n},\bm{R}_{j}^{n})\\
\leq &L_F \bigg(\|\bm{R}_{j+1}^{n+1/2}-\bm{R}_j^{n+1/2}\|+\Delta t\|\bm{R}_{t,j}^{n}\|+\left|\frac{\Delta t}{2}S_k(\brho_{j+1}^n,\bm{R}_{j+1}^n)-\frac{\Delta t}{2}\sigma^{k}(\brho_{j+1}^n,\bm{R}_{j+1}^n)\right|\\
&+\left|\frac{\Delta t}{2}S_k(\brho_{j}^n,\bm{R}_{j}^n)-\frac{\Delta t}{2}\sigma^{k}(\brho_{j}^n,\bm{R}_{j}^n)\right|\bigg)+F(\rho_{j+1}^{k,n},\bm{R}_{j}^{n})-F(\rho_{j}^{k,n},\bm{R}_{j}^{n})\\
\leq & \mathcal{C}_7 (\Delta x +\Delta x^\delta)+F(\rho_{j+1}^{k,n},\bm{R}_{j}^{n})-F(\rho_{j}^{k,n},\bm{R}_{j}^{n}).
\end{align*}
For the last inequality we used the estimate \eqref{eq:estimateRph} and that $S_k$ is Lipschitz continuous as well as that $\brho_j^n,\ \bm{R}_j^n$ are assumed to be bounded, cf. \eqref{eq:maxbound}.
Hence, $S_k(\brho_j^n,\bm{R}_j^n)$ is bounded.
Further, we use the modified slopes such that $\Delta t\sigma^k(\brho_j^n,\bm{R}_j^n)\leq C\Delta x^{\delta}.$
Using this we are also able to bound $\bm{R}_{t,j}^n$, which gives us the existence of a suitable $\mathcal{C}_7$.
As we bound everything by $\Delta x$ it is obvious that the corresponding parts in $\mathbb{B}_1$ converge to zero.
The remaining difference in the flux $F(\rho_{j+1}^{k,n},\bm{R}_{j}^{n})-F(\rho_{j}^{k,n},\bm{R}_{j}^{n})=(\rho_{j+1}^{k,n}-\rho_{j}^{k,n})V_k(\bm{R}_{j}^{n})$ converges to zero using the weak convergence as for the part $\mathbb{B}_5$. Hence, $\mathbb{B}_1$ converges to zero.
Now, we consider $\mathbb{A}_3$, 
\begin{align*}
\mathbb{A}_3=&-\Delta t\sum_{n=0}^{n^*}\Delta x\sum_{j\in\mathbb{Z}}\phi_j^{k,n}\frac{1}{4}\biggl(S_{k}(\brho_{j+1}^{n+1/2},\bm{R}_{j+1}^{n+1/2})+2S_k(\brho_j^{n+1/2},\bm{R}_{j}^{n+1/2})\\
&+S_k(\brho_{j-1}^{n+1/2},\bm{R}_{j-1}^{n+1/2})\pm 4S_k(\brho_j^n,\bm{R}_j^n)\pm 4S_k(\brho_j^n,(\omega \ast \brho)(t^n,x_j))\biggr)\\
=&-\Delta t\sum_{n=0}^{n^*}\Delta x\sum_{j\in\mathbb{Z}}\phi_j^{k,n}\frac{1}{4}\biggl(S_{k}(\brho_{j+1}^{n+1/2},\bm{R}_{j+1}^{n+1/2})+2S_k(\brho_j^{n+1/2},\bm{R}_{j}^{n+1/2})\\
&+S_k(\brho_{j-1}^{n+1/2},\bm{R}_{j-1}^{n+1/2})-4S_k(\brho_j^n,\bm{R}_j^n)\biggr)-\underbrace{\Delta t\sum_{n=0}^{n^*}\Delta x\sum_{j\in\mathbb{Z}}\phi_j^{k,n}S_k(\brho_j^n,(\omega \ast \brho)(t^n,x_j))}_{\mathbb{S}_1}\\
&-\underbrace{\Delta t\sum_{n=0}^{n^*}\Delta x\sum_{j\in\mathbb{Z}}\phi_j^{k,n}\left(S_k(\brho_j^n,\bm{R}_j^n)-S_k(\brho_j^n,(\omega \ast \brho)(t^n,x_j))\right)}_{\mathbb{S}_2}
\end{align*}
We recall that $S_k$ is linear in each entry of $\brho_j^n$, such that we get 
\begin{align}\label{sourceP_3}
\mathbb{S}_1
\rightarrow -\int_0^{T}\int_{\mathbb{R}}\phi^{k}(t,x)S_k(\brho,\omega \ast \brho)dxdt, \quad \text{weakly-\(*\)}.
\end{align}
For the remaining terms we proceed similarly as above. First,
$$\mathbb{S}_2=\sum\sum\phi_j^{k,n}\biggl(S_k(\rho_j,\bm{R}_j^n)-S_{k}(\rho_j,\omega \ast \brho)\biggr)\rightarrow 0 $$ in the weak-\(*\) topology with the same argument we used for $\mathbb{B}_3$ and $\mathbb{B}_4$, i.e., using the convergence of the nonlocal term.
We are left with the difference of the source terms, which we can treat similarly as the difference of the fluxes. By adding several zeros and using the Lipschitz continuity of $S_k$ we can bound most of the differences by $\mathcal{C}_8 (\Delta x+\Delta x^{\delta})$. As before, we use the estimate on the modified slopes, the estimate \eqref{eq:estimateRph} and the boundedness of $\brho_j^n$ and $\bm{R}_j^n$ to obtain such an estimate. In addition, we use the linearity of $S_k$ and the weak convergence to deal with appearing differences of $S_k(\brho_{j+1}^{n},\bm{R}_{j}^{n})-S_k(\brho_{j}^{n},\bm{R}_{j}^{n})$.
We do not go into detail here, as the proof and steps are the same as for the flux above.
Finally collecting the results \eqref{P_1}, \eqref{fluxP_2}, \eqref{sourceP_3}, we can conclude that the approximate solutions of \eqref{schemelinear} converge in the weak-\(*\) sense to weak solution 
\begin{align*}
    \lim_{\Delta x \rightarrow 0}(\mathbb{A}_1+\mathbb{A}_2+\mathbb{A}_3)
    =&\int_{0}^{T}\int_{\R}\rho^k(t,x)\phi_t^{k}(t,x)dxdt+\int_0^{T}\int_{\R}\rho^{k}(t,x)V_k(\omega\ast \brho)(t,x)\phi_x^{k}(t,x)dxdt\\
    &+\int_0^{T}\int_{\mathbb{R}}\phi^{k}(t,x)S_k(\brho,\omega \ast \brho)(t,x)dxdt+\int_{\R}\rho^k_0(x)\phi^k(0,x)dx.
\end{align*}
    \end{proof}

\section{\rv{$L^1_{loc}$ convergence}}\label{sec:BV}
To obtain a convergence result for nonlinear flux and source functions, we postulate more regularity, in particular on the kernel functions in Assumption \ref{set3}.
Hence, we are able to approximate the spatial derivatives of the nonlocal terms more accurately and prove a total variation bound.
The compact support of the kernel functions postulated in Assumption \ref{set3} is needed for specific estimates during the proofs of this section.
We note that without the modifications below the total variation seems to be bounded, too, as can be seen by the numerical tests in Section \ref{sec:numerical}.\\
Due to Assumption \ref{set3}, i.e. $\omega^{\ell,k}$ is differentiable on its support, we can compute $\partial_x F_k$ more accurately
\begin{align}
\partial_xF_k(\rho^k,\bm{R})=\partial_x(g_k(\rho^k))V_k(\bm{R})+g_k(\rho^k)\sum_{\ell=1}^{m}\partial_{R_\ell}V_k(\bm{R})\partial_x R_\ell.
\end{align}
In particular, we obtain the following approximation of $\partial_x R^\ell$ 
\begin{align}\nonumber
    &\partial_x R^\ell(t^n,x_j)=\sum_{k=1}^{N}\partial_x R^{\ell,k}(t^n,x_j)\\
    =&\sum_{k=1}^{N}(-\omega^{\ell,k}(-\eta_1^{\ell,k})\rho^{k}(t^n,x_j-\eta_1^{\ell,k})+\omega^{\ell,k}(\eta_2^{\ell,k})\rho^{k}(t^n,x_j+\eta_2^{\ell,k})-\int_{-\eta_1^{\ell,k}}^{\eta_2^{\ell,k}} (\omega^{\ell,k})'(y-x_j)\rho^{k}(t,y)dy) \nonumber\\
    \approx&\sum_{k=1}^{N}\bigg(-\omega^{\ell,k}(-\eta_1^{\ell,k})\rho_{j-N_1^{\ell,k}}^{k,n}+\omega^{\ell,k}(\eta_2^{\ell,k})\rho_{j+N_2^{\ell,k}}^{k,n}-\frac{\Delta x}{2}\left(\rho_{j-N_{1}^{\ell,k}}^{k,n}+\frac{\Delta x}{4}s_{j-N_1^{\ell,k}}^{k,n}\right)(\omega^{\ell,k})'\left(\frac{\Delta x}{4}-N_1^{\ell,k}\Delta x\right) \nonumber\\
    &-\Delta x\sum_{l=-N_{1}^{\ell,k}}^{N_2^{\ell,k}-2}\rho_{j+l+1}^{k,n}(\omega^{\ell,k})'((l+1)\Delta x)
    -\frac{\Delta x}{2}\left(\rho_{j+N_2^{\ell,k}}^{k,n}-\frac{\Delta x}{4}s^{k,n}_{j+N_2^{\ell,k}}\right) (\omega^{\ell,k})'\left(N_2^{\ell,k}\Delta x-\frac{\Delta x}{4}\right)\bigg)\nonumber\\
    \eqcolon& \partial_x{R}_{j}^{\ell,n} \label{approximation of R}.
\end{align}
We use~\eqref{approximation of R} to approximate the spatial derivative of the nonlocal part in $\partial_x F_k$ and apply the minmod limiter to $\partial_x(g_k(\rho))$.
Hence, we obtain a more accurate approximation of $\sigma^{k}(\brho_j, \bm{R}
_j^n)$ compared to \eqref{eq:slopesv1}:
\begin{align}\label{eq:slopesv2}
    \sigma^{k}(\brho_j^n,\bm{R}_j^n)=\mm\left(\frac{g(\rho_{j+1}^{k,n})-g(\rho_{j}^{k,n})}{\Delta x},\frac{g(\rho_{j}^{k,n})-g(\rho_{j-1}^{k,n})}{\Delta x}\right) V_k(\bm{R}_j^n)+g_k(\rho_j^{k,n})\sum_{\ell=1}^{m}\partial_{R_\ell}V_k(\bm{R}_j^n)\partial_x{R}_{j}^{\ell,n}
\end{align}
Thereby, the maximum principle from Theorem \ref{thm:max} can be extended:
\begin{corollary} Let the Assumption \ref{set3} hold.
Then under the CFL condition \eqref{eq:CFLmaxNT} there exist $n^*\in \N$ and $\mathcal{C}_9\geq 0$ depending only on $\omega,\ F,\ S,\ \brho_0,\ M,\ m$ and $N$ such that the approximate solutions of the NT scheme with the slopes \eqref{eq:slopesv2} satisfy
    \[|\rho_j^{k,n}|\leq \max_{k=1,\dots,N}\|\rho_0^k\|_{L^\infty(\R;\R_+)}\exp( \mathcal{C}_9 t^n)\]
    for any $j\in\Z,\ k\in\{1,\dots,N\}$ and $n\in \{0,\dots,n^*\}$.    
\end{corollary}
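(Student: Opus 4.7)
The plan is to follow the induction scheme of the proof of Theorem \ref{thm:max} essentially verbatim, observing that Assumption \ref{set3} implies the flux/source/kernel hypotheses of Assumption \ref{set1} (with $\rho_m=0$) so that Lemma \ref{lem:estimates} and the structure of the recursion on $\max_i\|\brho^n_i\|_\infty$ remain available. The only ingredient that changes is the formula for $\sigma^k$, which enters only through the Taylor predictor \eqref{eq:systemcorrector} inside the flux and source differences appearing in $\rho_{j+1/2}^{k,n+1}$. Hence the task reduces to re-deriving the bound on $\Delta x\,|\sigma^k(\brho_j^n,\bm{R}_j^n)|$ for the new slope \eqref{eq:slopesv2}.

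First I would establish a uniform bound of the form $|\partial_x R_j^{\ell,n}|\leq \mathcal{C}\max_i\|\brho^n_i\|_\infty$ directly from the quadrature \eqref{approximation of R}. The two boundary terms are controlled by $\omega^{\ell,k}(\pm\eta_{1,2}^{\ell,k})$ times the max norm, while the interior sum is bounded by $\Delta x\sum_l |(\omega^{\ell,k})'((l+1)\Delta x)|$ times the max norm, which is a Riemann sum for $\|(\omega^{\ell,k})'\|_{L^1}$; this is finite precisely because Assumption \ref{set3} provides both compact support and $C^1$ regularity of each kernel. (The contributions of the corner corrections $\tfrac{\Delta x}{4}s^{k,n}_{\cdot}$ can be absorbed into a $\Delta x$-term that is discarded at the cost of a larger constant, using the minmod bound $|s_j^{k,n}|\Delta x\leq |\rho_{j+1}^{k,n}-\rho_j^{k,n}|$ and iterating the induction hypothesis for densities.)

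Next I would use this to estimate \eqref{eq:slopesv2}. The minmod contribution satisfies
\[
\Delta x\,\bigl|\mm(\cdot,\cdot)\,V_k(\bm{R}_j^n)\bigr|\leq L_g\|V_k\|_{L^\infty([\rho_m,\rho_M]^m)}\,|\rho_{j+1}^{k,n}-\rho_j^{k,n}|,
\]
where $\|V_k\|_\infty$ is finite because $V_k\in C^2$ on the compact set $[\rho_m,\rho_M]^m$. The derivative-coupling part $g_k(\rho_j^{k,n})\sum_\ell \partial_{R_\ell}V_k(\bm{R}_j^n)\,\partial_x R_j^{\ell,n}$ is bounded by a constant depending on $\|g_k\|_\infty$, $\|\partial_R V_k\|_\infty$, $\omega$, $m$, $N$ times $\max_i\|\brho^n_i\|_\infty$, so its $\Delta x$-multiple is of order $\Delta x\cdot\max_i\|\brho^n_i\|_\infty$. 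Together this yields the same structural bound
\[
\Delta x\,|\sigma^k(\brho_j^n,\bm{R}_j^n)|\leq \widetilde{\mathcal{C}}\,|\rho_{j+1}^{k,n}-\rho_j^{k,n}|+\widetilde{\mathcal{C}}\,\Delta x\max_i\|\brho^n_i\|_\infty
\]
used inside the proof of Theorem \ref{thm:max}, with a possibly larger constant. An analogous verification confirms that Lemma \ref{lem:estimates} continues to hold under Assumption \ref{set3} and the new slope (its proof only invoked $\Delta x|\sigma^k|\leq\text{const}\cdot\max_i\|\brho^n_i\|_\infty$).

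From this point the remainder of the proof of Theorem \ref{thm:max} applies unchanged: the Lipschitz bound on the flux difference $|F_k(\rho^{k,n+1/2}_{j+1},\bm{R}^{n+1/2}_{j+1})-F_k(\rho^{k,n+1/2}_{j},\bm{R}^{n+1/2}_{j})|$ splits into a term $L_F(\lambda L_F+1)|\rho_{j+1}^{k,n}-\rho_j^{k,n}|$ plus $\mathcal{O}(\Delta t)\max_i\|\brho^n_i\|_\infty$, the source contribution is treated identically, and the CFL condition \eqref{eq:CFLmaxNT} forces $\lambda L_F(1+\lambda L_F)\leq 1/4$, which collapses the density-difference terms into $\tfrac{1}{2}(|\rho_j^{k,n}+\rho_{j+1}^{k,n}|+|\rho_j^{k,n}-\rho_{j+1}^{k,n}|)$. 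This yields $|\rho_{j+1/2}^{k,n+1}|\leq(1+\mathcal{C}_9\Delta t)\max_i\|\brho_i^n\|_\infty$, and the discrete Grönwall iteration gives the claimed exponential bound; $n^*$ is defined, as in Theorem \ref{thm:max}, as the largest step for which the densities remain in $[\rho_m,\rho_M]^N$ so that Assumption \ref{set3} (and hence the local Lipschitz constants) are still valid. The main obstacle is the bound on $|\partial_x R_j^{\ell,n}|$ from \eqref{approximation of R}, since this is the only place where the stronger regularity of the kernel is genuinely used; once that is in hand, the argument is a routine re-run of the earlier proof with new constants.
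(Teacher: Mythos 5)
Your proposal is correct and follows essentially the same route as the paper: both reduce the corollary to re-deriving the bound $\Delta x\,|\sigma^k(\brho_j^n,\bm{R}_j^n)|\leq \|V_k\|_\infty L_g\,|\rho_{j+1}^{k,n}-\rho_j^{k,n}|+\mathcal{C}\,\Delta x\max_{i\in\Z}\|\brho_i^n\|_\infty$ for the new slope \eqref{eq:slopesv2} by splitting it into the minmod/$V_k$ part and the $\partial_x R$ part (controlled via $\|\omega^{\ell,k}\|_\infty$ and $\|(\omega^{\ell,k})'\|_\infty$), and then re-running the recursion of Theorem \ref{thm:max}. The one point to make explicit is that the coefficient of $|\rho_{j+1}^{k,n}-\rho_j^{k,n}|$ equals $\|V_k\|_\infty L_g\leq L_F$, which is precisely what keeps the CFL condition \eqref{eq:CFLmaxNT} valid without enlargement; your phrase ``with a possibly larger constant'' should be understood as applying only to the $\Delta x\max_{i\in\Z}\|\brho_i^n\|_\infty$ remainder.
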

\begin{proof}\
From the proof of Theorem \ref{thm:max}, it becomes apparent that the following bound is needed:
$$ \Delta x|\sigma^k(\brho_j^n,\bm{R}_j^n)|\leq \mathcal{C}\left(|\rho_{j+1}^{k,n}+\rho_j^{k,n}|+\Delta x\max_{j\in \Z}\|\brho_j^n\|_\infty\right)$$
Using the definition of the slope in \eqref{eq:slopesv2}, we deduce
\begin{align*}
    \Delta x|\sigma^k(\brho_j^n,\bm{R}_j^n)|\leq&\|V_k\|_\infty L_g|\rho_{j+1}^{k,n}-\rho_j^{k,n}|\\
    &+\Delta x m\max_{\ell=1,\dots,m}\left(\|\partial_\ell V_k\|_\infty [2\|\omega^{\ell,k}\|_\infty+(\eta_1^{\ell,k}+\eta_2^{\ell,k})\|(\omega^{\ell,k})'\|_\infty]\right)\max_{i\in \Z}\|\brho_i^n\|_\infty g(\rho_{j}^{k,n}).
    \end{align*}
In addition, we note that for all $k\in \{1,\dots,N\}:\ \|V_k\|_\infty L_g\leq L_F$, such that the estimate can be obtained and the CFL condition \eqref{eq:CFLmaxNT} remains valid.
\end{proof}   
As long as the approximate solutions remain bounded, i.e. as long as the flux and source terms remain Lipschitz continuous, we can derive the following estimate on the total variation of each time step. The bounded variation estimate for a sufficiently small time follows immediately.
\begin{proposition}\label{BV estiamte}
Let the Assumption \ref{set3} hold. Then under the CFL condition \eqref{eq:CFLmaxNT} there exists $n^*\in \N$ such that the approximate solutions of the NT scheme with the slopes \eqref{eq:slopesv2} satisfy
$$\sum_{k=1}^N\sum_{j\in\Z}|\rho_{j+1}^{k,n+1}-\rho_{j}^{k,n+1}|\leq (1+\mathcal{C}_{10}\Delta t)\sum_{k=1}^N\sum_{j\in\Z}|\rho_{j+1}^{k,n}-\rho_{j}^{k,n}|$$
for $n\in \{0,\dots,n^*\}$ and $\mathcal{C}_{10}$ independent of $\Delta x,\ \Delta t$ defined below. 
\end{proposition}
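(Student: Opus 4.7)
The plan is to split the non-staggered update \eqref{eq:system staggered_final} into two substeps, following the Jiang--Tadmor reduction: first, the predictor-corrector substep \eqref{eq:system predictor} producing the staggered averages $\rho_{j+1/2}^{k,n+1}$; second, the projection back to the non-staggered grid via cell-averaging of the minmod-reconstructed piecewise-linear interpolant \eqref{eq:system_poly_staggered}--\eqref{eq:system min_staggerd}. Since the minmod-based projection is well known to be TVD (the $L^2$ projection onto piecewise constants of a slope-limited reconstruction does not increase total variation), one has
\[
\sum_{k,j}|\rho_{j+1}^{k,n+1}-\rho_j^{k,n+1}|\;\leq\;\sum_{k,j}|\rho_{j+3/2}^{k,n+1}-\rho_{j+1/2}^{k,n+1}|,
\]
so the whole effort is to control the TV of the staggered averages by the TV of $\brho^n$.

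For the staggered substep I would take the difference of \eqref{eq:system predictor} at consecutive half-integer indices and split it into three groups: (i) the spatial average together with the minmod slope correction $\tfrac{\Delta x}{8}(s_j^{k,n}-s_{j+1}^{k,n})$, (ii) the flux difference $\lambda[F_k(\rho_{j+1}^{k,n+1/2},\bm R_{j+1}^{n+1/2})-F_k(\rho_j^{k,n+1/2},\bm R_j^{n+1/2})]$, and (iii) the trapezoidal source contribution. Group (i) is handled by the classical Nessyahu--Tadmor argument: the modified Lax--Wendroff-type combination $\tfrac12(\rho_j^{k,n}+\rho_{j+1}^{k,n})+\tfrac{\Delta x}{8}(s_j^{k,n}-s_{j+1}^{k,n})$ is BV-compatible under the CFL \eqref{eq:CFLmaxNT}, because the minmod slopes are dominated by the adjacent differences of the averages. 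For (ii) and (iii) the Lipschitz continuity of $F_k$ and $S_k$ reduces matters to estimating $\sum_j|\rho_{j+1}^{k,n+1/2}-\rho_j^{k,n+1/2}|$ and $\sum_j\|\bm R_{j+1}^{n+1/2}-\bm R_j^{n+1/2}\|$.

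These predictor TV estimates are the technical heart. From \eqref{eq:systemcorrector}, $|\rho_{j+1}^{k,n+1/2}-\rho_j^{k,n+1/2}|$ splits into $|\rho_{j+1}^{k,n}-\rho_j^{k,n}|$, a source difference, and a slope difference $\tfrac{\Delta t}{2}|\sigma^k(\brho_{j+1}^n,\bm R_{j+1}^n)-\sigma^k(\brho_j^n,\bm R_j^n)|$. Thanks to the refined slope \eqref{eq:slopesv2} (enabled by the $C^1$-regularity of the kernel from Assumption \ref{set3}), $\sigma^k$ is a Lipschitz function of the cell values of $\brho^n$ and of $\partial_x R_j^{\ell,n}$, and both have bounded discrete TV by a discrete Young-type inequality. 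Indeed, since each $\omega^{\ell,k}$ and $(\omega^{\ell,k})'$ lie in $L^1\cap BV$ with compact support, writing out \eqref{approximation of R} gives telescoping arguments on the quadrature weights that yield
\[
\sum_j|\partial_x R_{j+1}^{\ell,n}-\partial_x R_j^{\ell,n}|\;\leq\; C_\omega \sum_k\sum_j|\rho_{j+1}^{k,n}-\rho_j^{k,n}|,
\]
and an analogous estimate for $\sum_j\|\bm R_{j+1}^n-\bm R_j^n\|$ using the quadrature in \eqref{eq:RNT}, then for $\sum_j\|\bm R_{j+1}^{n+1/2}-\bm R_j^{n+1/2}\|$ by invoking \eqref{eq:system Rt} and the already-controlled differences of $S_k$ and $\sigma^k$. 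Assembling these bounds produces the staggered estimate with some $\tilde{\mathcal{C}}$, which together with the TVD projection yields the stated inequality with $\mathcal{C}_{10}$ depending only on $\omega$, $L_F$, $L_S$, $L_g$, the $L^\infty$ bound from Theorem \ref{thm:max}, and $m,N$.

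The main obstacle is the bookkeeping of cross-species and nonlocal coupling: every difference of a nonlocal quantity across adjacent cells must be absorbed into the joint TV $\sum_{k,j}|\rho_{j+1}^{k,n}-\rho_j^{k,n}|$ via discrete Young inequalities, keeping careful track of the constants $\|\omega^{\ell,k}\|_{L^1}$, $|\omega^{\ell,k}|_{BV}$, and $\|(\omega^{\ell,k})'\|_{L^1}$ across all $\ell,k$. A secondary subtlety is the treatment of $\sum_j|s_{j+1}^{k,n}-s_j^{k,n}|$ generated by group (i): this must be controlled by the second differences of $\brho^n$, which are in turn estimated by the first-order differences using the minmod bound $|s_j^{k,n}|\Delta x \leq \min(|\rho_{j+1}^{k,n}-\rho_j^{k,n}|,|\rho_j^{k,n}-\rho_{j-1}^{k,n}|)$, so that the total contribution is absorbed into the right-hand side without generating an $O(1)$ factor.
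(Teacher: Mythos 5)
Your overall skeleton matches the paper's: reduce to the staggered averages via the TV-diminishing projection, then estimate the staggered TV, controlling all nonlocal differences by the joint total variation $\sum_{k,j}|\Delta\rho_j^{k,n}|$ through discrete Young-type inequalities on the quadrature weights. However, the way you propose to close the staggered estimate has a genuine gap. You separate the flux difference (your group (ii)) from the averaging-plus-slope part (group (i)) and claim that Lipschitz continuity of $F_k$ "reduces matters to estimating $\sum_j|\Delta\rho_j^{k,n+1/2}|$ and $\sum_j\|\Delta\bm R_j^{n+1/2}\|$". Both of these sums are bounded by a constant (independent of $\Delta x$) times $\sum_{k,j}|\Delta\rho_j^{k,n}|$, so this route yields a bound of the form $(1+c)\sum|\Delta\rho_j^{k,n}|$ with $c=O(\lambda L_F)=O(1)$, not $(1+\mathcal{C}_{10}\Delta t)$. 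The same problem occurs already in group (i) alone: a triangle-inequality treatment of $\frac{\Delta x}{8}(s_j^{k,n}-s_{j+1}^{k,n})$ costs an extra $O(1)$ fraction of the TV. The cure — and the actual content of the paper's proof — is to write the local part of the flux difference together with the average and the slope terms in Harten incremental form, $(1/2-a_j^{k,n})\Delta\rho_j^{k,n}+(1/2+a_{j-1}^{k,n})\Delta\rho_{j-1}^{k,n}$, and to verify $|a_j^{k,n}|\leq \lambda L_F(1+\lambda L_F)+1/8\leq 3/8<1/2$ under the CFL \eqref{eq:CFLmaxNT}, so that the two coefficients are nonnegative and sum exactly to one. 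Only the genuinely nonlocal residual of the flux, the source contributions, and the half-step corrections may be peeled off as perturbations.

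A second, related gap concerns that nonlocal residual, i.e.\ the terms $\lambda[F_k(\rho_j^{k,n+1/2},\bm R_j^{n+1/2})-F_k(\rho_j^{k,n+1/2},\bm R_{j+1}^{n+1/2})+\dots]$ in \eqref{eq:lastterm}. Bounding these by $\lambda L_F\sum_j\|\Delta\bm R_j^{n+1/2}\|$ again produces an $O(1)$ multiple of the TV, since $\sum_j\|\Delta\bm R_j\|\lesssim\sum_{k,j}|\Delta\rho_j^{k}|$ without any factor $\Delta x$. To obtain the factor $\Delta t=\lambda\Delta x$ the paper must extract an additional $\Delta x$: it applies the mean value theorem to $V_k$, uses the pointwise smallness $\|\Delta\bm R_j^{n+1/2}\|\leq\mathcal{C}\Delta x$, the $C^2$ regularity of $V_k$, and crucially a \emph{second-order} difference estimate $\sum_j|R_{j+1}^{\ell,k}-2R_j^{\ell,k}+R_{j-1}^{\ell,k}|\leq\mathcal{C}\Delta x\sum_j|\Delta\rho_j^{k}|$ (and its analogue for $R_t$), proved by summation by parts against $(\omega^{\ell,k})'$. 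Your sketch only invokes first-order difference bounds on $\bm R$ and $\partial_x R$, which is insufficient for this term; this is precisely where the strengthened regularity of Assumption \ref{set3} enters, and it is missing from your argument.
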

\begin{proof}\
We note, that it is sufficient to consider the difference of the non-staggered version.
Since $\rho_j^{k,n+1}$ is obtained by a projection of a piecewise-linear interpolant through the staggered cell averages, we can use the triangle inequality to estimate the total variation from above by the total variation of the linear interpolant. The linear interpolation does not change the total variation, see \cite[Lemma~3.1, Chapter~4]{godlewski1991hyperbolic}, such that we obtain for every $k\in\{1,\dots,N\}$
\begin{align}
    \sum_{j\in\mathbb{Z}}|\rho_{j+1}^{k,n+1}-\rho_j^{k,n+1}|\leq\sum_{j\in\mathbb{Z}}|\rho_{j+1/2}^{k,n+1}-\rho_{j-1/2}^{k,n+1}|.
\end{align} 
        Before the projection the scheme is given by
        \begin{align*}
    \rho_{j+1/2}^{k,n+1}=&\frac{\rho_j^{k,n}+\rho_{j+1}^{k,n}}{2}+\frac{\Delta x}{8}(s_j^{k,n}-s_{j+1}^{k,n})-\lambda \bigg[F_k(\rho_{j+1}^{k,n+1/2},\bm{R}_{j+1}^{n+1/2})-F_k(\rho_{j}^{k,n+1/2},\bm{R}_j^{n+1/2})\bigg]+\frac{\Delta t}{2} \tilde{S}_{k,j+1/2}
 \end{align*}
 with $ \tilde{S}_{k,j+1/2}\coloneq S_k(\brho^{n+1/2}_{j+1}, \bm{R}^{n+1/2}_{j+1})+S_k(\brho^{n+1/2}_{j}, \bm{R}^{n+1/2}_{j})
 $. We introduce the notation $\Delta \rho_j\coloneq \rho_{j+1}-\rho_j$ and obtain
   \allowdisplaybreaks
   \begin{align}
  \nonumber &\Delta\rho_{j-1/2}^{k,n+1}\\
        \nonumber=&\frac{\Delta \rho_j^{k,n}}{2}+\frac{\Delta \rho_{j-1}^{k,n}}{2}+\frac{\Delta x \Delta s_{j-1}^{k,n}}{8}-\frac{\Delta x \Delta s_{j}^{k,n}}{8}
        +\lambda\left[F_k(\rho_j^{k,n+1/2},\bm{R}_j^{n+1/2})-F_k(\rho_{j-1}^{k,n+1/2},\bm{R}_{j-1}^{n+1/2})\right]\\
        \nonumber&-\lambda\left[F_k(\rho_{j+1}^{k,n+1/2},\bm{R}_{j+1}^{n+1/2})-F_k(\rho_{j}^{k,n+1/2},\bm{R}_{j}^{n+1/2})\right] +\frac{\Delta t}{2}\Delta\tilde{S}_{k,j-1/2}\\
        \nonumber=&\bigg[1/2-\lambda\bigg(\frac{F_k(\rho_{j+1}^{k,n+1/2},\bm{R}_{j+1}^{n+1/2})-F_k(\rho_{j}^{k,n+1/2},\bm{R}_{j}^{n+1/2})}{\Delta \rho_{j}^{k,n+1/2}} \frac{\Delta \rho_{j}^{k,n+1/2}}{\Delta \rho_{j}^{k,n}}
        +\frac{\Delta x\Delta s_{j}^{k,n}}{8\lambda\Delta \rho_{j}^{k,n}}\bigg)\bigg]\Delta \rho_{j}^{k,n} \\
        \nonumber&+\bigg[1/2+\lambda\bigg(\frac{F_k(\rho_{j}^{k,n+1/2},\bm{R}_{j}^{n+1/2})-F_k(\rho_{j-1}^{k,n+1/2},\bm{R}_{j-1}^{n+1/2})}{\Delta \rho_{j-1}^{k,n+1/2}} \frac{\Delta \rho_{j-1}^{k,n+1/2}}{\Delta \rho_{j-1}^{k,n}}+\frac{\Delta x\Delta s_{j-1}^{k,n}}{8\lambda\Delta \rho_{j-1}^{k,n}}\bigg)\bigg]\Delta \rho_{j-1}^{k,n}\\
        \nonumber&+\frac{\Delta t}{2}\Delta\tilde{S}_{k,j-1/2}\\
        \nonumber=&\bigg[1/2-\lambda\bigg(\frac{F_k(\rho_{j+1}^{k,n+1/2},\bm{R}_{j+1}^{n+1/2})- F_k(\rho_j^{k,n+1/2},\bm{R}_{j+1}^{n+1/2})}{\Delta \rho_{j}^{k,n+1/2}} \frac{\Delta \rho_{j}^{k,n}-\frac{\Delta t\Delta \sigma_j^{k,g}}{2}}{\Delta \rho_{j}^{k,n}}+\frac{\Delta x\Delta s_{j}^{k,n}}{8\lambda\Delta \rho_{j}^{k,n}}\bigg)\bigg]\Delta \rho_{j}^{k,n} \\
        \nonumber&+\bigg[1/2+\lambda\bigg(\frac{F_k(\rho_{j}^{k,n+1/2},\bm{R}_{j}^{n+1/2})- F_k(\rho_{j-1}^{k,n+1/2},\bm{R}_{j}^{n+1/2})}{\Delta \rho_{j-1}^{k,n+1/2}} \frac{\Delta \rho_{j-1}^{k,n}-\frac{\Delta t\Delta \sigma_{j-1}^{k,g}}{2}}{\Delta \rho_{j-1}^{k,n}}+\frac{\Delta x\Delta s_{j-1}^{k,n}}{8\lambda\Delta \rho_{j-1}^{k,n}}\bigg)\bigg]\Delta \rho_{j-1}^{k,n}\\
        \nonumber&+\frac{\Delta t}{2}\Delta\tilde{S}_{k,j-1/2}\\
        \nonumber&+\lambda\left[F_k(\rho_j^{k,n+1/2},\bm{R}_j^{n+1/2})-F_k(\rho_j^{k,n+1/2},\bm{R}_{j+1}^{n+1/2})+F_k(\rho_{j-1}^{k,n+1/2},\bm{R}_{j}^{n+1/2})-F_k(\rho_{j-1}^{k,n+1/2},\bm{R}_{j-1}^{n+1/2} )\right]\\
        \nonumber&-\frac{\lambda\Delta t}{2}\frac{F_k(\rho_{j+1}^{k,n+1/2},\bm{R}_{j}^{n+1/2})-F_k(\rho_{j}^{k,n+1/2},\bm{R}_{j}^{n+1/2})}{\Delta \rho_j^{k,n+1/2}}\bigg(S_k(\brho_{j+1}^n,\bm{R}_{j+1}^n)-S_k(\brho_{j}^n,\bm{R}_{j}^n)-\Delta A_j^{k,n}\bigg)\\
        \nonumber&+\frac{\lambda\Delta t}{2}\frac{F_k(\rho_{j}^{k,n+1/2},\bm{R}_{j-1}^{n+1/2})-F_k(\rho_{j-1}^{k,n+1/2},\bm{R}_{j-1}^{n+1/2})}{\Delta \rho_j^{k,n+1/2}}\bigg(S_k(\brho_{j}^n,\bm{R}_{j}^n)-S_k(\brho_{j-1}^n,\bm{R}_{j-1}^n)-\Delta A_{j-1}^{k,n}\bigg)\\
        \nonumber=&(1/2-a_j^{k,n})\Delta \rho_j^{k,n}+(1/2+a_{j-1}^{k,n})\Delta \rho_{j-1}^{k,n}\\
       \label{eq:diffSource} &+\frac{\Delta t}{2}\Delta\tilde{S}_{k,j-1/2}\\
        \label{eq:lastterm}&+\lambda\left[F_k(\rho_j^{k,n+1/2},\bm{R}_j^{n+1/2})-F_k(\rho_j^{k,n+1/2},\bm{R}_{j+1}^{n+1/2})+F_k(\rho_{j-1}^{k,n+1/2},\bm{R}_{j}^{n+1/2})-F_k(\rho_{j-1}^{k,n+1/2},\bm{R}_{j-1}^{n+1/2} )\right]\\
      \label{eq:term1}  &-\frac{\lambda\Delta t}{2}\frac{F_k(\rho_{j+1}^{k,n+1/2},\bm{R}_{j}^{n+1/2})-F_k(\rho_{j}^{k,n+1/2},\bm{R}_{j}^{n+1/2})}{\Delta \rho_j^{k,n+1/2}}\bigg(S_k(\brho_{j+1}^n,\bm{R}_{j+1}^n)-S_k(\brho_{j}^n,\bm{R}_{j}^n)-\Delta A_j^{k,n}\bigg)\\
      \label{eq:term2}  &+\frac{\lambda\Delta t}{2}\frac{F_k(\rho_{j}^{k,n+1/2},\bm{R}_{j-1}^{n+1/2})-F_k(\rho_{j-1}^{k,n+1/2},\bm{R}_{j-1}^{n+1/2})}{\Delta \rho_j^{k,n+1/2}}\bigg(S_k(\brho_{j}^n,\bm{R}_{j}^n)-S_k(\brho_{j-1}^n,\bm{R}_{j-1}^n)-\Delta A_{j-1}^{k,n}\bigg)
        \end{align}
During the calculations we used the following notations:
\begin{align*}
    \sigma_j^{k,g}&\coloneq \mm\left(\frac{g(\rho_{j+1}^{k,n})-g(\rho_{j}^{k,n})}{\Delta x},\frac{g(\rho_{j}^{k,n})-g(\rho_{j-1}^{k,n})}{\Delta x}\right) V_k(\bm{R}_j^n)\\
    A_j^{k,n}&\coloneq g_k(\rho_j^{k,n})\sum_{\ell=1}^{m}\partial_{R_\ell}V_k(\bm{R}_j^n)\partial_x\bm{R}_{j}^{\ell,n}\\
    a_{j}^{k,n}&\coloneq \lambda\bigg(\frac{F_k(\rho_{j+1}^{k,n+1/2},\bm{R}_{j+1}^{n+1/2})- F_k(\rho_j^{k,n+1/2},\bm{R}_{j+1}^{n+1/2})}{\Delta \rho_{j}^{k,n+1/2}} \frac{\Delta \rho_{j}^{k,n}-\frac{\Delta t\Delta \sigma_j^{k,g}}{2}}{\Delta \rho_{j}^{k,n}}+\frac{\Delta x\Delta s_{j}^{k,n}}{8\lambda\Delta \rho_{j}^{k,n}}\bigg)
\end{align*}
Note that for the difference in the slopes, we obtain $\Delta x|\Delta s_j^{k,n}|\leq|\Delta \rho_j^{k,n}|$ and we also get $\Delta x|\Delta \sigma_j^{k,g}|\leq 2\|V_k\|_\infty L_g|\Delta \rho_j^{k,n}|$, such that under the CFL condition \eqref{eq:CFLmaxNT}
        $$|a_j^{k,n}|\leq\lambda L_g\|V_k\|_\infty(1+\lambda\|V_k\|_\infty L_g)+1/8\leq \lambda L_F(1+\lambda L_F)+1/8\leq 3/8 < 1/2.$$
Let us start with the difference in the sources, i.e. \eqref{eq:diffSource}
\begin{align*}
   \frac{\Delta t}{2}|\Delta\tilde{S}_{k,j-1/2}|&\leq\frac{\Delta t}{2}|S_k(\brho_{j+1}^{n+1/2},\bm{R}_{j+1}^{n+1/2})\pm S_k(\brho_j^{n+1/2},\bm{R}_j^{n+1/2})-S_k(\brho_{j-1}^{n+1/2},\bm{R}_{j-1}^{n+1/2})|\\
   &\leq \frac{\Delta t}{2}L_{S} \sum_ {\tilde k=1}^{N} \bigg[|\Delta\rho_j^{\tilde k,n+1/2}|+\sum_{\ell=1}^m|\Delta R_{j}^{\ell,\tilde k,n+1/2}|+|\Delta\rho_{j-1}^{\tilde k,n+1/2}|+\sum_{\ell=1}^m|\Delta R_{j-1}^{\ell,\tilde k,n+1/2}|\bigg]
\end{align*}
To estimate this, we consider, for fixed $k$,
\begin{align*}
    |\Delta\rho_j^{k,n+1/2}|&\leq \Delta\rho_j^{k,n}+\frac{\Delta t|\Delta \sigma_j^{k,g}|}{2}+\frac{\Delta t|\Delta A_j^{k,n}|}{2}+\frac{\Delta t}{2}\bigg|S_k(\brho_{j+1}^{n},\bm{R}_{j+1}^{n})-S_k(\brho_j^{n},\bm{R}_{j}^{n})\bigg|
\end{align*}
We recall that we have $\Delta t|\sigma_j^{k,g}|\leq 2\lambda \|V_k\|_\infty L_g|\Delta \rho_j^{k,n}|$. In addition, we get
$$|S_k(\brho_{j+1}^{k,n},\bm{R}_{j+1}^{n})-S_k(\brho_j^{k,n},\bm{R}_{j}^{n})|\leq L_{S}\bigg(\sum_{\tilde k=1}^N|\Delta \rho_{j}^{\tilde k,n}|+\sum_{\ell=1}^{m}|\Delta R_j^{\ell,\tilde k,n}|\bigg),$$
where $|\Delta R_j^{\ell,k,n}|\leq \Delta x \|\omega^{\ell,k}\|_\infty \sum_{l=-N_1^{\ell,k}}^{N_2^{\ell,k}-1}|\Delta\rho_{j+l}^{k,n}|$ follows immediately.
It remains to bound $\Delta A_j^{k,n}$
\begin{small}
\begin{align*}
    |\Delta A_j^{k,n}|&\leq L_g |\Delta\rho_j^{k,n}| \sum_{\ell=1}^m \|\partial_\ell V_k\|_\infty \sum_{\tilde k=1}^N \left( 2\|\omega^{\ell,\tilde k}\|_\infty+(\eta_1^{\ell,\tilde k}+\eta_2^{\ell,\tilde k})\|(\omega^{\ell,\tilde k})'\|_\infty\right)\max_{i\in\Z} |\rho_i^{\tilde k,n}| \\
    &+\|g_k\|_\infty \sum_{\ell=1}^m  \left( \sum_{K=1}^N \left( 2\|\omega^{\ell,K}\|_\infty+(\eta_1^{\ell,K}+\eta_2^{\ell,K})\|(\omega^{\ell,K})'\|_\infty\right)\max_{i\in\Z} |\rho_i^{K,n}| \right)\sum_{L=1}^m \|\partial^2_{\ell L}V_k\|_\infty \sum_{\tilde{k}=1}^{N}|\Delta R_{j}^{L,\tilde{k},n}|\\
    &+\|g_k\|_\infty\sum_{\ell=1}^{m}\|\partial_{e}V_k\|_\infty \sum_{\tilde{k}=1}^N \bigg(\omega^{\ell,\tilde k}(-\eta_1^{\ell,\tilde{k}})|\Delta\rho_{j-N_1^{\ell,\tilde k}}|+\omega^{\ell,\tilde k}(\eta_2^{\ell,\tilde{k}})|\Delta\rho_{j+N_2^{\ell,\tilde k}}|+\Delta x\|(\omega^{\ell,\tilde k})'\|_\infty\sum_{l=-N_1^{\ell,\tilde k}}^{N_2^{\ell,\tilde k}-1}|\Delta \rho_{j+l}^{\tilde k}|\bigg)
    \end{align*}     
\end{small}
Summing over $j$ and $k$ it becomes apparent that
$$ \sum_{k=1}^N \sum_{j\in\Z}|\Delta A_j^{k,n}| \leq \mathcal{C}_{11} \sum_{k=1}^N \sum_{j\in\Z}|\Delta \rho_j^{k,n}|$$
with $\mathcal{C}_{11}$ depending on $N,m$, the kernel function, its support, the flux and the $L^\infty$ bound on $\rho$.
Combining this with the estimates discussed above, we deduce that there exists $\mathcal{C}_{12}$ such that
\begin{align*}
\sum_{k=1}^N\sum_{j\in\mathbb{Z}}|\Delta \rho_j^{k,n+1/2}|&\leq\mathcal{C}_{12}\sum_{k=1}^{N}\sum_{j\in\mathbb{Z}}|\Delta \rho_j^{k,n}|
\end{align*}
Since we are still estimating \eqref{eq:diffSource}, we need to estimate the nonlocal terms after the half time step, too. Hence, we estimate
$$|\Delta R_j^{\ell,k,n+1/2}|\leq |\Delta R_{j}^{\ell,k,n}|+\frac{\Delta t}{2}|\Delta (R_{t})_j^{\ell,k,n}|.$$
As above the first term fulfills $|\Delta R_j^{\ell,k,n}|\leq \Delta x \|\omega^{\ell,k}\|_\infty \sum_{l=-N_1^{\ell,k}}^{N_2^{\ell,k}-1}|\Delta\rho_{j+l}^{k,n}|$. To bound $|\Delta (R_{t})_j^{\ell,k,n}|,$ we can proceed similarly as above, since by its definition in \eqref{eq:system Rt}, we obtain differences of $A_j^k$, $\sigma_j^{k,g}$ and the source term at the time level $t^n$. Hence, using the estimates established above, we deduce
$$\sum_{k=1}^N\sum_{j\in \mathbb{Z}}\sum_{\ell=1}^{m}|\Delta R_j^{\ell,k,n+1/2}|\leq \mathcal{C}_{13}\sum_{k=1}^{N}\sum_{j\in\mathbb{Z}}|\Delta\rho_j^n|$$
and collecting all the bounds we can finally bound \eqref{eq:diffSource} 
$$\sum_{k=1}^{N}\sum_{j\in\mathbb{Z}}|\Delta\tilde{S}_{k,j-1/2}|\leq 2L_S(\mathcal{C}_{12}+\mathcal{C}_{13})\sum_{k=1}^{N}\sum_{j\in\mathbb{Z}}|\Delta \rho_j^{k,n}|.$$
The terms \eqref{eq:term1} and \eqref{eq:term2} can be estimated in the same way. For \eqref{eq:term1}, we obtain immediately,
\begin{align*}
   &\frac{\lambda \Delta t}{2} \left|\frac{F_k(\rho_{j+1}^{k,n+1/2},\bm{R}_{j}^{n+1/2})-F_k(\rho_{j}^{k,n+1/2},\bm{R}_{j}^{n+1/2})}{\Delta \rho_j^{k,n+1/2}}\bigg(S_k(\brho_{j+1}^n,\bm{R}_{j+1}^n)-S_k(\brho_{j}^n,\bm{R}_{j}^n)-\Delta A_j^{k,n}\bigg)\right|\\
   &\leq  \frac{\lambda \Delta t}{2} \|V_k\|_\infty L_g \left(|S_k(\brho_{j+1}^n,\bm{R}_{j+1})-S_{k}(\brho_j^n,\bm{R}_j^n)|+|\Delta A_j^{k,n}|\right)
   \end{align*}
Using the estimates established above we get that there exists $\mathcal{C}_{14}$ such that
\begin{align*}
\sum_{k=1}^{N}\sum_{j\in \mathbb{Z}} |\eqref{eq:term2}|=\sum_{k=1}^{N}\sum_{j\in \mathbb{Z}} |\eqref{eq:term1}|  &\leq \mathcal{C}_{14} \Delta t \sum_{k=1}^{N}\sum_{j\in \mathbb{Z}}|\Delta \rho_j^{k,n}|
\end{align*}
Now, we concentrate on \eqref{eq:lastterm} and use the mean value theorem to obtain the following
\begin{align}
   \nonumber \eqref{eq:lastterm}=&\lambda\bigg[-g_{k}(\rho_{j}^{k,n+1/2})\sum_{\ell=1}^m\partial_{\ell}V_k(\bm{\xi}_j^{k,n})\sum_{\tilde{k}=1}^{N}\Delta R_{j}^{\ell,\tilde{k},n+1/2}
   +g_k(\rho_{j-1}^{k,n+1/2})\sum_{\ell=1}^{m}\partial_{\ell}V_k(\bm{\xi}_{j-1}^{k,n})\sum_{\tilde{k}=1}^{N}\Delta R_{j-1}^{\ell,\tilde{k},n+1/2}\bigg]\\
   \nonumber =&\lambda \bigg[ -g_k(\rho_j^{k,n+1/2})\sum_{\ell=1}^m \bigg(\partial_\ell V_k(\bm{\xi}_j^{k,n})\sum_{\tilde{k}=1}^N \Delta R_j^{\ell,\tilde{k},n+1/2}+\partial_{\ell}V_k(\bm{\xi}_{j-1}^{k,n})\sum_{\tilde{k}=1}^N \Delta R_{j-1}^{\ell,\tilde{k},n+1/2}\bigg)\\
   \nonumber &-\left(g_{k}(\rho_j^{k,n+1/2})-g(\rho_{j-1}^{k,n+1/2})\right)\sum_{\ell=1}^{m}\partial_\ell V_k(\bm{\xi}_{j-1}^n)\sum_{\tilde{k}=1}^N \Delta R_{j-1}^{\ell,\tilde{k},n+1/2}\\
    \label{eq:lastterm1}=&-\lambda g_k(\rho_j^{k,n+1/2})\sum_{\ell=1}^m\langle \nabla \left( \partial_{\ell }V_k(\bm{\theta}_{j-1}^{\ell,k,n})\right), (\bm{\xi}_j^{k,n}-\bm{\xi}_{j-1}^{k,n})\rangle \sum_{\tilde{k}=1}^{N} \Delta R_j^{\tilde k, \ell, n+1/2}\\
    \label{eq:lastterm2}&+\lambda g_{k}(\rho_j^{k,n+1/2})\sum_{\ell=1}^{m}\partial_{\ell}V_k(\bm{\xi}_{j-1}^{k,n})\sum_{\tilde{k}=1}^N (-R_{j-1}^{\ell,\tilde{k},n+1/2}+2R_{j}^{\ell,\tilde{k},n+1/2}-R_{j+1}^{\ell,\tilde{k},n+1/2})\\
    \label{eq:lastterm3}&-\lambda \left(g_{k}(\rho_j^{k,n+1/2})-g(\rho_{j-1}^{k,n+1/2})\right)\sum_{\ell=1}^{m}\partial_\ell V_k(\bm{\xi}_{j-1}^{k,n})\sum_{\tilde{k}=1}^n \Delta R_{j-1}^{\ell,\tilde{k},n+1/2}
\end{align}
Here, the terms $\bm{\xi}_j^{k,n}$ and $\bm{\theta}_{j}^{\ell,k,n}$ come from the mean value theorem. In particular, $\bm{\xi}_j^{k,n}=(1-\kappa) \bm{R}_j^{n+1/2}+\kappa \bm{R}_{j+1}^{n+1/2}$ for $\kappa \in(0,1) $ such that $\langle \nabla V_k(\bm{\xi}_j^{k,n}),\bm{R}_j^{n+1/2}-\bm{R}_{j+1}^{n+1/2}\rangle=V_k(\bm{R}_j^{n+1/2})- V_k(\bm{R}_{j+1}^{n+1/2})$. Now, we estimate the terms separately 
\begin{align*}
    |\eqref{eq:lastterm3}|&\leq \lambda L_g|\Delta \rho_{j-1}^{k,n+1/2}|\sum_{\ell=1}^{m}|\partial_\ell V_k(\bm{\xi}_{j-1}^{k,n})|\sum_{\tilde{k}=1}^N|\Delta R_{j-1}^{\ell,\tilde{k},n+1/2}|
\end{align*}
Recall that there exists a $\mathcal{C}>0$ such that $|\Delta R_{j-1}^{\ell,\tilde{k},n+1/2}|\leq \mathcal{C} \Delta x$\footnote{We can follow here the same steps as in the proof Lemma \ref{lem:estimates}.} and that we have already proven $\sum_{k=1}^N \sum_{j\in \Z} |\Delta \rho_{j}^{k,n+1/2}| \leq \mathcal{C}_{12} \sum_{k=1}^N \sum_{j\in \Z} |\Delta \rho_{j}^{k,n}|$. Hence, we obtain
$$  \sum_{k=1}^N \sum_{j\in \Z} |\eqref{eq:lastterm3}|\leq \lambda L_gNm\mathcal{C}_{12}\mathcal{C} \Delta x \sum_{k=1}^N \max_{\ell=1,\dots,m}\|\partial_\ell V_k\|_\infty \sum_{j\in \Z} |\Delta \rho_{j}^{k,n}|$$
To estimate \eqref{eq:lastterm1}, we first note that due to the definition of $\bm{\xi}_j^{k,n}$
\begin{align*}
    |\Delta\bm{\xi}_{j-1}^{k,n}|\leq |\bm{\xi}_j^{k,n}-\bm{R}_j^{n+1/2}|+|\bm{R}_j^{n+1/2}-\bm{\xi}_{j-1}^{k,n}|\leq |\bm{R}_{j+1}^{n+1/2}-\bm{R}_j^{n+1/2}|+|\bm{R}_j^{n+1/2}-\bm{R}_{j-1}^{n+1/2}|, 
\end{align*}
where the absolute value and inequalities are applied to each entry of the vector. Using the differentiability of $V_k$ and $|\Delta R_{j}^{\ell,k,n+1/2}|\leq \mathcal{C} \Delta x$ it becomes apparent that $\langle \nabla \left( \partial_{\ell }V_k(\bm{\theta}_{j-1}^{\ell,k,n})\right), (\bm{\xi}_j^{k,n}-\bm{\xi}_{j-1}^{k,n})\rangle$ can be bounded by a constant times $\Delta x$ and since we have already proven \newline $\sum_{k=1}^N\sum_{j\in \mathbb{Z}}\sum_{\ell=1}^{m}|\Delta R_j^{\ell,k,n+1/2}|\leq \mathcal{C}_{13}\sum_{k=1}^{N}\sum_{j\in\mathbb{Z}}|\Delta\rho_j^{k,n}|$, we obtain
\begin{align*}
\sum_{k=1}^N\sum_{j\in \mathbb{Z}}\sum_{\ell=1}^{m}|\eqref{eq:lastterm1}|\leq 2\lambda \mathcal{C}_{13} \mathcal{C}\Delta x\sum_{k=1}^{N}  \| g_k\|_\infty \max_{\ell=1,\dots m} \| \nabla (\partial_\ell V_k)\|_\infty \sum_{j\in\mathbb{Z}}|\Delta\rho_j^{k,n}|
    \end{align*}
To estimate \eqref{eq:lastterm2} we need to find an appropriate bound on the second-order differences of the nonlocal terms. Obviously, we can estimate
\begin{align*}
|R_{j+1}^{\ell,\tilde{k},n+1/2}-2R_j^{\ell,\tilde{k},n+1/2}+R_{j-1}^{\ell,\tilde{k},n+1/2}|\leq&|R_{j+1}^{\ell,\tilde{k},n}-2R_j^{\ell,\tilde{k},n}+R_{j-1}^{\ell,\tilde{k},n}|+\frac{\Delta t}{2}|(R_t)_{j+1}^{\ell,\tilde{k},n}-2(R_t)_{j}^{\ell,\tilde{k},n}+(R_t)_{j-1}^{\ell,\tilde{k},n}|.
\end{align*}
We note that the second-order difference $(R_t)_{j+1}^{\ell,\tilde{k},n}-2(R_t)_{j}^{\ell,\tilde{k},n}+(R_t)_{j-1}^{\ell,\tilde{k},n}$ depends on differences in $S_k(\brho_j^k,\bm{R}_j^k)$, $A_j^{k,n}$ and $\sigma_j^{k,g}$.
Here, the second-order difference is only necessary to bound the differences of $\sigma_j^{k,g}$ appropriately. Since we have already shown $ \sum_{k=1}^N \sum_{j\in\Z}|\Delta A_j^{k,n}| \leq \mathcal{C}_{11} \sum_{k=1}^N \sum_{j\in\Z}|\Delta \rho_j^{k,n}|$ and a similar estimate for the difference of $S_k(\brho_j^k,\bm{R}_j^k)$, a first-order difference is sufficient to get the desired estimate (by using that all kernel functions have a bounded integral).
To estimate the difference of $\sigma_j^{k,g}$ we use the definition of $(R_t)_j^{\ell,k,n}$ in \eqref{eq:system Rt}, sum over $j$ and use summation by parts
\begin{small}
\begin{align*}
    &\sum_{j\in\Z} \Bigg| \frac{\Delta x}{2}\Delta \sigma_{j-N_1^{\ell,k}}^{k,g,n} \omega\left( \left(\frac{1}{4}-N_1^{\ell,k}\right)\Delta x\right)+\Delta x \sum_{l=-N_1^{\ell,k}}^{N_2^{\ell,k}-2} \Delta \sigma_{j+l+1}^{k,g,n} \omega\left( \left(l+1\right)\Delta x\right) +\frac{\Delta x}{2}\Delta \sigma_{j+N_2^{\ell,k}}^{k,g,n} \omega\left( \left(N_2^{\ell,k}-\frac{1}{4}\right)\Delta x\right)\\
    &-\bigg[\frac{\Delta x}{2}\Delta \sigma_{j-N_1^{\ell,k}-1}^{k,g,n} \omega\left( \left(\frac{1}{4}-N_1^{\ell,k}\right)\Delta x\right)+\Delta x \sum_{l=-N_1^{\ell,k}}^{N_2^{\ell,k}-2} \Delta \sigma_{j+l}^{k,g,n} \omega\left( l\Delta x\right) +\frac{\Delta x}{2}\Delta \sigma_{j+N_2^{\ell,k}-1}^{k,g,n} \omega\left( \left(N_2^{\ell,k}-\frac{1}{4}\right)\Delta x\right)\bigg]\Bigg|\\
    \leq& 4 \|\omega^{\ell,k}\|_\infty \sum_{j\in\Z} \Delta x | \Delta \sigma_j^{k,g,n}| +\Delta x \sum_{j\in\Z} \sum_{l=-N_1^{\ell,k}}^{N_2^{\ell,k}-2} |\sigma_{j+l+1}^{k,g,n} \left( \omega\left( \left(l+1\right)\Delta x\right)-\omega\left( l\Delta x\right)\right)|\\
    \leq & 4 \|\omega^{\ell,k}\|_\infty \sum_{j\in\Z} \Delta x | \Delta \sigma_j^{k,g,n}| +\Delta x \|(\omega^{\ell,k})'\|_\infty \sum_{l=-N_1^{\ell,k}}^{N_2^{\ell,k}-2}  \sum_{j\in\Z}\Delta x | \sigma_{j+l+1}^{k,g,n}|\\
    \leq &(4 \|\omega^{\ell,k}\|_\infty+ ( \eta_1^{\ell,k}+\eta_2^{\ell,k})\|(\omega^{\ell,k})'\|_\infty)\sum_{j\in\Z} \Delta x | \Delta \sigma_j^{k,g,n}|
\end{align*}
\end{small}
Now, we can use $\Delta x|\Delta \sigma_j^{k,g}|\leq 2\|V_k\|_\infty L_g|\Delta \rho_j^{k,n}|$ to obtain the desired estimate
$$ \frac{\Delta t}{2}\sum_{j\in\Z} |(R_t)_{j+1}^{\ell,\tilde{k},n}-2(R_t)_{j}^{\ell,\tilde{k},n}+(R_t)_{j-1}^{\ell,\tilde{k},n}| \leq  \mathcal{C}_{15} \Delta x\sum_{j\in\mathbb{Z}}|\Delta\rho_j^{\tilde k,n}|$$
For the second-order difference $R_{j+1}^{\ell,\tilde{k},n}-2R_j^{\ell,\tilde{k},n}+R_{j-1}^{\ell,\tilde{k},n}$ we can proceed similarly as for $\sigma_g^{k,n}.$ Using summation by parts, we get
\begin{align*}
   & \sum_{j\in \mathbb{Z}}|R_{j+1}^{\ell,\tilde k,n}-2R_j^{\ell,\tilde k,n}+R_{j-1}^{\ell,\tilde k,n}|\leq \mathcal{C}_{16} \Delta x\sum_{j\in\mathbb{Z}}|\Delta\rho_j^{\tilde k,n}|,
\quad \text{
such that}
   & \sum_{j\in \mathbb{Z}}\eqref{eq:lastterm2}\leq (\mathcal{C}_{15}+\mathcal{C}_{16}) \Delta x\sum_{j\in\mathbb{Z}}|\Delta\rho_j^{\tilde k,n}|,
\end{align*}
Finally, using all the estimates obtained above, we deduce
$$\sum_{k=1}^N\sum_{j\in \Z}|\Delta \rho_{j-1/2}^{k,n}|\leq |(1+\mathcal{C}_{10}\Delta t)| \sum_{k=1}^N\sum_{j\in \Z} |\Delta \rho_{j}^{k,n}|$$
    \end{proof}

In addition, the BV estimate allows to prove that the $L^1$ norm stays bounded for a sufficiently small time as well as a time continuity estimate.
\begin{proposition}\label{Prop:timecont}
Let the Assumption \ref{set3} hold. Then under the CFL condition \eqref{eq:CFLmaxNT} there exists $n^*\in \N$ such that the approximate solutions of the NT scheme with the slopes \eqref{eq:slopesv2} satisfy
\begin{align*}
 \Delta x\sum_{k=1}^N \sum_{j\in\Z} |\rho_j^{k,n}|\leq \exp(\mathcal{C}_{17}t^n)\left(\sum_{k=1}^N \|\rho_o^k\|_{L^1(\R)}+\mathcal{C}_{18} t^n\right)\quad \text{and}\quad
   \Delta x\sum_{k=1}^N \sum_{j\in\Z} |\rho_j^{k,n+1}-\rho_j^{k,n}|\leq  \mathcal{C}_{19} \Delta t 
\end{align*}
for $n\in \{0,\dots,n^*\}$
\end{proposition}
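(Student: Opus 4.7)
The plan is to derive both inequalities from the non-staggered update \eqref{eq:system staggered_final} by taking absolute values, summing over $j$ and $k$, multiplying by $\Delta x$, and invoking Proposition \ref{BV estiamte} together with the $L^\infty$ bound from the corollary to control each appearing term uniformly for $n\leq n^*$.

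For the $L^1$ bound, I observe that in \eqref{eq:system staggered_final} the averaging $(\rho_{j-1}^{k,n}+2\rho_j^{k,n}+\rho_{j+1}^{k,n})/4$ is a convex combination, so after summation in $j$ and $k$ its $\Delta x$-weighted $\ell^1$ contribution equals exactly $\Delta x \sum_{j,k} |\rho_j^{k,n}|$. The two slope corrections telescope and, by the minmod property $\Delta x |s_j^{k,n}|\leq |\rho_{j+1}^{k,n}-\rho_j^{k,n}|$ (and the analogous bound for $s_{j+1/2}^{k,n+1}$), are bounded by $\Delta x$ times the total variation, which is controlled uniformly by Proposition \ref{BV estiamte}. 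The flux difference is estimated by the Lipschitz continuity of $F_k$ together with $F_k(0,\cdot)=0$ (valid because Assumption \ref{set3} forces $\rho_m=0$), giving
\[
\tfrac{\Delta t}{2}\sum_j \bigl|F_k(\rho_{j+1}^{k,n+1/2},\bm{R}_{j+1}^{n+1/2})-F_k(\rho_{j-1}^{k,n+1/2},\bm{R}_{j-1}^{n+1/2})\bigr|\leq L_F\Delta t\Bigl(\mathrm{TV}(\rho^{k,n+1/2})+\sum_\ell \mathrm{TV}(R^\ell)\Bigr),
\]
and both total variations are bounded through Proposition \ref{BV estiamte} and a summation-by-parts on the discrete convolution, as in the proof of that proposition. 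The source term is controlled via $|S_k(\brho_j^{n+1/2},\bm{R}_j^{n+1/2})|\leq L_S\|\brho_j^{n+1/2}\|$ (using $S_k(\brho_m,\cdot)=0$), yielding a contribution $L_S \Delta t\,\Delta x\sum_{j,k}\|\brho_j^{n+1/2}\|$.

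To close the argument, I use the predictor formula \eqref{eq:systemcorrector} together with the $L^\infty$ bound and the minmod estimate on $\sigma^k$ to obtain $\Delta x\sum_{j,k}\|\brho_j^{n+1/2}\|\leq \Delta x\sum_{j,k}\|\brho_j^{n}\|+O(\Delta t)$. Setting $A^n = \Delta x\sum_{j,k}|\rho_j^{k,n}|$, everything combines into an inequality of the form
\[
A^{n+1}\leq (1+\mathcal{C}_{17}\Delta t)\,A^n+\widetilde{\mathcal{C}}\,\Delta t,
\]
and a discrete Gronwall argument yields the claimed exponential $L^1$ bound with a suitable constant $\mathcal{C}_{18}$. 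For the time-continuity estimate, I start from $\rho_j^{k,n+1}-\rho_j^{k,n}$ in \eqref{eq:system staggered_final} and rewrite the averaging as $\frac{1}{4}(\Delta\rho_j^{k,n}-\Delta\rho_{j-1}^{k,n})$. Its $\Delta x$-weighted $\ell^1$ norm is bounded by $\tfrac{\Delta x}{2}\mathrm{TV}(\rho^{k,n})$, which is $O(\Delta t)$ once $\lambda=\Delta t/\Delta x$ is fixed under \eqref{eq:CFLmaxNT}. The slope corrections again contribute $O(\Delta x\cdot\mathrm{TV})=O(\Delta t)$, the flux difference contributes $O(\Delta t\cdot\mathrm{TV})$ by the Lipschitz bound above, and the source term contributes $O(\Delta t)\cdot\bigl(\Delta x\sum_{j,k}\|\brho_j^{n+1/2}\|\bigr)=O(\Delta t)$ by the first part of the proposition. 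Summing gives the desired bound with $\mathcal{C}_{19}$ independent of $\Delta x$ and $\Delta t$.

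The main obstacle is the careful bookkeeping of the nonlocal quantities: one must bound the discrete total variations of $\bm{R}^{n+1/2}$ and of $R_t^n$ (which enters through the predictor $\brho^{n+1/2}$) in terms of $\mathrm{TV}(\brho^n)$. This is carried out by summation-by-parts applied to the convolution quadratures \eqref{eq:RNT} and \eqref{eq:system Rt}, exploiting the $C^1$-regularity and compact support of the kernels postulated in Assumption \ref{set3}, and essentially reuses the estimates already performed in the proof of Proposition \ref{BV estiamte}. Once these nonlocal BV bounds are in place, the remaining algebra is routine application of triangle inequalities, minmod estimates, and discrete Gronwall.
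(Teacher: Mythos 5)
Your proposal is correct and follows essentially the same route as the paper: estimate the scheme term by term, control the flux and slope contributions by the total variation bound from Proposition \ref{BV estiamte}, control the source via Lipschitz continuity and $\rho_m=0$ through the half-step values, and close with the discrete Gronwall recursion $A^{n+1}\leq(1+\mathcal{C}_{17}\Delta t)A^n+\widetilde{\mathcal{C}}\Delta t$, from which the time-continuity estimate also follows. Two harmless imprecisions: the convex-combination contribution is $\leq\Delta x\sum_{j,k}|\rho_j^{k,n}|$ by the triangle inequality rather than an exact equality, and the half-step bound $\Delta x\sum_{j,k}\|\brho_j^{n+1/2}\|\leq\Delta x\sum_{j,k}\|\brho_j^{n}\|+O(\Delta t)$ should really carry an additional $O(\Delta t)\cdot A^n$ term from the $\sigma^k$ contribution, but this is absorbed into the same recursion you state.
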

\begin{proof}\
    Let us reuse the notation introduced in \eqref{schemelinear}. For fixed $k\in \{1,,\dots,N\}$ we obtain
    \begin{align*}
       \Delta x\sum_{k=1}^N \sum_{j\in\Z} |\rho_j^{k,n+1}|\leq \Delta x\sum_{k=1}^N \sum_{j\in\Z} |\rho_j^{k,n}| +\Delta t \sum_{k=1}^N \sum_{j\in\Z} \left| G_{j-1/2}^{k,n}-G_{j+1/2}^{k,n} \right|+\Delta t \Delta x \sum_{k=1}^N \sum_{j\in\Z}|S_j^{k,n}|
    \end{align*}
    Using the definition of $G_{j+1/2}^{k,n}$ and the Lipschitz continuity of $g_k$ we can estimate its differences using differences in $\rho_j^{k,n}$, $\rho_j^{k,n+1/2}$, $\rho_{j+1/2}^{k,n+1}$ and $\bm{R}_j^{n+1/2}$. As shown in the proof of Proposition \ref{BV estiamte}, all these differences can be bounded by differences in $\rho_j^{k,n}$ for which we can use the BV estimate to obtain
    \begin{align*}
        \Delta t \sum_{k=1}^N \sum_{j\in\Z} \left| G_{j-1/2}^{k,n}-G_{j+1/2}^{k,n} \right|\leq \mathcal{C}_{18} \Delta t.
    \end{align*}
    The source term $\Delta x \sum_{k=1}^N \sum_{j\in\Z}|S_j^{k,n}|$ can be bounded using its Lipschitz continuity and the $L^1$ norm. In particular (due to $\rho_m=0$), we have
    \begin{align*}
        |S_k(\brho_j^{n+1/2},\bm{R}_j^{n+1/2})|\leq &  L_S\left( \sum_{\ell=1}^N |\rho_j^{\ell,n}|+\frac{\Delta t}{2}|S_\ell(\brho_j^n,\bm{R}_j^n)|+\frac{\lambda \Delta x}{2}|\sigma_\ell(\brho_j^n,\bm{R}_j^n)|\right)\\
            \leq & L_S\left( \sum_{\ell=1}^N |\rho_j^{\ell,n}|+\frac{\Delta t L_S}{2}\sum_{l=1}^N|\rho_j^{l,n}|+\frac{\lambda L_F}{2}(|\rho_j^{\ell,n}|+|\rho_{j+1}^{\ell,n}|)\right)          
    \end{align*}
    This allows us to bound 
    \begin{align*}
        \Delta x \sum_{k=1}^N \sum_{j\in\Z} |S_k(\brho_j^{n+1/2},\bm{R}_j^{n+1/2})|\leq \mathcal{C}_{17} \Delta x\sum_{k=1}^N \sum_{j\in\Z} |\rho_j^{k,n}|,
    \end{align*}
    such that we get the estimate
    \begin{align*}
        \Delta x\sum_{k=1}^N \sum_{j\in\Z} |\rho_j^{k,n+1}|\leq \Delta x\sum_{k=1}^N \sum_{j\in\Z} |\rho_j^{k,n}|(1+\mathcal{C}_{17}\Delta t)+ \Delta t \mathcal{C}_{18}\leq (\Delta x\sum_{k=1}^N \sum_{j\in\Z} |\rho_j^{k,n}|+ \Delta t \mathcal{C}_{17})(1+\mathcal{C}_{18}\Delta t).
    \end{align*}
    Applying the last formula recursively gives the upper bound on the $L^1$ norm. Similar to above we derive the following time continuity estimate
        \begin{align*}
       \Delta x\sum_{k=1}^N \sum_{j\in\Z} |\rho_j^{k,n+1}-\rho_j^{k,n}|\leq \Delta t \mathcal{C}_{17}\Delta x\sum_{k=1}^N \sum_{j\in\Z} |\rho_j^{k,n}| +\Delta t \mathcal{C}_{18}
      \end{align*}
      The statement follows by applying the established estimate on the $L^1$ norm.
\end{proof}
This allows us to apply Helly's theorem, such that we already get the following corollary.
\begin{corollary}\label{cor:weak}
Let the Assumption \ref{set3} hold. Then, under the CFL condition \eqref{eq:CFLmaxNT} there exists a $T^*\in (0,T]$ such that the approximate solutions of the NT scheme with the slopes \eqref{eq:slopesv2} converge, up to a subsequence, to a weak solution of \eqref{eq:generalsystem}.
\end{corollary}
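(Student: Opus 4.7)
The strategy is a standard compactness-plus-pass-to-the-limit argument that now proceeds thanks to the strong estimates just assembled.

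First, I would apply a Helly-type compactness argument. The BV bound of Proposition \ref{BV estiamte} grows only like $(1+\mathcal{C}_{10}\Delta t)^n$, so on any interval $[0,T^*]$ with $T^*$ sufficiently small it is uniformly bounded in $\Delta x$. Combined with the $L^1$ time continuity estimate $\Delta x\sum_{k,j}|\rho_j^{k,n+1}-\rho_j^{k,n}|\leq \mathcal{C}_{19}\Delta t$ from Proposition \ref{Prop:timecont} and the exponential $L^\infty$ bound of the corollary following Theorem \ref{thm:max}, this gives relative compactness of the piecewise constant/linear reconstructions $\brho_\Delta$ in $L^1_{\mathrm{loc}}((0,T^*)\times\R;\R^N)$ by a Kolmogorov–Riesz–Fréchet argument. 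Extract a subsequence, not relabeled, converging strongly in $L^1_{\mathrm{loc}}$ and pointwise a.e.\ to some limit $\brho\in L^\infty\cap BV((0,T^*)\times\R;\R^N)$.

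Second, I would show $\brho$ satisfies Definition \ref{def:weak}. As in the proof of Theorem \ref{thm:weakconv}, I multiply \eqref{eq:system staggered_final} by $\Delta x\,\phi^k(x_j,t^n)$ for $\phi^k\in C_0^1((-42,T^*)\times\R;\R)$, sum over $j$ and $n$, and use summation by parts to recognise the resulting expression as a consistent discretisation of the weak formulation. Whereas only weak-$*$ convergence was available in the linear case, the present strong $L^1_{\mathrm{loc}}$ convergence of $\brho_\Delta$, together with the Lipschitz continuity of $g_k$, $V_k$ and $S_k$ under Assumption \ref{set3}, passes directly to the limit in the nonlinear flux $F_k(\rho^k,\bm{R})=g_k(\rho^k)V_k(\bm{R})$ and in $S_k(\brho,\bm{R})$ by dominated convergence. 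For the nonlocal quadrature \eqref{eq:RNT}, the compact support and $C^1$ regularity of $\omega^{\ell,k}$ combined with the uniform BV bound give $|\bm{R}_j^{n}-(\omega\ast\brho)(t^n,x_j)|=O(\Delta x)$, and strong convergence of $\brho_\Delta$ then transfers this to convergence of the convolution.

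Third, the predictor values $\rho_j^{k,n+1/2}$ and $\bm{R}_j^{n+1/2}$ differ from $\rho_j^{k,n}$ and $\bm{R}_j^n$ by corrections of order $\Delta t$ involving either the source, the slopes $\sigma^k$ of \eqref{eq:slopesv2}, or the approximate time derivative $R_t$. Each of these is bounded uniformly by the $L^\infty$ estimate and Lemma \ref{lem:estimates}, so after multiplication by the bounded test function and summation the corresponding contributions are $O(\Delta t)$ and vanish in the limit. Combining the three steps yields the weak formulation of Definition \ref{def:weak} and establishes the corollary.

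The main obstacle is not conceptual but organisational: one must track many cross terms that appear when expanding differences between discrete and continuous quantities, in particular the discrepancy between the numerical convolution \eqref{eq:RNT} and the exact one, and the extra errors introduced by the half-step predictor. Every individual estimate is furnished by Propositions \ref{BV estiamte}--\ref{Prop:timecont}, Lemma \ref{lem:estimates} and the $L^\infty$ bound; the work consists in running through the flux- and source-consistency calculation of the proof of Theorem \ref{thm:weakconv} with strong convergence replacing weak-$*$ convergence wherever nonlinearity enters.
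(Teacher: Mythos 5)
Your proposal is correct and follows essentially the same route as the paper: Helly-type compactness from the BV, time-continuity and $L^\infty$ estimates gives $L^1_{\mathrm{loc}}$ convergence of a subsequence, and then the Lax--Wendroff consistency argument of Theorem \ref{thm:weakconv} is rerun with strong convergence replacing weak-$*$ convergence in the nonlinear flux, source and convolution terms. The only slight looseness is in your third step: the slope and predictor corrections (e.g.\ $\Delta t\,\sigma^k$) are not uniformly $O(\Delta t)$ pointwise near discontinuities, so their vanishing must be argued via summability from the BV estimate of Proposition \ref{BV estiamte} rather than from the $L^\infty$ bound alone --- which is exactly the replacement for the modified slopes \eqref{eq:slopesmodconv} that the paper points out is no longer needed here.
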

\begin{proof}\
     From Propositions \ref{BV estiamte} and \ref{Prop:timecont} a BV estimate in space and time can be derived.
     The BV and $L^\infty$ estimate allow to apply Helly's Theorem, guaranteeing the existence of $\brho \in L^{\infty}\cap BV((0,T^*)\times\R)$ and the $L^1_\textnormal{loc}$-convergence (up to a subsequence) of $\brho_\Delta \to \brho$. 
     We can follow the same steps as in the proof of the weak-$*$ convergence in Theorem \ref{thm:weakconv} with minor modifications. 
     In parts where we used the linearity of the flux or source, we can now either pass to the limit directly due to Helly's theorem or use the BV estimate to show that the corresponding terms converge to zero. Note that we do not need to modify the flux further as in \eqref{eq:slopesmodconv}, since the BV estimate can be used.
\end{proof}
To prove the convergence against entropy weak solutions we need a discrete analog and hence we define, similar to the definition in \eqref{schemelinear}:
\begin{align*}
    G_{j+1/2}^{k,n}(u,w)\coloneq& \frac{1}{\lambda}\bigg[\frac{1}{4}(u-w)+\frac{\Delta x}{16}(s_{j+1}^{k,n}+s_j^{k,n})+\frac{\Delta x}{8}(s_{j+1/2}^{k,n+1})\\
    &+ \frac{\lambda}{2}\bigg(F_k(w+\frac{\Delta t}{2}(S_k(\brho_{j+1}^n,\bm{R}_{j+1}^n)-\sigma^k(\brho_{j+1}^n,\bm{R}_{j+1}^n)),\bm{R}_{j+1}^{n+1/2})\\
    &+F_k(u+\frac{\Delta t}{2}(S_k(\brho_j^n,\bm{R}_j^n)-\sigma^k(\brho_j^n,\bm{R}_j^n)),\bm{R}_j^{n+1/2})\bigg)\bigg]
\end{align*}
For $\zeta \in \R$ we define $a\wedge b\coloneq \max(a,b),\ a\vee b\coloneq \min(a,b)$ and
\begin{align*}
    \mathcal{F}_{j+1/2}^{\zeta,k,n}(u,w)\coloneqq G_{j+1/2}^{k,n}(u \wedge \zeta ,w \wedge \zeta )-G_{j+1/2}^{k,n}(u \vee \zeta ,w\vee \zeta ).
\end{align*}
\begin{proposition}
Let the Assumption \ref{set3} hold. Then under the CFL condition \eqref{eq:CFLmaxNT} there exists $n^*\in \N$ such that the approximate solutions of the NT scheme with the slopes \eqref{eq:slopesv2}  scheme fulfill the following discrete entropy inequality for each $k=1,\dots,N$, $n\in \{0,\dots,n^*\}$ and $\zeta \in \R$
\begin{align*}
    &|\rho_j^{k,n+1}-\zeta|-|\rho_j^{k,n}-\zeta| + \textnormal{sgn}(\rho_j^{k,n+1}-\zeta)\bigg( \frac{\Delta x}{16}(s_{j+1}^{k,n}-s_{j-1}^{k,n})+\frac{\Delta x}{8} (s_{j+1/2}^{k,n}-s_{j-1/2}^{k,n}) \\
    &+\frac{\lambda}{2} (F_k(\zeta+\frac{\Delta t}{2}(S_k(\brho_{j+1}^n,\bm{R}_{j+1}^n)-\sigma^k(\brho_{j+1}^n,\bm{R}_{j+1}^n)),\bm{R}_{j+1}^{n+1/2})\\
    &-F_k(\zeta+\frac{\Delta t}{2}(S_k(\brho_{j-1}^n,\bm{R}_{j-1}^n)-\sigma^k(\brho_{j-1}^n,\bm{R}_{j-1}^n)),\bm{R}_{j-1}^{n+1/2}))\\
    &- \frac{\Delta t}{4}\bigg[S_k(\brho_{j+1}^{n+1/2},\bm{R}_{j+1}^{n+1/2})+2S_k(\brho_j^{n+1/2},\bm{R}_{j}^{n+1/2})+S_k(\brho_{j-1}^{n+1/2},\bm{R}_{j-1}^{n+1/2})\bigg]\bigg)\\
    &+\lambda \left(\mathcal{F}_{j+1/2}^{\zeta,k,n}(\rho_j^{k,n},\rho_{j+1}^{k,n})-\mathcal{F}_{j-1/2}^{\zeta,k,n}(\rho_{j-1}^{k,n},\rho_{j}^{k,n})\right)\leq 0
\end{align*}
\end{proposition}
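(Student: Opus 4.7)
The plan is to derive this discrete entropy inequality by adapting the standard Harten--Crandall--Majda argument for three--point monotone schemes. The key point is to freeze every quantity in the NT update \eqref{eq:system staggered_final} that does not depend explicitly on the triple $(\rho_{j-1}^{k,n},\rho_j^{k,n},\rho_{j+1}^{k,n})$: namely the outer slopes $s^{k,n}_{j\pm 1}$, the staggered slopes $s^{k,n+1}_{j\pm 1/2}$, the nonlocal averages $\bm{R}^{n+1/2}_{j\pm 1}$, the predictor--related values $\sigma^k(\brho_{j\pm 1}^{n},\bm{R}_{j\pm 1}^{n})$, $S_k(\brho_{j\pm 1}^{n},\bm{R}_{j\pm 1}^{n})$ and the averaged source evaluations $S_k(\brho_{j\pm 1}^{n+1/2},\bm{R}_{j\pm 1}^{n+1/2})$. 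Under this freezing the update reads $\rho_j^{k,n+1} = H(\rho_{j-1}^{k,n},\rho_j^{k,n},\rho_{j+1}^{k,n})$ with
\begin{align*}
H(a,b,c) \coloneq b + \lambda\bigl(G^{k,n}_{j-1/2}(a,b) - G^{k,n}_{j+1/2}(b,c)\bigr) + \Delta t\, S_j^{k,n},
\end{align*}
where $G$ is the numerical flux defined just before the proposition and $S_j^{k,n}$ is the averaged source of \eqref{schemelinear}.

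First, I would verify that $H$ is non--decreasing in each of its three arguments under the CFL condition \eqref{eq:CFLmaxNT}. Differentiating gives $\partial_u G^{k,n}_{j+1/2}(u,w) = \tfrac{1}{4\lambda} + \tfrac{1}{2}\partial_{\rho^k}F_k(u+\tfrac{\Delta t}{2}(S_j-\sigma_j),\bm{R}_j^{n+1/2})$ and $\partial_w G^{k,n}_{j+1/2}(u,w) = -\tfrac{1}{4\lambda} + \tfrac{1}{2}\partial_{\rho^k}F_k(w+\tfrac{\Delta t}{2}(S_{j+1}-\sigma_{j+1}),\bm{R}_{j+1}^{n+1/2})$. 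The bound $\lambda L_F(1+\lambda L_F)\le 1/4$ already exploited in Theorem \ref{thm:max} yields $\lambda L_F \le (\sqrt{2}-1)/2 < 1/2$, so $\tfrac{1}{4\lambda}\ge \tfrac{L_F}{2}$ with strict slack. Consequently $\partial_a H = \lambda\partial_u G_{j-1/2}^{k,n} \ge 0$, $\partial_c H = -\lambda\partial_w G_{j+1/2}^{k,n} \ge 0$, and $\partial_b H = 1+\lambda\partial_w G_{j-1/2}^{k,n} - \lambda\partial_u G_{j+1/2}^{k,n} = \tfrac12 + O(\lambda L_F) \ge 0$.

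Second, I would invoke the Harten identity: for a non--decreasing $H$ and any constant $\zeta$,
\begin{align*}
H(a\wedge\zeta,b\wedge\zeta,c\wedge\zeta) - H(a\vee\zeta,b\vee\zeta,c\vee\zeta) \ge |H(a,b,c) - H(\zeta,\zeta,\zeta)|,
\end{align*}
with the paper's convention $a\wedge b=\max(a,b)$, $a\vee b=\min(a,b)$. Taking $(a,b,c) = (\rho_{j-1}^{k,n},\rho_j^{k,n},\rho_{j+1}^{k,n})$, the left--hand side telescopes via the definition of $\mathcal{F}^{\zeta,k,n}_{j\pm 1/2}$ and the identity $b\wedge\zeta - b\vee\zeta = |b-\zeta|$ into $|\rho_j^{k,n}-\zeta| - \lambda\bigl(\mathcal{F}^{\zeta,k,n}_{j+1/2}(\rho_j^{k,n},\rho_{j+1}^{k,n}) - \mathcal{F}^{\zeta,k,n}_{j-1/2}(\rho_{j-1}^{k,n},\rho_j^{k,n})\bigr)$. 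A direct substitution shows that $\zeta - H(\zeta,\zeta,\zeta)$ equals exactly the bracketed residual in the statement: the two slope differences, the $F_k(\zeta,\cdot)$--difference with the half--step predictor correction, and the averaged source with its sign. Combining these with the elementary one--sided bound $|\rho_j^{k,n+1} - H(\zeta,\zeta,\zeta)| \ge |\rho_j^{k,n+1}-\zeta| - \mathrm{sgn}(\rho_j^{k,n+1}-\zeta)\,(H(\zeta,\zeta,\zeta)-\zeta)$ (which holds for all reals since $|X-Y|\ge \mathrm{sgn}(X)(X-Y)$) and rearranging produces the announced entropy inequality.

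The hardest step is the monotonicity verification. The predictor $\rho_j^{k,n+1/2}$ inherits non--monotone dependencies on $\rho_{j\pm 1}^{k,n}$ through the minmod limiter inside $\sigma^k$, on $\rho_{j\pm 2}^{k,n}$ through the outer slopes $s^{k,n}_{j\pm 1}$, and on the next--step reconstruction $s^{k,n+1}_{j\pm 1/2}$ through the staggered cell--averages. These pieces \emph{cannot} be absorbed into a monotone three--point core and must be frozen, which is precisely why the statement carries the slope increments, the $F_k(\zeta,\cdot)$--difference and the averaged source \emph{outside} the numerical entropy flux $\mathcal F^{\zeta,k,n}$, multiplied by $\mathrm{sgn}(\rho_j^{k,n+1}-\zeta)$: they are the residual $\zeta - H(\zeta,\zeta,\zeta)$ produced by the frozen parameters rather than by the monotone core of the scheme.
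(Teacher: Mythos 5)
Your proof is correct and follows essentially the same route as the paper, which simply observes that $G_{j+1/2}^{k,n}(u,w)$ is monotone increasing in $u$ and decreasing in $w$ under the CFL condition and then invokes the standard Crandall--Majda/Harten argument from the cited references; you have merely written out the details (freezing the nonlocal and slope terms, the monotonicity check, and the residual $\zeta-H(\zeta,\zeta,\zeta)$) that the paper leaves implicit. The only cosmetic caveat is that $g_k$ is merely Lipschitz under Assumption \ref{set3}, so the monotonicity verification should be phrased via difference quotients rather than derivatives, which changes nothing in substance.
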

\begin{proof}\
    Although the numerical flux $G_{j+1/2}^{k,n}(u,w)$ seems to be more complex, the proof is completely analogous to the ones in e.g. \cite{amorim2015numerical, friedrich2018godunov,chiarello2018global}, since under the CFL condition \eqref{eq:CFLstart} (and hence also under \eqref{eq:CFLmaxNT}) the flux $G_{j+1/2}^{k,n}(u,w)$ is monotone increasing in $u$ and decreasing in $w$. 
\end{proof}
Finally, we are able to prove our main result of this section:
\begin{proof}[Proof of Theorem \ref{thm:L1locconv}]\
    The existence of $\brho \in L^{\infty}\cap BV((0,T^*)\times\R)$ and the $L^1_\textnormal{loc}$-convergence (up to a subsequence) of $\brho_\Delta \to \brho$ can be obtained as in the proof of Corollary \ref{cor:weak}.
    Then starting from the discrete entropy inequality and using Lax-Wendroff type arguments, one can show that this limit function is indeed a weak entropy solution in the sense of Definition \ref{def:weakentropy}. 
    The procedure is thereby very similar to the steps in the proof of Theorem \ref{thm:weakconv} and to the proofs in \cite{BlandinGoatin2016,friedrich2020onetoone}. 
    The main differences are the additional terms introduced by the slopes $s_j^{k,n}$ and $s_{j+1/2}^{k,n}$ as well as the terms coming from the half-time steps, e.g. $ \frac{\Delta t}{2}(S_k(\brho_{j}^n,\bm{R}_{j}^n)-\sigma^k(\brho_{j}^n,\bm{R}_{j}^n))$. 
    Nevertheless, thanks to the BV estimate, all these additional terms converge to zero for $\Delta x\to 0$.
\end{proof}
\section{Numerical examples}\label{sec:numerical}
In this section, we demonstrate the applicability and performance of the central schemes from Sections \ref{sec:staggered central} and \ref{sec:BV} to different nonlocal models. We show how to apply them to further nonlocal models not covered in the previous sections, too.
\par We will present convergence tests for smooth initial data and an approximation of the solution for discontinuous initial data to demonstrate the performance of the numerical schemes.
\rv{We will start with an example in which the convergence of the NT schemes with the slopes \eqref{eq:slopesv1} and \eqref{eq:slopesv2} is guaranteed. Then we will consider examples in which our theoretical results only guarantee the convergence of \eqref{eq:slopesv2} and demonstrate that the more convenient approximation \eqref{eq:slopesv1} still provides an appropriate solution.}
\par \rv{We compare the NT schemes to first-order accurate LxF type schemes and their second-order extensions. We note that first-order LxF type schemes have been widely studied in the literature of nonlocal conservation laws. In contrast, there are only a few works considering their second-order extension. Here, we follow \cite{manoj2025convergence,manoj2024positivity} and use the same linear reconstruction as for the NT scheme, the corresponding first-order LxF type fluxes, a two-step Runge-Kutta method in time and similarly as in the NT scheme the trapezoidal rule for the source term. We note that the convergence properties of this scheme for systems (even without a source) have not been fully studied in the literature and are out of scope for this work. We only use it for numerical comparisons.} If not stated otherwise, we use periodic boundary conditions.

\subsection{Keyfitz-Kranzer-Model}
As a \rv{first} test case we consider the Keyfitz-Kranzer model similar to \cite{aggarwal2015nonlocal,aggarwal2024well}.
\rv{For this model the convergence results for both versions of the NT scheme hold, as the fluxes are linear. In particular, } the modeling equations are given by:
\begin{align*}
    \partial_t \rho^1+ \partial_x (\rho^1 v(\omega_\eta\ast \rho^1, \omega_\eta\ast \rho^2))&=0,\\
    \partial_t \rho^2+ \partial_x (\rho^2 v(\omega_\eta\ast \rho^1, \omega_\eta\ast \rho^2))&=0.
\end{align*}
The convergence of a LxF type scheme has been studied in \cite{aggarwal2015nonlocal,aggarwal2024well} \rv{and we choose the corresponding diffusion parameter as $1/3$.} 
\rv{We use the same parameter value for its second-order extension. The CFL condition is given in \cite{aggarwal2024well}, as it is the most restrictive one.}
Following \cite{aggarwal2024well}, we consider the kernel:
\[\omega_\eta(x)= L(-x(\eta+x))^{5/2} \chi_{(-\eta,0)}(x),\]
where $L$ is chosen such that the integral is normalized to $1$.
The velocity function is $v(a,b)=(1-a^2-b^2)^3$.
\rv{First}, we consider a discontinuous \rv{initial datum} to see the approximation of the different numerical schemes\rv{, i.e. the second-order accurate NT schemes with slopes \eqref{eq:slopesv1} and \eqref{eq:slopesv2} as well as a first-order accurate LxF scheme and its second-order extension}. The initial conditions are given by
\[\rho_0^1(x)=0.25 \chi_{(1,3)}(x),\quad \rho_0^2(x)=1\chi_{(1,3)}(x),\]
with $x\in[0,4]$, the final time is $T=0.3$ and $\eta=1$.
The solution is shown in Figure \ref{fig:figureSys}.
All schemes approximate the solution with the LxF scheme being the most diffusive one.
\rv{There is no visible difference between the two different versions of the NT schemes.}
\rv{The grid size is given by $\Delta x= \frac{1}{40}$ and the reference solution is computed by the NT scheme with the slopes given by \eqref{eq:slopesv2} and $\Delta x= \frac{1}{20}\cdot 2^{-7}$.}
 \begin{figure}
    \centering
     \setlength{\fwidth}{0.75\linewidth}
     \input{KeyfitzKranzerdiscNew}
     \caption{\rv{Numerical solutions of the nonlocal Keyfitz-Kranzer model at $T=0.3$ obtained by different numerical schemes for $\Delta x= \frac{1}{40}$.}}\label{fig:figureSys}
 \end{figure}
The order of accuracy \rv{in the $L^1$ norm} is tested for the smooth initial data 
\[\rho_0^1(x)=-0.1-0.2\sin(\pi x),\quad \rho_0^2(x)=0.2+0.1\sin(\pi x),\]
on the interval $[-1,1]$ with $\eta=0.5$ and final time $T=0.15.$ 
\rv{Here, we compare the approximate solutions (with $\Dx=\frac{1}{20}\cdot2^{-n}$ for $n=0,\dots,5$) to a reference solution, still computed by the NT scheme with the slopes \eqref{eq:slopesv2} and $\Dx=\frac{1}{20}\cdot2^{-9}$.}
The results in Table \ref{tab:convTestSystemL1} display the expected order of accuracy for each numerical scheme.
\rv{We note that using the more accurate approximation of the slopes, i.e. \eqref{eq:slopesv2} results in slightly smaller errors. }

\begin{table}[hbt!]
\centering
\caption{\rv{$L^1$ errors and convergence rates for the smooth initial data and the Keyfitz-Kranzer system}}
\label{tab:convTestSystemL1}
    \begin{tabular}{c | c  c | c  c | c  c | c  c }
        &\multicolumn{2}{c|}{LxF} & \multicolumn{2}{c|}{2nd order LxF} &\multicolumn{2}{c|}{NT with \eqref{eq:slopesv1}} & \multicolumn{2}{c}{NT with \eqref{eq:slopesv2}}\\
      $n$&$L^1$-error&c.r. &$L^1$-error&c.r. &$L^1$-error&c.r. &$L^1$-error&c.r. \\
      \hline \hline
0&8.53e-02&-&1.15e-02&-&1.77e-02&-&1.76e-02&-\\
1&4.56e-02&0.90&3.61e-03&1.67&5.52e-03&1.68&5.51e-03&1.68\\
2&2.37e-02&0.95&1.01e-03&1.85&1.56e-03&1.82&1.56e-03&1.82\\
3&1.21e-02&0.97&2.71e-04&1.89&4.25e-04&1.88&4.24e-04&1.88\\
4&6.09e-03&0.99&7.01e-05&1.95&1.10e-04&1.95&1.10e-04&1.95\\
5&3.06e-03&0.99&1.75e-05&2.00&2.77e-05&1.99&2.77e-05&1.99\\
\hline\hline
    \end{tabular} 
\end{table}

\subsection{\rv{Nonlinear flux:} Scalar case}
In the \rv{next} test case, we demonstrate how the schemes perform in the presence of a nonlinearity in the flux.
Since the NT schemes belong to the class of central schemes (as the LxF), they can be directly applied to models with a nonlinearity in the flux and no Riemann solvers need to be constructed.
\rv{While the convergence of the NT scheme with the slopes approximated by \eqref{eq:slopesv2} is proven, the convergence for the more convenient approximation by \eqref{eq:slopesv1} remains open. 
Hence, we will study it numerically. In the previous example we have already seen that both approximation only result in small differences.}
\par In particular, we consider the nonlocal Arrhenius-type look ahead dynamics for $N=1$ similar to \cite{bellman1983vibrational, sopasakis2006stochastic,chiarello2018global}: 
\[\partial_t \rho + \partial_x (\rho(1-\rho)v(\omega_\eta \ast \rho))=0,\]
with $v(\rho)=\exp(-\rho)$ and a kernel function of compact support on $[0,\eta]$, non-increasing and unit integral.
LxF-type schemes for this and similar models have already been discussed in several works, see e.g. \cite{friedrich2023numerical, chiarello2018global, friedrich2018godunov, huang2024asymptotic}.
Here, we employ the schemes considered in \cite{friedrich2023numerical} \rv{and use the same time step size given by the CFL condition \eqref{eq:CFLmaxNT}.}
\par In the following, the nonlocal range is set to $\eta=0.2$.
The discontinuous initial data $\rho_0(x)$ is
\begin{align}
    \rho_0(x)=0.2+0.8\chi_{[-1/4,1/4]}(x),
\end{align}
and the kernel is constant, i.e., $\omega_\eta(x)=1/\eta$ for $x\in [0,\eta]$. Figure \ref{fig:figure4} shows \rv{a zoom into} the solution at $T=1.5$ for the different numerical schemes the grid size \rv{$\Delta x= \frac{1}{20}\cdot 2^{-3}$}. 
In addition, a reference solution with the NT scheme \rv{\eqref{eq:slopesv2} and $\Delta x= \frac{1}{20}\cdot 2^{-9}$ is displayed}.
\rv{It can be {observed} that the NT schemes provide more accurate approximations than the first-order LxF scheme but less accurate ones than the second-order LxF scheme, and that both NT variants provide a very similar approximation.}
 \begin{figure}
     \centering
     \setlength{\fwidth}{0.6\linewidth}
     \input{ArrheniusdiscNew}
     \caption{\rv{Zoom into the numerical solutions of the Arrhenuis-type look ahead dynamics at $T=1.5$ obtained by different numerical schemes with \rv{$\Delta x= \frac{1}{160}$}}.}\label{fig:figure4} 
 \end{figure}
 \par Furthermore, we test the numerical convergence rate in the $L^1$-norm.
Here, we compare the approximate solutions \rv{at $T=0.15$} to a reference solution, still computed by the NT type scheme \rv{\eqref{eq:slopesv2} and $\Delta x= \frac{1}{20}\cdot 2^{-9}$}.
We consider the smooth initial data
\begin{align}
    \rho_0(x)=0.5+0.4\sin(\pi x),
\end{align}
and $\Dx=\frac{1}{20}\cdot2^{-n}$ for $n=0,\dots,5$. Additionally, we consider the following kernels with compact support on $[0,\eta]$
\begin{align*}
        \text{constant:} \quad\omega_{\eta}(x)=\frac{1}{\eta},\qquad \text{linear:} \quad            \omega_{\eta}(x)=\frac{2}{\eta}(1-\frac{x}{\eta}),\qquad
        \text{concave:}             \quad\omega_{\eta}(x)=3\frac{\eta^{2}-x^2}{2\eta^3}.
\end{align*}
\rv{Table \ref{table 3} shows the corresponding $L^1$-errors and demonstrates that all schemes converge to the reference solution with the expected order of convergence.
Similar as before, the slopes \eqref{eq:slopesv2} result in a more accurate approximation of the solution, although the differences to \eqref{eq:slopesv1} are small.}
\begin{table}[hbt!]
\centering
\caption{\rv{$L^1$ errors and convergence rates for the smooth initial data and the Arrhenius-type model}}\label{table 3}
    \begin{tabular}{c c | c  c | c  c | c  c | c  c }
    &  &\multicolumn{2}{c|}{LxF} & \multicolumn{2}{c|}{2nd order LxF} &\multicolumn{2}{c|}{NT with \eqref{eq:slopesv1}} & \multicolumn{2}{c}{NT with \eqref{eq:slopesv2}}\\
& $n$& $L^1$ error & c.r.& $L^1$ error & c.r.& $L^1$ error & c.r.& $L^1$ error & c.r.\\
    \hline\hline
\multirow{6}{*}{const.}
&0&1.17e-02&-&2.95e-03&-&7.52e-03&-&7.50e-03&-\\
&1&5.89e-03&0.99&8.15e-04&1.85&2.03e-03&1.89&2.02e-03&1.90\\
&2&2.95e-03&1.00&2.16e-04&1.92&5.40e-04&1.91&5.37e-04&1.91\\
&3&1.48e-03&1.00&5.82e-05&1.89&1.46e-04&1.89&1.45e-04&1.89\\
&4&7.39e-04&1.00&1.53e-05&1.93&3.87e-05&1.92&3.81e-05&1.93\\
&5&3.70e-04&1.00&3.89e-06&1.97&1.01e-05&1.94&9.85e-06&1.95\\
\hline
\multirow{6}{*}{linear}
&0&1.22e-02&-&2.80e-03&-&7.39e-03&-&7.37e-03&-\\
&1&6.07e-03&1.00&7.63e-04&1.88&1.95e-03&1.92&1.94e-03&1.93\\
&2&3.03e-03&1.00&2.01e-04&1.93&5.22e-04&1.90&5.19e-04&1.90\\
&3&1.51e-03&1.00&5.36e-05&1.90&1.39e-04&1.91&1.38e-04&1.91\\
&4&7.55e-04&1.00&1.39e-05&1.94&3.64e-05&1.93&3.60e-05&1.94\\
&5&3.77e-04&1.00&3.55e-06&1.97&9.42e-06&1.95&9.28e-06&1.96\\
\hline
\multirow{6}{*}{concave}
&0&1.20e-02&-&2.84e-03&-&7.42e-03&-&7.41e-03&-\\
&1&6.02e-03&1.00&7.76e-04&1.87&1.96e-03&1.92&1.96e-03&1.92\\
&2&3.01e-03&1.00&2.05e-04&1.92&5.28e-04&1.89&5.25e-04&1.90\\
&3&1.50e-03&1.00&5.48e-05&1.90&1.41e-04&1.91&1.40e-04&1.91\\
&4&7.51e-04&1.00&1.42e-05&1.94&3.69e-05&1.93&3.65e-05&1.94\\
&5&3.75e-04&1.00&3.63e-06&1.97&9.57e-06&1.95&9.41e-06&1.95\\
\hline\hline
\end{tabular}    
\end{table}

\subsection{Nonlocal systems with source term}

\subsubsection{Multilane traffic flow model}
We start by considering a system used to model multilane traffic flow introduced in \rv{\cite{bayen2022multilane,friedrich2020nonlocal,chiarello2024singular}} for simplicity with $N=2$:
\begin{align*}
    \partial_t \rho^1+ \partial_x (\rho^1 v(\omega_\eta\ast \rho^1))&=-S(\rho^1,\rho^2,\omega_\eta\ast \rho^1,\omega_\eta\ast \rho^2),\\
    \partial_t \rho^2+ \partial_x (\rho^2 v(\omega_\eta\ast \rho^2))&=S(\rho^1,\rho^2,\omega_\eta\ast \rho^1,\omega_\eta\ast \rho^2),
\end{align*}
and the source term is given similar to \cite{friedrich2020nonlocal} as
\[S(\rho^1,\rho^2,R^1,R^2)=(v(R^2)-v(R^1))\begin{cases}
    \rho^1(1-\rho^2),&\text{if }v(R^2)\geq v(R^1),\\
    \rho^2(1-\rho^1),&\text{if }v(R^2)< v(R^1).
\end{cases}\]
The kernel is forward looking with compact support on $[0,\eta]$ and unit integral. 
Here, we consider the linear decreasing kernel with $\eta=0.5$.
\rv{Although the fluxes are linear in the local variable the sources are nonlinear. Hence, the convergence is only guaranteed for the NT scheme \eqref{eq:slopesv2}. We are interested in} the following test case from \cite{bayen2022multilane,friedrich2020nonlocal}:
\[\rho_0^1(x)=q(2x-0.5),\quad \rho_0^2(x)=q(2x)\quad\text{with}\quad q(x)=4x^2(1-x^2)\chi_{(0,1)}(x),\]
\rv{and $v(\rho)=1-\rho^2$.}
The solution at $T=0.5$ is displayed in Figure \ref{fig:multilane} with \rv{$\Delta x=5\cdot 10^{-3}$}.
\rv{We use the same numerical schemes as in the previous section, only adding the source terms for the LxF schemes.}
Note that in this example we compute the solution \rv{on a domain large enough that no interaction of the solution with the boundaries occur and thus we use zero boundary conditions.} The reference solution is obtained with \rv{$\Delta x=10^{-4}$ and the NT scheme \eqref{eq:slopesv2}.
Similarly as before, there is no visible difference between the two NT schemes and only a small difference in comparsion to the second-order LxF type scheme.}
\begin{figure}
     \centering
     \setlength{\fwidth}{0.75\linewidth}
     \input{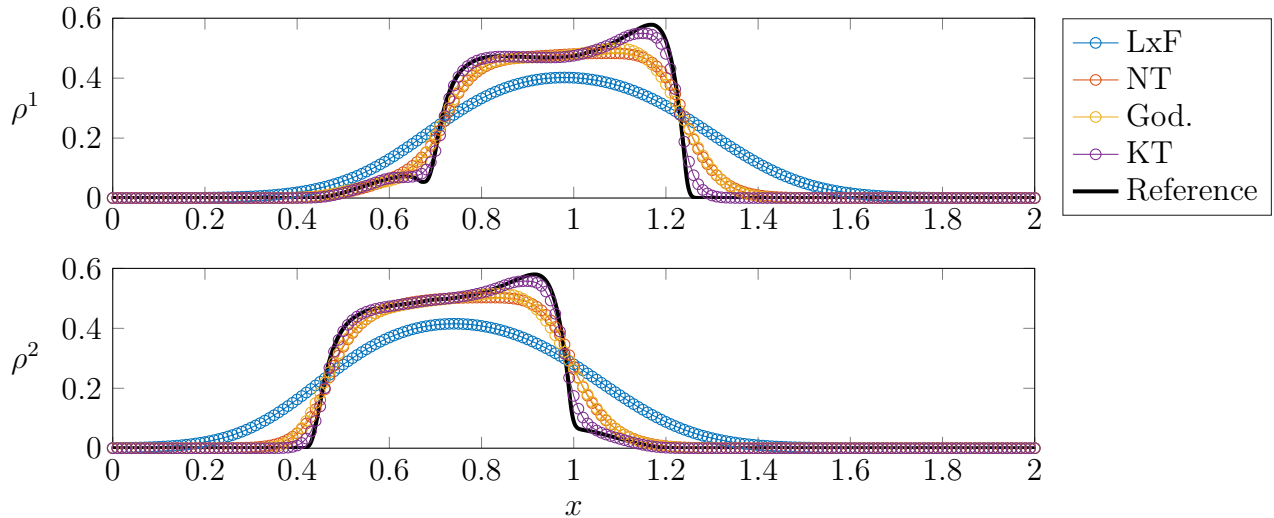}
     \caption{\rv{Numerical solutions of the nonlocal multilane model at $T=0.5$ obtained by different numerical schemes $\Delta x=5\cdot 10^{-3}$.}}\label{fig:multilane}
\end{figure}

The smooth initial data to test the convergence are given by
\[\rho_0^1(x)=0.5+0.5\sin(\pi x),\quad \rho_0^2(x)=0.25+0.25\cos(2\pi x),\]
on the interval $[-1,1]$ with the final time $T=0.15$. Here, the step sizes are given by $\Delta x=\frac{1}{20}\cdot 2^{-n}$, $n=0,\dots,5$ and \rv{$n=9$} for the reference solution obtained by the NT scheme \rv{with the slopes \eqref{eq:slopesv2}}.
The expected convergence orders can be seen in Table \ref{tab:convTest1L1Multilane}.
\rv{Again, we observe that the NT scheme \eqref{eq:slopesv1} converges to the correct solution.}
\begin{table}[hbt!]
\caption{\rv{$L^1$ errors and convergence rates for the smooth initial data and the multilane model.}}\label{tab:convTest1L1Multilane}
\centering
\begin{tabular}{ c | c  c | c  c | c  c | c  c}
&\multicolumn{2}{c|}{LxF} & \multicolumn{2}{c|}{2nd order LxF} &\multicolumn{2}{c|}{NT with \eqref{eq:slopesv1}} & \multicolumn{2}{c}{NT with \eqref{eq:slopesv2}}\\
$n$&$L^1$-error&c.r. &$L^1$-error&c.r. &$L^1$-error&c.r. &$L^1$-error&c.r. \\ \hline \hline
0&3.14e-01&-&6.37e-02&-&9.78e-02&-&9.77e-02&-\\
1&1.85e-01&0.76&2.26e-02&1.50&3.21e-02&1.61&3.21e-02&1.61\\
2&1.02e-01&0.86&6.92e-03&1.70&1.05e-02&1.62&1.05e-02&1.62\\
3&5.38e-02&0.92&1.95e-03&1.83&2.96e-03&1.82&2.95e-03&1.82\\
4&2.77e-02&0.96&5.34e-04&1.87&8.18e-04&1.86&8.16e-04&1.86\\
5&1.41e-02&0.98&1.42e-04&1.91&2.19e-04&1.90&2.18e-04&1.90\\
\hline\hline
\end{tabular} 
\end{table}
\subsubsection{Nonlocal Euler equations}
Recently in \cite{bhatnagar2021well}, a nonlocal extension of the Euler equation with relaxation was derived. In one space dimension the equations read:
\begin{equation}\label{eq:NLEuler}
\begin{aligned}
    \partial_t \rho +\partial_x (\rho (\omega \ast u))&=0,\\
    \partial_t u + u \partial_x u&=\rho (\omega \ast u-u).
\end{aligned}
\end{equation}
Here, a symmetric kernel with unit integral is considered. We consider the kernel defined by
\[\omega(x)=\rv{\frac{3(\eta^2-x^2)}{4\eta^3}}\chi_{[-\eta,\eta]}(x).\]
\rv{Interestingly, the systems involves a flux driven for $\rho$ by a nonlocal component and for $u$ by a local one, which makes it favorable for central schemes as LxF and NT, since they can be directly applied.
As before due to the nonlinearity in the source and flux the convergence result for \eqref{eq:slopesv1} does not hold.}
\par In \cite{bhatnagar2021well}, the local existence of \eqref{eq:NLEuler} has been established as well as a subcritical region for the initial data to avoid $u_x\to -\infty$, which is given by $u_0'(x)+\rho_0(x)\geq 0\ ,\forall x\in\R$.
We choose the initial data
\[\rho_0(x)=0.2+0.1\sin(\pi x),\qquad u_0(x)=0.4+\frac{0.3\cos(\pi x)}{\pi},\]
which fulfills the subcritical condition, to test the convergence. 
The final time is given by $T=0.15$, $\eta=0.05$, $\Delta x=\frac{1}{20}\cdot 2^{-n}$ with a reference solution computed by the \rv{NT scheme with \eqref{eq:slopesv2} and $n=9$}.
The expected convergence rates can be obtained in Table \ref{tab:convTest1L1Euler}.
\begin{table}[hbt!]
\caption{\rv{$L^1$ errors and convergence rates for the smooth initial data and the nonlocal Euler equations.}}\label{tab:convTest1L1Euler}
\centering
\begin{tabular}{ c | c  c | c  c | c  c | c  c}
&\multicolumn{2}{c|}{LxF} & \multicolumn{2}{c|}{2nd order LxF} &\multicolumn{2}{c|}{NT with \eqref{eq:slopesv1}} & \multicolumn{2}{c}{NT with \eqref{eq:slopesv2}}\\
$n$&$L^1$-error&c.r. &$L^1$-error&c.r. &$L^1$-error&c.r. &$L^1$-error&c.r. \\ \hline \hline
0&6.06e-02&-&9.43e-03&-&1.47e-02&-&1.47e-02&-\\
1&3.20e-02&0.92&2.70e-03&1.81&4.36e-03&1.75&4.35e-03&1.75\\
2&1.64e-02&0.96&7.55e-04&1.84&1.24e-03&1.81&1.24e-03&1.81\\
3&8.33e-03&0.98&2.14e-04&1.81&3.45e-04&1.85&3.45e-04&1.85\\
4&4.19e-03&0.99&5.86e-05&1.87&9.42e-05&1.87&9.41e-05&1.87\\
5&2.10e-03&1.00&1.56e-05&1.91&2.51e-05&1.91&2.50e-05&1.91\\
\hline\hline
\end{tabular} 
\end{table}
To test the performance of the different numerical schemes, we consider numerically the limit $\eta \to 0$.
Formally, we expect that the velocity converges to the \rv{Burgers'} equation (decoupled from the density).
The solution of the density strongly depends on the velocity.
We consider a Riemann problem in $u$, which results for the Burger's equation in a rarefaction wave.
Formally, this creates a vacuum in the density.
The solutions by the different schemes at $T=0.5$ and for $\eta=2\cdot 10^{-3}$, $\Delta x=10^{-3},$
\[\rho_0(x)=\begin{cases}
0.5, &x\leq 0,\\
1.5, &x>0,
\end{cases}\qquad u_0(x)=\begin{cases}
-1, &x\leq 0,\\
1, &x>0,
\end{cases}\]
are shown in Figure \ref{fig:Euler}.
The numerical approximations confirm the formal results.
Nevertheless, we note that to best of our knowledge the limit of $\eta \to 0$ has not been investigated yet and a further more deep analysis is necessary.
In particular, the numerical evidence might be deceiving as discussed in \cite{colombo2019role}. 
 \begin{figure}
     \centering
     \setlength{\fwidth}{0.75\linewidth}
     \input{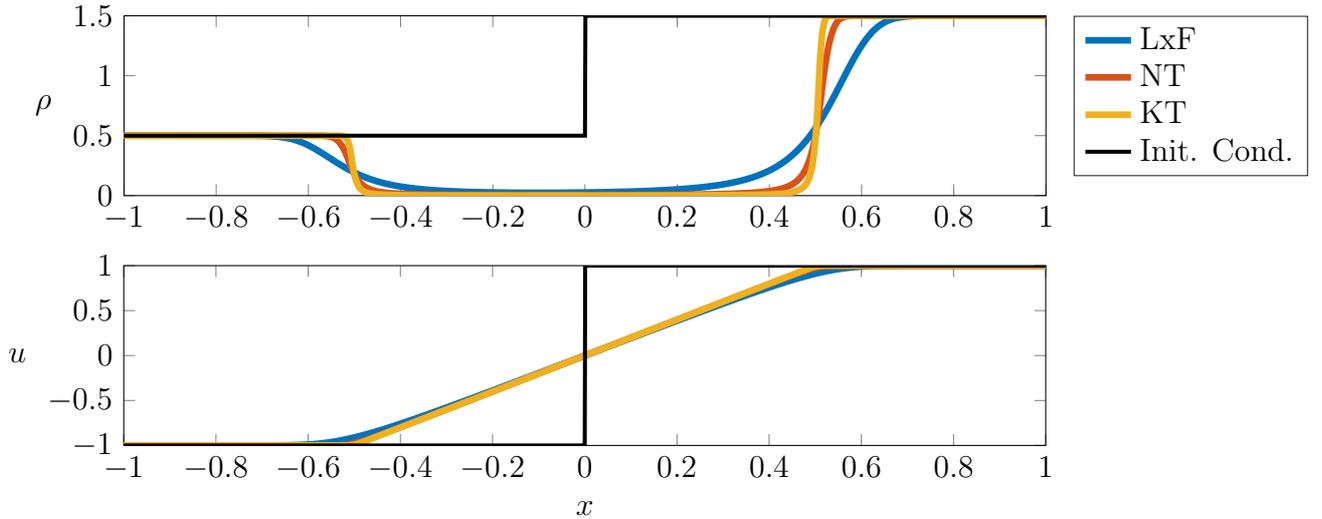}
     \caption{\rv{Numerical solutions of the nonlocal Euler equations at $T=0.5$ obtained by different numerical schemes for $\eta=2\cdot 10^{-3}$.}}\label{fig:Euler}
 \end{figure}
\subsection{Nonlocal in velocity}
In \eqref{eq:generalsystem} the convolution is computed between the kernel function and the state variable as a quantity of interest. 
Some nonlocal models consider different quantities, which might depend in a nonlinear fashion on the state variables.
For the particular example of traffic flow models this might be the convolution of the velocity, see e.g. \cite{friedrich2018godunov,friedrich2020micromacro,friedrich2022conservation}.
Following the strategy from Sections \ref{sec:staggered central} and \ref{sec:BV}, both NT schemes can be derived in a similar fashion.
Exemplarily, we consider the nonlocal generalized Aw-Rascle-Zhang (GARZ) model introduced in  \cite{friedrich2020micromacro}.
The modeling equations are given by
\begin{align*}
    \partial_t \rho +\partial_x (\rho (\omega_\eta\ast v(\rho, \rho/q)))&=0,\\
    \partial_t q +\partial_x (q (\omega_\eta\ast v(\rho, \rho/q)))&=0.
\end{align*}
Here, $\rho$ is the density, $q$ the momentum, $\omega_\eta$ a non-increasing kernel function with compact support on $[0,\eta]$ and unit integral, and $v$ is a suitable velocity function.
\par Due to the different structure of the nonlocal term slight modifications are necessary to the NT schemes.
To approximate the nonlocal term in the NT scheme, we can adapt \eqref{eq:RNT} in a straightforward manner by replacing $\rho$ with suitable evaluations of the velocity.
More modifications are necessary to approximate the temporal derivative of the nonlocal term, even though the computations are straightforward.
In particular, we have
\[\partial_t v(\rho,q/\rho)=\partial_t \rho(v_1(\rho,q/\rho)-q/\rho^2 v_2(\rho,q/\rho))+\partial_t q v_2(\rho,q/\rho),\]
with $v_1$ and $v_2$ the partial derivatives of $v$ with respect to the first and second argument.
Using this, we can compute $\partial_t\left(\omega_\eta\ast v(\rho, \rho/q)\right)(t,x)$ and additionally, the temporal derivative of $\rho$ and $q$ can be replaced by the spatial derivative of the flux.
For the NT scheme the resulting term is then approximated similar to \eqref{eq:system Rt} by using an appropriate quadrature rule and approximating the spatial derivatives by the minmod limiter \rv{as in \eqref{eq:slopesv1}. 
We note that the slope can be computed more accurately, similar to \eqref{eq:slopesv2}. For simplicity, we only consider the approximation by the minmod limiter in this subsection.}
\par \rv{We compare the numerical results to the LxF type schemes.}
The velocity function is given by $v(\rho,w)=w-6\rho$, the kernel is linearly decreasing with $\eta=0.1$. For the discontinuous test case we consider a jump in the momentum:
\[\rho_0(x)=0.05,\quad q_0(x)=\begin{cases}
    7/400,&\text{if }x\leq 0,\\
    1/25,&\text{if }x>0,
\end{cases}\]
with constant boundary conditions.
Note that this initial condition corresponds to a jump in the so-called Lagrangian maker $w=q/\rho$ from $0.35$ to $0.8$.
In particular, this jump remains in the analytic solution and is transported with a nonlocal speed.
Hence,  in Figure \ref{fig:GARZ} we display the approximate solutions at $T=1$ with \rv{$\Delta x=5\cdot 10^{-3}$} for $\rho$ and $w$.
We can obtain that the jump in $w$ is approximated, as expected, most accurately by the second-order schemes and the diffrences between the NT and second-order LxF scheme are small.
 \begin{figure}
     \centering
     \setlength{\fwidth}{0.75\linewidth}
     \input{NLGarzNew}
     \caption{\rv{Numerical solutions of the nonlocal GARZ model at $T=1$ obtained by different numerical schemes with $\Delta x=5\cdot 10^{-3}$.}}\label{fig:GARZ}
 \end{figure}
For the smooth test case we consider the interval $[-1,1]$ and
\[ \rho_0(x)=0.3+0.2\sin(\pi x),\quad q_0(x)=(0.3+0.2\sin(\pi x))(1.9+1.25\sin(\pi x)).\]
The reference solution is computed by the \rv{NT scheme and $\Delta x=\frac{1}{20}\cdot 2^{-9}.$}  Similar observations as in all the previous test cases can be made. The convergence rates are displayed in the Table \ref{tab:convTest1L1GARZ}.
\begin{table}[hbt!]
\caption{\rv{$L^1$ errors and convergence rates for the smooth initial data and the nonlocal GARZ model.}}\label{tab:convTest1L1GARZ}
\centering
    \begin{tabular}{ c | c  c | c  c | c  c }
          &\multicolumn{2}{c|}{LxF} & \multicolumn{2}{c|}{2nd order LxF} &\multicolumn{2}{c}{NT}\\
$n$&$L^1$-error&c.r. &$L^1$-error&c.r. &$L^1$-error&c.r. \\
\hline\hline
0&5.99e-01&-&8.55e-02&-&1.33e-01&-\\
1&3.67e-01&0.71&3.25e-02&1.40&4.42e-02&1.59\\
2&2.09e-01&0.82&1.02e-02&1.68&1.51e-02&1.55\\
3&1.12e-01&0.90&2.96e-03&1.78&4.36e-03&1.79\\
4&5.84e-02&0.94&8.22e-04&1.85&1.23e-03&1.82\\
5&2.98e-02&0.97&2.27e-04&1.85&3.42e-04&1.85\\
\hline\hline
\end{tabular} 
\end{table}
\section{\rv{Conclusion}}\label{sec:conclusion}
We considered non-staggered central approaches for systems of nonlocal balance laws. 
Based on the approach by Nessyahu and Tadmor in \cite{nessyahu1990non}, we extended the approaches \cite{GoatinScialanga2016,belkadi2022non} from scalar nonlocal conservation laws to systems with source terms. We prove the weak-$*$ convergence towards weak solutions in the linear case. With additional regularity assumptions we compute the slopes more accurately such that we can show the strong convergence in the nonlinear case to weak entropy solution.
The main advantage of the NT scheme is that it can be easily applied to any system of nonlocal balance laws. Several numerical examples demonstrate this.
Additionally, we explain how to derive these schemes for further nonlocal models, which might involve a nonlinear function of the state variables.

To reduce the numerical dissipation in the future, we aim to investigate an approach by Kurganov and Tadmor \cite{kurganov2000new}, which has been considered for local conservation laws and splits the cells into smooth and non-smooth parts. In addition, we aim to investigate the convergence of the second-order extension of the LxF scheme to systems of nonlocal balance laws introduced in Section \ref{sec:numerical}.

\section*{Funding}
J.~F. is supported by the German Research Foundation (DFG) through SPP 2410 `Hyperbolic Balance Laws in Fluid Mechanics: Complexity, Scales, Randomness' under grant FR 4850/1-1. S.~R. is supported by NBHM, DAE, India (Ref. No. 02011/46/2021 NBHM(R.P.)/R \& D II/14874).
J.~.F. and S.~R. acknowledge support from the Indo-German Science and Technology Centre (IGSTC) through the PECFAR-2024 grant.
\section*{Conflict of interest}
The authors declare that there is no conflict of interest.

\begin{appendix}
    \section{\rv{Positivity preservation for systems of nonlocal conservation and scalar nonlocal balance laws}}\label{app:pos}

Under additional assumptions on the source term the numerical approximations of \eqref{eq:generalsystemsimple} satisfy a strict lower bound $\rho_m$ which implies the positivity of solutions for a sufficiently small time horizon. In particular, we require:
    \begin{assumption}\label{setpos} Additionally to the Assumption \ref{set1}, we assume
        \begin{itemize}
            \item\ \emph{Source term}: if $\rho^k=\rho_m$ then $S_k((\rho^1,\dots,\rho^{k-1},\rho_m,\rho^{k+1},\dots,\rho^N)^T,\bm{R})\geq 0$ and either
            $S_k(\brho,\bm{R})\leq 0$ or $S_k(\brho,\bm{R})=S(\rho^k,\bm{R})$ for all $\brho \in \R^N$ and $\bm{R}\in\R^m$
        \end{itemize}
    \end{assumption}
    The first assumption, is rather standard to ensure the positivity of the solution, see e.g. \cite{bayen2022multilane,friedrich2020nonlocal}.
    The additional assumptions let us control the influence coming from the source term after the half-time step, see \eqref{eq:estimateS} below.
    These assumptions include systems of nonlocal conservation laws and scalar nonlocal balance laws.
    \begin{theorem}\label{thm:pos} Let the Assumption \ref{setpos} hold, then under the CFL condition
\begin{equation}
    \frac{\Delta t L_S}{2}\leq \tau, \quad {\lambda} {L_F}\leq \kappa\quad \text{and}\quad  \kappa +\tau \leq \frac{\sqrt{2}-1}{2},
\end{equation} there exists $n^*\in \N$ such that the approximate solutions of the NT scheme \eqref{eq:system staggered_final} stay bounded from below by $\rho_m\geq 0$.
\end{theorem}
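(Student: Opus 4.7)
The plan is to proceed by induction on $n$, in close analogy with the proof of Theorem \ref{thm:max} for the upper bound, but now tracking the deviation $\rho_j^{k,n}-\rho_m \geq 0$ and exploiting the specific vanishing properties of $F_k$ and $S_k$ at the lower boundary. The base case $n=0$ is immediate from $\brho_0\in L^\infty(\R;[\rho_m,\rho_M)^N)$. As in Theorem \ref{thm:max}, it suffices to establish the bound for the staggered values $\rho_{j+1/2}^{k,n+1}$, since the projection \eqref{eq:system staggered_final} preserves the lower bound by the averaging property of linear interpolants.

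The first step is to show that the half-cell reconstructed averages satisfy $\rho_j^{k,n}\pm \tfrac{\Delta x}{4}s_j^{k,n}\geq \rho_m$. This is a short case analysis on the minmod slope \eqref{eq:slope for X}: when the two one-sided differences carry opposite signs, $s_j^{k,n}=0$; when they share a sign, the reconstructed value is a convex combination of $\rho_j^{k,n}$ and one of its neighbors, hence bounded below by $\rho_m$. Rewriting the predictor in the suggestive form
\[
\tfrac{1}{2}(\rho_j^{k,n}+\rho_{j+1}^{k,n})+\tfrac{\Delta x}{8}(s_j^{k,n}-s_{j+1}^{k,n}) = \tfrac{1}{2}\bigl[(\rho_j^{k,n}+\tfrac{\Delta x}{4}s_j^{k,n}) + (\rho_{j+1}^{k,n}-\tfrac{\Delta x}{4}s_{j+1}^{k,n})\bigr],
\]
we see the ``diffusive'' part of the scheme already lies above $\rho_m$, and only the flux and source contributions can threaten the lower bound.

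The second step exploits the assumptions on $F_k$ and $S_k$ to bound these contributions by the deviation from $\rho_m$. Since $F_k(\rho_m,\bm{R})=0$ for all $\bm{R}$, inserting this intermediate value gives
\[
|F_k(\rho_{j+1}^{k,n+1/2},\bm{R}_{j+1}^{n+1/2})-F_k(\rho_j^{k,n+1/2},\bm{R}_j^{n+1/2})| \leq L_F\bigl(|\rho_{j+1}^{k,n+1/2}-\rho_m|+|\rho_j^{k,n+1/2}-\rho_m|\bigr),
\]
where, crucially, the nonlocal term $\bm{R}$ drops out entirely. For the source, Assumption \ref{setpos} forces $S_k(\brho,\bm{R})=0$ whenever $\rho^k=\rho_m$: in the case $S_k\leq 0$ this follows by combining with $S_k\geq 0$ at $\rho^k=\rho_m$; in the case $S_k=S(\rho^k,\bm{R})$ it follows from $S_k(\brho_m,\bm{R})=0$ in Assumption \ref{set1}. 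The joint Lipschitz bound then yields $|S_k(\brho_j^{n+1/2},\bm{R}_j^{n+1/2})|\leq L_S|\rho_j^{k,n+1/2}-\rho_m|$, modulo terms that are absorbed by monotonicity in the $S_k\leq 0$ branch. The predictor deviation is controlled by $|\rho_j^{k,n+1/2}-\rho_m| \leq (1+\tfrac{\Delta t L_S}{2})|\rho_j^{k,n}-\rho_m|+\tfrac{\Delta t}{2}|\sigma^k(\brho_j^n,\bm{R}_j^n)|$, with $\Delta x|\sigma^k|$ bounded by the neighboring deviations via Lemma \ref{lem:estimates}.

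Combining these estimates and gathering coefficients, the negative contributions to $\rho_{j+1/2}^{k,n+1}-\rho_m$ are all multiples of $|\delta_j^{k,n+1/2}|$ and $|\delta_{j+1}^{k,n+1/2}|$ with combined prefactor controlled by $\lambda L_F$ and $\tfrac{\Delta t L_S}{2}$, while the ``diffusive'' part $\tfrac12(A_j+B_{j+1})\geq 0$ dominates them precisely when $\kappa+\tau\leq \tfrac{\sqrt 2-1}{2}$ (this is the lower-bound analogue of the balance $\lambda L_F(1+\lambda L_F)\leq 1/4$ that appeared in Theorem \ref{thm:max}). The main obstacle I expect is not the flux part, where the cancellation $F_k(\rho_m,\cdot)=0$ gives clean control, but the source part: the predictor $\rho_j^{k,n+1/2}$ may transiently dip below $\rho_m$, and the bookkeeping must simultaneously absorb $|S_k(\brho_j^{n+1/2},\bm{R}_j^{n+1/2})|$ in both the $S_k\leq 0$ and $S_k=S(\rho^k,\bm{R})$ subcases into a single deviation budget; this is where Assumption \ref{setpos} is used essentially, and where care is required to keep the constants sharp enough that the CFL condition $\kappa+\tau\leq (\sqrt 2-1)/2$ suffices. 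Once this is done, iterating the resulting one-step bound $\rho_{j+1/2}^{k,n+1}\geq \rho_m$ up to the first time $n^*$ at which Assumption \ref{set1} might fail (equivalently, at which the upper bound \eqref{eq:maxbound} might be violated) completes the argument.
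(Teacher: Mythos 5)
Your proposal is correct and follows essentially the same route as the paper's proof: induction, reduction to the staggered values, insertion of $F_k(\rho_m,\cdot)=0$ and of the vanishing of $S_k$ at $\rho^k=\rho_m$ so that all flux and source contributions are controlled by the local deviations $\rho_j^{k,n}-\rho_m$, and the balance $(\kappa+\tau)(1+\kappa+\tau)\le 1/4$ encoded in the CFL condition. Your handling of the diffusive part via positivity of the minmod reconstruction is a mild (and clean) variant of the paper's identity $\tfrac12(a+b)-\tfrac14|a-b|=\min\{a,b\}$, and your explicit tracking of $|\rho_j^{k,n+1/2}-\rho_m|$ is if anything more careful than the paper's. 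One small correction: the bound $\Delta x\,|\sigma^k(\brho_j^n,\bm{R}_j^n)|\le L_F\big((\rho_{j+1}^{k,n}-\rho_m)+(\rho_j^{k,n}-\rho_m)\big)$ must come from the same insertion of $F_k(\rho_m,\bm{R}_{j\pm1}^n)=0$ into the flux differences defining the minmod, not from Lemma \ref{lem:estimates}, which only yields an $\mathcal{O}(\Delta x\max_i\|\brho_i^n\|_\infty)$ term that is not controlled by the local deviations and would spoil the lower bound where those deviations are small.
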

\begin{proof}\
  Similar to the proof of the $L^\infty$ bound, we prove the statement by induction and only for the approximations on the staggered grid $\rho_{j+1/2}^{k,n+1}$.
  For $n=0$ the statement is obvious. 
  For simplicity and with some abuse of notation, we denote by $\tilde{\brho}=(\rho^1,\dots,\rho^{k-1},\rho_m,\rho^{k+1},\dots,\rho^N)^T$, the vector which has $\rho_m$ as the $k$-th entry.
  We start with the following observation:
  \begin{align*}
      \rho_j^{k,n}+\Delta t S_k(\brho_j^n,\bm{R}_j^n)\geq \rho_j^{k,n}+\Delta t (S_k(\brho_j^n,\bm{R}_j^n)-S_k(\tilde \brho_j^n,\bm{R}_j^n))\geq \rho_m.
  \end{align*}
 Then, the difference in the flux can be estimated by adding some zeros 
  \begin{align*}
  &| F_k(\rho_{j+1}^{k,n+1/2},\bm{R}_{j+1}^{n+1/2})-F_k(\rho_{j}^{k,n+1/2},\bm{R}_{j}^{n+1/2})|\\
  &=
  | F_k(\rho_{j+1}^{k,n+1/2},\bm{R}_{j+1}^{n+1/2})-F_k(\rho_{j}^{k,n+1/2},\bm{R}_{j}^{n+1/2})-F_k(\rho_m,\bm{R}_{j+1}^{n+1/2})+F_k(\rho_m,\bm{R}_{j}^{n+1/2})|\\
  & \leq {L_F} \left( \rho_{j+1}^{k,n}+\rho_j^{k,n}{-2\rho_m}+\frac{\Dt}{2} \left(|\sigma^k(\brho_j^k,\bm{R}_j^k)|+|\sigma^k(\brho_{j+1}^k,\bm{R}_{j+1}^k)|\right){+
  \frac{\Delta t}{2}\left(
  S_{k}(\brho_{j+1}^{n},\bm{R_{j+1}}^{n})
  + S_{k}(\brho_{j}^{n},\bm{R_{j}}^{n})
  \right)}\right).
  \end{align*}
   Similarly, we can estimate
  \begin{align*}
      \Delta x |\sigma^k(\brho_j^n,\bm{R}_j^n)| &\leq | F_k(\rho_{j+1}^{k,n},\bm{R}_{j+1}^{n})-F_k(\rho_{j}^{k,n},\bm{R}_{j}^{n})|\leq {L_F} (\rho_{j+1}^{k,n}+\rho_j^{k,n}{-2\rho_m}).
  \end{align*}
  Note that we have the same estimate for $ \Delta x |\sigma^k(\brho_{j+1}^n,\bm{R}_{j+1}^n)|\leq  {L_F} (\rho_{j+1}^{k,n}+\rho_j^{k,n}{-2\rho_m})$.
  Furthermore, the additional assumptions on the source allow for the following estimate
  \begin{align}\label{eq:estimateS}
  S_{k}(\brho_{j+1}^{n},\bm{R_{j+1}}^{n})
  + S_{k}(\brho_{j}^{n},\bm{R_{j}}^{n})
  \leq L_S(\rho_{j+1}^{k,n}+\rho_j^{k,n}{-2\rho_m})      
  \end{align}
  by either using $S(\brho_j^n,\bm{R}_j^n)\leq 0$ or adding $S(\rho_m,\bm{R}_j^n)=0$. Hence, we obtain
  \begin{align*}
      | F_k(\rho_{j+1}^{k,n+1/2},\bm{R}_{j+1}^{n+1/2})-F_k(\rho_{j}^{k,n+1/2},\bm{R}_{j}^{n+1/2})|\leq {L_F} \left(1+{L_F}{\lambda}+{\frac{\Delta t L_S}{2}}\right) (\rho_{j+1}^{k,n}+\rho_j^{k,n}{-2\rho_m}).
  \end{align*}
  Now, we can proceed similarly for the source terms at the half time step
  \begin{align*}        &S_k(\brho_{j+1}^{n+1/2},\bm{R}_{j+1}^{n+1/2})+S_k(\brho_{j}^{n+1/2},\bm{R}_{j}^{n+1/2})\\
      &\geq S_k(\brho_{j+1}^{n+1/2},\bm{R}_{j+1}^{n+1/2})+S_k(\brho_{j}^{n+1/2},\bm{R}_{j}^{n+1/2})-S_k(\tilde{\brho}_{j+1}^{n+1/2},\bm{R}_{j+1}^{n+1/2})-S_k(\tilde{\brho}_{j}^{n+1/2},\bm{R}_{j}^{n+1/2})\\
      &\geq -L_S (\rho_{j+1}^{k,n}+\rho_{j}^{k,n}+\frac{\Delta t}{2}(S_{k}(\brho_{j+1}^{n},\bm{R_{j+1}}^{n})
  + S_{k}(\brho_{j}^{n},\bm{R_{j}}^{n}))-2\rho_m)
  \\
  &\geq -L_S \left(1+{L_F}{\lambda}+{\frac{\Delta t L_S}{2}}\right) \left(\rho_{j+1}^{k,n}+\rho_{j}^{k,n}-2\rho_m\right)
  \end{align*}
  For the last inequality we used \eqref{eq:estimateS}.
  This allows us to prove the desired lower bound {
  \begin{align*}
      \rho_{j+1/2}^{k,n+1}&\geq \frac12 (\rho_{j+1}^{k,n}+\rho_j^{k,n})-\frac14|\rho_{j+1}^{k,n}-\rho_j^{k,n}|-{ \left(\frac{\Delta t L_s}{2}+{\lambda}{L_F} \right)\left(1+{L_F}{\lambda}+{\frac{\Delta t L_S}{2}}\right) (\rho_{j+1}^{k,n}+\rho_j^{k,n}-2\rho_m)}\\
      &\geq \frac12 (\rho_{j+1}^{k,n}+\rho_j^{k,n})-\frac14|\rho_{j+1}^{k,n}-\rho_j^{k,n}|- {(\tau + \kappa)}\left(1+\kappa+\tau\right) (\rho_{j+1}^{k,n}+\rho_j^{k,n}-2\rho_m)\\
      &\geq \frac12 (\rho_{j+1}^{k,n}+\rho_j^{k,n})-\frac14|\rho_{j+1}^{k,n}-\rho_j^{k,n}|- \frac14 (\rho_{j+1}^{k,n}+\rho_j^{k,n}-2\rho_m)=\frac12 \min\{\rho_{j+1}^{k,n},\rho_j^{k,n}\}+\frac12 \rho_m\geq \rho_m.
  \end{align*}
  Note that the conditions on $\tau$ and $\kappa$ guarantee $(\kappa+\tau)(1+\kappa+\tau)<1/4$. If the Assumption \ref{setpos}, in particular the Lipschitz continuity of $F_k$ and $S_k$, are not valid anymore, we set $n^*=n$, otherwise we can repeat the procedure recursively.}
\end{proof}
{\begin{remark}
Note that similar to other works, e.g.  \cite{amorim2015numerical,manoj2025convergence}, further assumptions on the flux ensure upper bounds on the solution {independent of the final time $T$}.
In particular, if there exists a $\rho_M$ such that $\rho^k_0(x)\leq \rho_M$ for $x\in\R$, $F_k(\rho_M,\bm{R})=0$ and $S_k((\rho^1,\dots,\rho^{k-1},\rho_M,\rho^{k+1},\dots,\rho^N)^T,\bm{R})\leq 0$ for $\bm{R}\in\R^m$, the analytical solution posses an upper bound $\rho_M$.
Following the proof above we need to either assume $S_k(\brho,\bm{R}) = 0$ or $S_k(\brho,\bm{R})=S(\rho^k,\bm{R})$ for all $\brho \in \R^N$ and $\bm{R}\in\R^m$ to guarantee the upper and lower bound.
We note that while the existence of $\rho_m$ is given in several models, this does not apply to $\rho_M$.
\end{remark}
}

\end{appendix}
\bibliographystyle{abbrv}
\bibliography{reference.bib}

@article{nessyahu1990non,
  title={Non-oscillatory central differencing for hyperbolic conservation laws},
  author={Nessyahu, Haim and Tadmor, Eitan},
  journal={J. Comput. Phys.},
  volume={87},
  number={2},
  pages={408--463},
  year={1990},
  publisher={Elsevier}
}

@article{colombo2012nonlocal,
  title={Nonlocal crowd dynamics models for several populations},
  author={Colombo, Rinaldo M and L{\'e}cureux-Mercier, Magali},
  journal={Acta Math. Sci.},
  volume={32},
  number={1},
  pages={177--196},
  year={2012},
  publisher={Elsevier}
}

@book{godlewski1991hyperbolic,
  title={Hyperbolic systems of conservation laws},
  author={Godlewski, Edwige and Raviart, Pierre-Arnaud},
  number={3-4},
  year={1991},
  publisher={Ellipses}
}

@article{kurganov2000new,
  title={New high-resolution central schemes for nonlinear conservation laws and convection--diffusion equations},
  author={Kurganov, Alexander and Tadmor, Eitan},
  journal={J. Comput. Phys.},
  volume={160},
  number={1},
  pages={241--282},
  year={2000},
  publisher={Elsevier}
}

@article{jiang1998high,
  title={High-resolution nonoscillatory central schemes with nonstaggered grids for hyperbolic conservation laws},
  author={Jiang, Guang-Shan and Levy, Doron and Lin, Chi-Tien and Osher, Stanley and Tadmor, Etan},
  journal={SIAM J. Numer. Anal.},
  volume={35},
  number={6},
  pages={2147--2168},
  year={1998},
  publisher={SIAM}
}

@article{belkadi2022non,
  title={NON OSCILLATORY CENTRAL SCHEMES FOR GENERAL NON-LOCAL TRAFFIC FLOW MODELS},
  author={Belkadi, Said and Atounti, Mohamed},
  journal={Int. J. Appl. Math.},
  volume={35},
  number={4},
  pages={515},
  year={2022},
  publisher={Academic Publications, Ltd.}
}

@article{aggarwal2015nonlocal,
    AUTHOR = {Aggarwal, Aekta and Colombo, Rinaldo M. and Goatin, Paola},
     TITLE = {Nonlocal systems of conservation laws in several space dimensions},
   JOURNAL = {SIAM J. Numer. Anal.},
  FJOURNAL = {SIAM Journal on Numerical Analysis},
    VOLUME = {53},
      YEAR = {2015},
    NUMBER = {2},
     PAGES = {963--983},
      ISSN = {0036-1429},
   MRCLASS = {65M06 (35L65 45K05 65M12)},
  MRNUMBER = {3332915},
MRREVIEWER = {Zhongqiang Zhang},
       DOI = {10.1137/140975255},
       URL = {https://doi.org/10.1137/140975255},
}

@article {BlandinGoatin2016,
    AUTHOR = {Blandin, Sebastien and Goatin, Paola},
     TITLE = {Well-posedness of a conservation law with non-local flux
              arising in traffic flow modeling},
   JOURNAL = {Numer. Math.},
  FJOURNAL = {Numerische Mathematik},
    VOLUME = {132},
      YEAR = {2016},
    NUMBER = {2},
     PAGES = {217--241},
      ISSN = {0029-599X},
   MRCLASS = {35L65 (35B30 35D30 35L45 35R09 76B03)},
  MRNUMBER = {3447130},
MRREVIEWER = {J\"{o}rg M. H\"{a}rterich},
       DOI = {10.1007/s00211-015-0717-6},
       URL = {https://doi.org/10.1007/s00211-015-0717-6},
}

@article{chiarello2018global,
AUTHOR = {Chiarello, Felisia Angela and Goatin, Paola},
TITLE = {Global entropy weak solutions for general non-local traffic flow models with anisotropic kernel},
JOURNAL = {ESAIM Math. Model. Numer. Anal.},
FJOURNAL = {ESAIM. Mathematical Modelling and Numerical Analysis},
    VOLUME = 52,
      YEAR = 2018,
    NUMBER = 1,
     PAGES = {163--180},
      ISSN = {0764-583X},
   MRCLASS = {65M06 (65M12 76A99)},
  MRNUMBER = 3808157,
       DOI = {10.1051/m2an/2017066},
}

@article {GoatinScialanga2016,
  AUTHOR =	 {Goatin, Paola and Scialanga, Sheila},
  TITLE =	 {Well-posedness and finite volume approximations of
                  the {LWR} traffic flow model with non-local
                  velocity},
  JOURNAL =	 {Netw. Heterog. Media},
  FJOURNAL =	 {Networks and Heterogeneous Media},
  VOLUME =	 11,
  YEAR =	 2016,
  NUMBER =	 1,
  PAGES =	 {107--121},
  ISSN =	 {1556-1801},
  MRCLASS =	 {Preliminary Data},
  MRNUMBER =	 3461737,
  DOI =		 {10.3934/nhm.2016.11.107},
  URL =		 {http://dx.doi.org/10.3934/nhm.2016.11.107},
}

@article {KeimerPflug2017,
  AUTHOR =	 {Keimer, Alexander and Pflug, Lukas},
  TITLE =	 {Existence, uniqueness and regularity results on
                  nonlocal balance laws},
  JOURNAL =	 {J. Differ. Equ.},
  FJOURNAL =	 {Journal of Differential Equations},
  VOLUME =	 263,
  YEAR =	 2017,
  NUMBER =	 7,
  PAGES =	 {4023--4069},
  ISSN =	 {0022-0396},
  MRCLASS =	 {35L03 (35B65 35D30 35L65)},
  MRNUMBER =	 3670045,
  URL =		 {https://doi.org/10.1016/j.jde.2017.05.015},
}

@article {chiarello2019multiclass,
    AUTHOR = {Chiarello, Felisia Angela and Goatin, Paola},
     TITLE = {Non-local multi-class traffic flow models},
   JOURNAL = {Netw. Heterog. Media},
  FJOURNAL = {Networks and Heterogeneous Media},
    VOLUME = {14},
      YEAR = {2019},
    NUMBER = {2},
     PAGES = {371--387},
      ISSN = {1556-1801},
   MRCLASS = {35L65 (65M08 90B20)},
  MRNUMBER = {3959349},
       DOI = {10.1214/18-ba1098},
}

@article{friedrich2020onetoone,
  AUTHOR = {Chiarello, Felisia Angela and Friedrich, Jan and Goatin, Paola and G{\"o}ttlich, Simone and Kolb, Oliver},
title={A non-local traffic flow model for 1-to-1 junctions},
volume={31},
DOI={10.1017/S095679251900038X},
number={6},
journal = {European J. Appl. Math.},
fjournal={European Journal of Applied Mathematics},
publisher={Cambridge University Press},
year={2020},
pages={1029--1049},
}

@article{friedrich2018godunov,
   AUTHOR = {Friedrich, Jan and Kolb, Oliver and G\"{o}ttlich, Simone},
     TITLE = {A {G}odunov type scheme for a class of {LWR} traffic flow
              models with non-local flux},
   JOURNAL = {Netw. Heterog. Media},
  FJOURNAL = {Networks and Heterogeneous Media},
    VOLUME = {13},
      YEAR = {2018},
    NUMBER = {4},
     PAGES = {531--547},
      ISSN = {1556-1801},
   MRCLASS = {65M08 (35L45 35L65 90B20)},
  MRNUMBER = {3917881},
MRREVIEWER = {Martin Vohral\'{\i}k},
       DOI = {10.3934/nhm.2018024},
       URL = {https://doi-org.proxy.unimib.it/10.3934/nhm.2018024},
}

@article{friedrich2019maximum,
AUTHOR = {Friedrich, Jan and Kolb, Oliver},
     TITLE = {Maximum principle satisfying {CWENO} schemes for nonlocal
              conservation laws},
   JOURNAL = {SIAM J. Sci. Comput.},
  FJOURNAL = {SIAM Journal on Scientific Computing},
    VOLUME = {41},
      YEAR = {2019},
    NUMBER = {2},
     PAGES = {A973--A988},
      ISSN = {1064-8275},
   MRCLASS = {65M08 (65M12)},
  MRNUMBER = {3934112},
       DOI = {10.1137/18M1175586},
}

@article{friedrich2020micromacro,
AUTHOR = {Chiarello, Felisia A. and Friedrich, Jan and Goatin, Paola and
              G\"{o}ttlich, Simone},
     TITLE = {Micro-macro limit of a nonlocal generalized {A}w-{R}ascle type
              model},
   JOURNAL = {SIAM J. Appl. Math.},
  FJOURNAL = {SIAM Journal on Applied Mathematics},
    VOLUME = {80},
      YEAR = {2020},
    NUMBER = {4},
     PAGES = {1841--1861},
      ISSN = {0036-1399},
   MRCLASS = {35L65 (76A30 90B20)},
  MRNUMBER = {4134039},
       DOI = {10.1137/20M1313337},

}

@article{kurganov2009non,
  title={Non-oscillatory central schemes for traffic flow models with Arrhenius look-ahead dynamics},
  author={Kurganov, Alexander and Polizzi, Anthony},
  journal={Netw. Heterog. Media},
  volume={4},
  number={3},
  pages={431--451},
  year={2009},
  publisher={Networks and Heterogeneous Media}
}

@article{manoj2024positivity,
  title={A positivity preserving second-order scheme for multi-dimensional system of non-local conservation laws},
  author={Manoj, Nikhil and Veerappa Gowda, G. D. and Kenettinkara, Sudarshan Kumar},
  journal={arXiv preprint arXiv:2412.18475},
  year={2024}
}

@article{manoj2025convergence,
  title={Convergence of a second-order central scheme for conservation laws with discontinuous flux},
  author={Manoj, Nikhil and Kenettinkara, Sudarshan Kumar},
  journal={arXiv preprint arXiv:2501.04620},
  year={2025}
}

@article{huang2024asymptotic,
  title={Asymptotic Compatibility of a Class of Numerical Schemes for a Nonlocal Traffic Flow Model},
  author={Huang, Kuang and Du, Qiang},
  journal={SIAM J. Numer. Anal.},
  volume={62},
  number={3},
  pages={1119--1144},
  year={2024},
  publisher={SIAM}
}

@article{ChalonsGoatinVillada2018,
author = {Christophe Chalons and Paola Goatin and Luis M. Villada},
title = {High-Order Numerical Schemes for One-Dimensional Nonlocal Conservation Laws},
journal = {SIAM J. Sci. Comput.},
volume = {40},
number = {1},
pages = {A288--A305},
year = {2018},
doi = {10.1137/16M110825X},
URL = {https://doi.org/10.1137/16M110825X},
}

@article{betancourt2011nonlocal,
    AUTHOR = {Betancourt, F. and B\"urger, R. and Karlsen, K. H. and Tory, E.
              M.},
     TITLE = {On nonlocal conservation laws modelling sedimentation},
   JOURNAL = {Nonlinearity},
  FJOURNAL = {Nonlinearity},
    VOLUME = {24},
      YEAR = {2011},
    NUMBER = {3},
     PAGES = {855--885},
      ISSN = {0951-7715},
   MRCLASS = {76T20 (35F25 35L45 35L65 76N10)},
  MRNUMBER = {2772627},
       URL = {https://doi.org/10.1088/0951-7715/24/3/008},
}

@article{colombo2011control,
    AUTHOR = {Colombo, Rinaldo M. and Herty, Michael and Mercier, Magali},
     TITLE = {Control of the continuity equation with a non local flow},
   JOURNAL = {ESAIM Control Optim. Calc. Var.},
  FJOURNAL = {ESAIM. Control, Optimisation and Calculus of Variations},
    VOLUME = {17},
      YEAR = {2011},
    NUMBER = {2},
     PAGES = {353--379},
      ISSN = {1292-8119},
   MRCLASS = {35L65 (35B30 35B65 35L45 49K20)},
  MRNUMBER = {2801323},
MRREVIEWER = {Ramon G. Plaza},
       URL = {https://doi.org/10.1051/cocv/2010007},
}

@article{amorim2015numerical,
    AUTHOR = {Amorim, Paulo and Colombo, Rinaldo M. and Teixeira, Andreia},
     TITLE = {On the numerical integration of scalar nonlocal conservation
              laws},
   JOURNAL = {ESAIM Math. Model. Numer. Anal.},
  FJOURNAL = {ESAIM. Mathematical Modelling and Numerical Analysis},
    VOLUME = {49},
      YEAR = {2015},
    NUMBER = {1},
     PAGES = {19--37},
      ISSN = {0764-583X},
   MRCLASS = {65M06 (65M12)},
  MRNUMBER = {3342191},
MRREVIEWER = {Prabir K. Daripa},
       URL = {https://doi.org/10.1051/m2an/2014023},
}

@article{gottlich2014modeling,
  title={Modeling, simulation and validation of material flow on conveyor belts},
  author={G{\"o}ttlich, Simone and Hoher, Simon and Schindler, Patrick and Schleper, Veronika and Verl, Alexander},
  journal={Appl. Math. Model.},
  fjournal={Applied mathematical modelling},
  volume={38},
  number={13},
  pages={3295--3313},
  year={2014},
  publisher={Elsevier},
}

@article{colombo2012class,
    AUTHOR = {Colombo, Rinaldo M. and Garavello, Mauro and L\'ecureux-Mercier,
              Magali},
     TITLE = {A class of nonlocal models for pedestrian traffic},
   JOURNAL = {Math. Models Methods Appl. Sci.},
  FJOURNAL = {Mathematical Models and Methods in Applied Sciences},
    VOLUME = {22},
      YEAR = {2012},
    NUMBER = {4},
     PAGES = {1150023},
      ISSN = {0218-2025},
   MRCLASS = {90B20 (35Q91)},
  MRNUMBER = {2902155},
       URL = {https://doi.org/10.1142/S0218202511500230},
}

@incollection{eymard2000finitevolume,
    AUTHOR = {Eymard, Robert and Gallou\"{e}t, Thierry and Herbin, Rapha\`ele},
     TITLE = {Finite volume methods},
 BOOKTITLE = {Handbook of numerical analysis, {V}ol. {VII}},
    SERIES = {Handb. Numer. Anal., VII},
     PAGES = {713--1020},
 PUBLISHER = {North-Holland, Amsterdam},
      YEAR = {2000},
   MRCLASS = {65M60 (65N30)},
  MRNUMBER = {1804748},
MRREVIEWER = {Do Young Kwak},
       DOI = {10.1086/phos.67.4.188705},
       URL = {https://doi.org/10.1086/phos.67.4.188705}, 
}

@book{goettlich2010supplychains,
    AUTHOR = {D'Apice, Ciro and G\"{o}ttlich, Simone and Herty, Michael and
              Piccoli, Benedetto},
     TITLE = {Modeling, simulation, and optimization of supply chains: A continuous approach },
 PUBLISHER = {Society for Industrial and Applied Mathematics (SIAM),
              Philadelphia, PA},
      YEAR = {2010},
     PAGES = {x+206},
      ISBN = {978-0-898717-00-6},
   MRCLASS = {90-01 (35L65 65K10 90B06 90C90 93C65 94C99)},
  MRNUMBER = {2665143},
MRREVIEWER = {Giuseppe Maria Coclite},
       DOI = {10.1137/1.9780898717600},
       URL = {https://doi.org/10.1137/1.9780898717600}, 
}

@article{keimer2018multi,
    AUTHOR = {Keimer, Alexander and Pflug, Lukas and Spinola, Michele},
     TITLE = {Existence, uniqueness and regularity of multi-dimensional
              nonlocal balance laws with damping},
   JOURNAL = {J. Math. Anal. Appl.},
  FJOURNAL = {Journal of Mathematical Analysis and Applications},
    VOLUME = {466},
      YEAR = {2018},
    NUMBER = {1},
     PAGES = {18--55},
      ISSN = {0022-247X},
   MRCLASS = {35L60 (35B65)},
  MRNUMBER = {3818104},
MRREVIEWER = {Stephen D. Pankavich},
       DOI = {10.1016/j.jmaa.2018.05.013},
       URL = {https://doi.org/10.1016/j.jmaa.2018.05.013}, 
}

@article{friedrich2023numerical,
  title={Numerical schemes for a class of nonlocal conservation laws: a general approach},
  author={Friedrich, Jan and Sudha, Sanjibanee and Rathan, Samala},
  journal={Netw. Heterog. Media},
  Volume = {18},
      YEAR = {2023},
    NUMBER = {3},
     PAGES = {1335-1354},
}

@article{amadori2012integro,
  title={An integro-differential conservation law arising in a model of granular flow},
  author={Amadori, Debora and Shen, Wen},
  journal={J. Hyperbolic Differ. Equ.},
  volume={9},
  number={01},
  pages={105--131},
  year={2012},
  publisher={World Scientific}
}

@article{colombo2019role,
	author = {Colombo, M. and Crippa, G and Graff, M and Spinolo, L V.},
	title = {On the role of numerical viscosity in the study of the local limit of nonlocal conservation laws},
	DOI= "10.1051/m2an/2021073",
	url= "https://doi.org/10.1051/m2an/2021073",
	journal = {ESAIM Math. Model. Numer. Anal.},
	year = 2021,
	volume = 55,
	number = 6,
	pages = "2705-2723",
}

@article{belkadi2024unstaggered,
  title={UNSTAGGERED CENTRAL SCHEMES FOR ONE-DIMENSIONAL NONLOCAL CONSERVATION LAWS},
  author={Belkadi, Said and Atounti, Mohamed},
  journal={Int. J. Appl. Math.},
  volume={37},
  number={2},
  pages={155--164},
  year={2024}
}

@article{godunov1959,
    AUTHOR = {Godunov, S. K.},
     TITLE = {A difference method for numerical calculation of discontinuous
              solutions of the equations of hydrodynamics},
   JOURNAL = {Mat. Sb. (N.S.)},
    VOLUME = {47 (89)},
      YEAR = {1959},
     PAGES = {271--306},
   MRCLASS = {65.00 (76.00)},
  MRNUMBER = {0119433},
MRREVIEWER = {M. Holt},
}

@article{friedrich2020nonlocal,
      title={Nonlocal approaches for multilane traffic models}, 
      author={Jan Friedrich and Simone G\"ottlich and Elena Rossi},
      year={2021},   
      VOLUME = {19},
    NUMBER = {8},
     PAGES = {2291--2317},
      fjournal={Communications in Mathematical Sciences},
      journal={Commun. Math. Sci.},
}

@article{goatin2016speed,
  title={Speed limit and ramp meter control for traffic flow networks},
  author={Goatin, Paola and G{\"o}ttlich, Simone and Kolb, Oliver},
  journal={Eng. Optim.},
  fjournal={Engineering optimization},
  volume={48},
  number={7},
  pages={1121--1144},
  year={2016},
  publisher={Taylor \& Francis}
}

@article{gd2023convergence,
  title={Convergence of a second-order scheme for non-local conservation laws},
  author={Veerappa Gowda, G. D. and Kenettinkara, Sudarshan Kumar and Manoj, Nikhil},
  journal={ESAIM Math. Model. Numer. Anal.},
  volume={57},
  number={6},
  pages={3439--3481},
  year={2023},
  publisher={EDP Sciences}
}

@article{bayen2022multilane,
author = {Bayen, Alexandre and Friedrich, Jan and Keimer, Alexander and Pflug, Lukas and Veeravalli, Tanya},
title = {Modeling Multilane Traffic with Moving Obstacles by Nonlocal Balance Laws},
journal = {SIAM J. Appl. Dyn. Syst.},
volume = {21},
number = {2},
pages = {1495-1538},
year = {2022},
doi = {10.1137/20M1366654},

URL = { 
        https://doi.org/10.1137/20M1366654
    
},
eprint = { 
        https://doi.org/10.1137/20M1366654
    
}
,
    abstract = { We consider a system of nonlocal balance laws where each balance law is coupled with the remaining balance laws both by a nonlocal velocity function that takes into account the averaged density of all other equations and by a right-hand “semilinear” term. We demonstrate existence and uniqueness of weak solutions for a small time horizon and a maximum principle for additional assumptions on the input data. This maximum principle ensures the applicability of the considered system of nonlocal balance laws to real-world problems. In the case of traffic flow, we show how the nonlocal impact and the coupling via the “semilinear” term can model multilane traffic flow with lane changing. We also demonstrate the applicability of the model to an on-ramp scenario in which the cars have to move from the on-ramp to the adjacent lane within a finite spatial domain. Moreover, we demonstrate how the problem of having obstacles can be modeled with these equations, introducing an additional ODE for the obstacle's dynamic. Several numerical results are presented and their reasonability is discussed. }
}

@incollection{KEIMER2023,
title = {Nonlocal balance laws – an overview over recent results},
booktitle= {Handbook of Numerical Analysis},
publisher = {Elsevier},
year = {2023},
issn = {1570-8659},
doi = {https://doi.org/10.1016/bs.hna.2022.11.001},
url = {https://www.sciencedirect.com/science/article/pii/S1570865922000217},
author = {Alexander Keimer and Lukas Pflug},
keywords = {Nonlocal conservation laws, Singular limit problem, Traffic flow modeling, Population balance equations, Fixed-point methods, Discontinuous nonlocal conservation laws}
}

@article{bellman1983vibrational,
  title={Vibrational control of systems with Arrhenius dynamics},
  author={Bellman, Richard and Bentsman, Joseph and Meerkov, Semyon M},
  journal={J. Math. Anal. Appl.},
  volume={91},
  number={1},
  pages={152--191},
  year={1983},
  publisher={Elsevier}
}

@article{sopasakis2006stochastic,
  title={Stochastic modeling and simulation of traffic flow: asymmetric single exclusion process with Arrhenius look-ahead dynamics},
  author={Sopasakis, Alexandros and Katsoulakis, Markos A},
  journal={SIAM J. Appl. Math.},
  volume={66},
  number={3},
  pages={921--944},
  year={2006},
  publisher={SIAM}
}

@article{aggarwal2024well,
  title={Well-posedness and error estimates for coupled systems of nonlocal conservation laws},
  author={Aggarwal, Aekta and Holden, Helge and Vaidya, Ganesh},
  journal={IMA J. Numer. Anal.},
  volume = {44},
  number = {6},
  pages={3354-3392},
  year={2024},
  publisher={Oxford University Press}
}

@article{bhatnagar2021well,
  title={Well-posedness and critical thresholds in a nonlocal Euler system with relaxation},
  author={Bhatnagar, Manas and Liu, Hailiang},
  journal={Discrete Contin. Dyn. Syst.},
  year={2021}
}

@inproceedings{friedrich2022conservation,
  title={Conservation laws with nonlocality in density and velocity and their applicability in traffic flow modelling},
  author={Friedrich, Jan and G{\"o}ttlich, Simone and Keimer, Alexander and Pflug, Lukas},
  booktitle={XVI International Conference on Hyperbolic Problems: Theory, Numerics, Applications},
  pages={347--357},
  year={2022},
  organization={Springer}
}

@article{chiarello2024singular,
  title={On the singular limit problem in nonlocal balance laws: Applications to nonlocal lane-changing traffic flow models},
  author={Chiarello, Felisia Angela and Keimer, Alexander},
  journal={J. Math. Anal. Appl.},
  volume={537},
  number={2},
  pages={128358},
  year={2024},
  publisher={Elsevier}
}

@article{belkadi2025unstaggered,
  title={Unstaggered Central Schemes for Arrhenius-type Look-ahead Traffic Flow Model},
  author={Belkadi, Said and Atounti, Mohamed and El Allali, Zakaria},
  journal={IAENG Int. J. Appl. Math.},
  volume={55},
  number={2},
  pages={391--398},
  year={2025},
  publisher={International Association of Engineers}
}

@article{belkadi2024class,
  title={A class of Central Unstaggered Schemes for Nonlocal Conservation Laws: Applications to Traffic Flow Models},
  author={Belkadi, Said and Atounti, Mohamed},
  journal={Boletim da Sociedade Paranaense de Matematica},
  volume={42},
  pages={1--12},
  year={2024}
}

@article{goatin2024well,
  title={Well-posedness of nonlocal macroscopic models of multi-population pedestrian flows for domain shape optimization},
  author={Goatin, Paola and Rossi, Elena},
  journal={preprint},
  year={2024}
}

\end{document}